\declaretheorem{lemma}
\declaretheoremstyle[qed=$\square$]{definitionwithend}
\declaretheorem[style=definitionwithend]{definition}
\DeclareMathOperator{\conv}{conv}
\DeclareMathOperator{\cone}{cone}
\newcommand{\M}{\mathbb{M}}
\newcommand{\R}{\mathbb{R}}
\newcommand{\Z}{\mathbb{Z}}
\newcommand{\epr}{\hfill\hbox{\hskip 4pt \vrule width 5pt height 6pt depth 1.5pt}\vspace{0.0cm}\par}
\newtheorem{theorem}{Theorem}[section]
\newtheorem*{proof-main theorem}{Proof of Theorem~\ref{main result}}
\newtheorem*{proof-lemma}{Proof of Lemma~\ref{lemma:double-star}}
\newcounter{claim_nb}[theorem]
\newtheorem{claim'}[claim_nb]{Claim}
\newtheorem*{claim*}{Claim}
\theoremstyle{definition}
\newcommand{\floor}[1]{\lfloor #1 \rfloor}
\newcommand{\ceil}[1]{\lceil #1 \rceil}
\newcommand{\scg}{$S$-CG cut~}
\newcommand{\scgx}{$S$-CG cut}
\newcommand{\scgs}{$S$-CG cuts~}
\newcommand{\scgc}{$S$-CG closure~}
\newcommand{\supp}{\text{supp}}
\DeclareMathOperator{\rec}{rec}
\DeclareMathOperator{\lin}{lin}
\DeclareMathOperator{\proj}{proj}
\newcommand {\beqn}{\begin{equation}}\newcommand {\eeqn}{\end{equation}}
\newcommand {\beqan}{\begin{eqnarray}}\newcommand {\eeqan}{\end{eqnarray}}
\newcommand {\beqa}{\begin{eqnarray*}}\newcommand {\eeqa}{\end{eqnarray*}}
\newcommand{\skipit}  [1] {}
\begin{document}
	\title{On a generalization of the Chv\'atal-Gomory closure}
	\author{
Sanjeeb Dash\thanks{IBM Research, Yorktown Heights, NY 10598, USA, \url{sanjeebd@us.ibm.com}}\and Oktay G\"{u}nl\"{u}k\thanks{School of ORIE, Cornell University, Ithaca, NY 14850, USA, \url{ong5@cornell.edu}} 	
\and
Dabeen Lee\thanks{Discrete Mathematics Group, Institute for Basic Science (IBS), Daejeon 34126, Republic of Korea, \url{dabeenl@ibs.re.kr}}	}	
	\maketitle
	
\begin{abstract}
Many practical integer programming problems involve variables with one or two-sided bounds. 
Dunkel and Schulz (2012) considered a strengthened version of Chv\'{a}tal-Gomory (CG) inequalities that use 0-1 bounds on variables, and showed that the set of points in a rational polytope that satisfy all these strengthened inequalities is a polytope. 
Recently, we generalized this result by considering strengthened CG inequalities
that use all variable bounds. In this paper, we generalize further by considering not just variable bounds, but general linear constraints on variables. We show that all points in a rational polyhedron that satisfy such strengthened CG inequalities form a rational polyhedron. We also extend this polyhedrality result to mixed-integer sets defined by linear constraints. 
\end{abstract}

\section{Introduction}

Gomory~\cite{G1958} discovered the first finitely convergent cutting plane algorithm -- based on Gomory fractional cuts -- for solving integer linear programs.
Chv\'atal~\cite{C1973} later studied a related cut-generation scheme,
where the generated cuts are called \emph{Chv\'atal-Gomory (CG) cuts}, and are essentially equivalent to Gomory fractional cuts.
CG cuts are prevalent in the discrete optimization literature. Many fundamental classes of facet-defining inequalities for combinatorial optimization problems are CG cuts, e.g., odd set inequalities for the matching problem~\cite{matching,C1973} and odd circuit inequalities for the stable set problem~\cite{GS1986}. CG cuts are computationally effective for solving integer linear programs in practice~\cite{first-closure,projected-cg}, and CG cuts for nonlinear integer programs have also been studied~\cite{conic-cg}. Some important classes of inequalities used for binary polynomial optimization are CG cuts~\cite{polynomial-cg}.

An important property of CG cuts proved by Schrijver~\cite{S1980} %
is that although there are infinitely many CG cuts for a given rational polyhedron, the list of \emph{nonredundant} CG cuts is always finite. Equivalently, the \emph{Chv\'atal-Gomory (CG) closure} of a rational polyhedron, defined as the set of points satisfying all possible CG cuts, is again a rational polyhedron. A number of recent papers prove the polyhedrality of the CG closure for more general closed convex sets such as irrational polytopes~\cite{DS2013}, rational ellipsoids~\cite{DV2010}, strictly convex sets~\cite{DDV2011}, and finally arbitrary compact convex sets~\cite{DDV2014,BP2014} (unlike bounded convex sets, a polyhedron with an irrational ray may have infinitely many nonredundant CG cuts).

In this paper, we take a different direction of generalizing Schrijver's polyhedrality result. We consider a strengthening of CG cuts for a rational polyhedron that we explain below. Given a rational polyhedron $P\subseteq\R^n$ and a valid inequality $\alpha x \leq \beta$ with integer coefficients $\alpha\in\mathbb{Z}^n$, the CG cut derived from $\alpha x \leq \beta$ is defined as $\alpha x \leq \floor{\beta}$. Note that 
\[ \floor{\beta} \geq \max\{\alpha x :\ x \in \Z^n,\ \alpha x \leq \beta \}\]
and the inequality becomes an equality if the coefficients of $\alpha$ are coprime integers. The gap between $\floor{\beta}$ and $\max\{\alpha x :x\in P\cap \Z^n \}$ can be large, and $\alpha x \leq \beta'$ can be a valid inequality for $P \cap \Z^n$ for some $\beta'$ that is much smaller than $\beta$.
If we are given {\em a priori} information that $P\cap \Z^n$ is contained in some $S\subseteq \Z^n$, then assuming $S$ has a point satisfying $\alpha x \leq \beta$, the inequality $\alpha x \leq \floor{\beta}_{S,\alpha}$,
where
\[ \floor{\beta}_{S,\alpha} = \max\{\alpha x :\ x \in S,\ \alpha x \leq \beta\}\]
is certainly valid for $P\cap \Z^n$ and is a strengthening of $\alpha x \leq \floor{\beta}$. We call the inequality $\alpha x \leq \floor{\beta}_{S,\alpha}$ an \emph{$S$-Chv\'{a}tal-Gomory ($S$-CG) cut} for $P$. If $S$ does not contain a point satisfying $\alpha x \leq \beta$, then $P \cap S$ is empty, in which case, we say that $\mathbf{0}x \leq -1$ is an \scg for $P$. For general $S$, the hyperplane $\left\{x\in\mathbb{R}^n:\alpha x=\beta\right\}$ is moved until it hits a point in $S$ (see Figure~\ref{fig:scgcut}); the resulting hyperplane is given by $\left\{x\in\mathbb{R}^n:\alpha x=\floor{\beta}_{S,\alpha}\right\}$. 
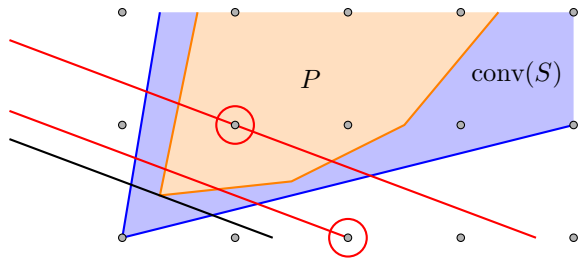
\begin{figure}[h!]
	\begin{center}
		\begin{tikzpicture}
		[main node/.style={circle,fill=black!30,draw,minimum size=0.1em, inner sep=1pt}]
		
		\fill[blue, nearly transparent] (4.5,4) -- (4,1) -- (10,2.5) -- (10,4) -- (9,4) -- (7.75,2.5) -- (6.25,1.75) -- (4.5,1.5625) -- (5,4) -- (4.5,4);
		\draw[blue,thick] (4.5,4) -- (4,1) -- (10,2.5);
		\draw[orange,thick]  (5,4) -- (4.5,1.5625) -- (6.25,1.75) -- (7.75,2.5) -- (9,4);
		\fill[orange,nearly transparent]  (5,4) -- (4.5,1.5625) -- (6.25,1.75) -- (7.75,2.5) -- (9,4) -- (5,4);
		\draw[black,thick] (2.5,2.3125) -- (6,1);
		\draw[red,thick] (7,1) -- (2.5,2.6875);
		\draw[red,thick] (2.5,3.63) -- (5.5,2.5) -- (9.5,1);
		\draw[red,thick] (7,1) circle (0.25cm);
		\draw[red,thick] (5.5,2.5) circle (0.25cm);
		
		\node[main node] (4) at (5.5,1) {};
		\node[main node] (5) at (4,1) {};
		\node[main node] (6) at (7,1) {};
		\node[main node] (7) at (8.5,1) {};
		\node[main node] (8) at (10,1) {};
		
		\node[main node] (12) at (4,2.5) {};
		\node[main node] (13) at (5.5,2.5) {};
		\node[main node] (14) at (7,2.5) {};
		\node[-,label={$P$}] (14) at (6.5,2.75) {};
		\node[-,label={$\conv(S)$}] (14) at (9.25,2.75) {};
		\node[main node] (15) at (8.5,2.5) {};
		\node[main node] (16) at (10,2.5) {};
		
		\node[main node] (20) at (4,4) {};
		\node[main node] (21) at (5.5,4) {};
		\node[main node] (22) at (7,4) {};
		\node[main node] (23) at (8.5,4) {};
		\node[main node] (24) at (10,4) {};
		
		\end{tikzpicture}
		\caption{Comparison of the CG cut and the $S$-CG cut from an inequality}\label{fig:scgcut}
	\end{center}
\end{figure}
In a similar manner, we define
\[ \ceil{\beta}_{S,\alpha} = \min\{\alpha x :\ x \in S,\ \alpha x \geq \beta\},\]
assuming that $S$ has a point satisfying $\alpha x \geq \beta$.
Then we say that $\alpha x \geq \ceil{\beta}_{S,\alpha}$ is the \scg obtained from $\alpha x \geq \beta$. This way of strengthening CG cuts was considered earlier in~\cite{DS2012,P2011}. Based on this generalization of CG cuts, we can also extend the notion of closure. We define the \emph{$S$-Chv\'atal-Gomory ($S$-CG) closure} of a polyhedron to be the set of all points that satisfy all \scgs for the polyhedron. For the case $S = \Z^n$, the \scgs are essentially equivalent to the CG cuts, and the \scgc coincides with the CG closure. 

A natural question is whether the \scgc of a rational polyhedron is also a rational polyhedron. Dunkel and Schulz~\cite{DS2012} proved that when $S = \{0,1\}^n$ and $P$ is a rational polytope contained in $[0,1]^n$, then the $S$-CG closure of $P$ is also a rational polytope. In~\cite{Dash19}, we observed that a modification of their argument works for any finite $S$. To be precise, we proved that when $S$ is finite, the $S$-CG closure of a rational polyhedron $P$ is a rational polyhedron, regardless of whether or not $P\subseteq \conv(S)$. Furthermore, using a novel proof technique, we showed in \cite{Dash19} that when $S$ is the set of integral points that satisfy an arbitrary collection of variable bounds and $P$ is a rational polyhedron contained in $\conv(S)$, the $S$-CG closure of $P$ is a rational polyhedron. This result covers the cases $S=\{0,1\}^n$, $S=\Z^n$, and $S=\Z_+^n$. 

In this paper, we consider the case when $S$ is the set of integer points in an arbitrary rational polyhedron. The following theorem is our main result:
\begin{theorem}\label{main result}
Let $S=R\cap \mathbb{Z}^n$ for some rational polyhedron $R$ and $P\subseteq\conv(S)$ be a rational polyhedron. Then the $S$-CG closure of $P$ is a rational polyhedron.
\end{theorem}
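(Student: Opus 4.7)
The plan is to lift into a higher-dimensional space where the defining inequalities of $R$ become variable bounds on fresh slack coordinates, establish an exact correspondence of $S$-CG closures, and then adapt the finiteness argument of Dash19 to the resulting form.

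Write $R = \{x : Ax \ge b\}$ with integer $A, b$, and introduce slacks $s = Ax - b$. Lift $P$ to $\tilde P = \{(x,s) \in \R^{n+m} : x \in P,\ s = Ax - b\}$, lying in the affine subspace $\mathcal L = \{(x,s) : Ax - s = b\}$, and define $\tilde S = \{(x,s) \in \Z^{n+m} : Ax - s = b,\ s \ge 0\}$. The projection $(x,s) \mapsto x$ is a bijection $\tilde S \to S$, and $\tilde P \subseteq \conv(\tilde S)$. Substituting $s = Ax - b$ in the definition of the floor operator yields, for any $(\alpha,\gamma) \in \Z^{n+m}$, the identity
\[
\lfloor \beta \rfloor_{\tilde S,\, (\alpha,\gamma)} \;=\; \lfloor \beta + \gamma b \rfloor_{S,\, \alpha + \gamma A} - \gamma b,
\]
so on $\mathcal L$ the $\tilde S$-CG cut coincides with the $S$-CG cut for $P$ derived from $(\alpha + \gamma A)x \le \beta + \gamma b$, and conversely every $S$-CG cut for $P$ arises this way with $\gamma = 0$. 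Therefore the $S$-CG closure of $P$ is the $x$-image of the intersection of the $\tilde S$-CG closure of $\tilde P$ with $\mathcal L$, and it suffices to prove polyhedrality in the lifted setting.

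In the lifted problem $\tilde S$ is cut out by integer-affine equalities together with the variable bound $s \ge 0$, which is a convenient intermediate form: the equalities merely express $s$ as an integer-affine function of $x$, while the only inequality is axis-aligned. My plan for the main step is to adapt the proof in Dash19, which handles pure variable-bound $S$ by showing that only finitely many $S$-CG cuts are nonredundant, via a Carath\'eodory-type decomposition of the coefficient vector combined with a finiteness argument exploiting the coordinate product structure. I would carry this decomposition out for $(\alpha,\gamma) \in \Z^{n+m}$, using the variable bound on $s$ directly and reducing $(\alpha,\gamma)$ to canonical representatives modulo the integer lattice generated by the rows of $[A\ -I]$, thereby bounding the number of nonredundant cuts.

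The main obstacle is precisely this adaptation. The Dash19 argument exploits the coordinatewise product structure of $S$ in an essential way, and once the slacks are coupled to the original variables through the integer-affine constraint $Ax - s = b$, this structure is lost. I expect the principal technical work to be a generalization of the coefficient-bounding lemma of Dash19 to the lifted regime where a rational integer-affine subspace is imposed in addition to variable bounds, likely using Meyer's theorem on $\conv(\tilde S)$ to control cuts in unbounded directions and a refined Carath\'eodory-style analysis to handle the interaction between the equalities and the slack inequalities.
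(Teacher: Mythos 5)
Your lifting is formally correct but, on inspection, makes no progress on the problem. The bijection $x \mapsto (x, Ax-b)$ is an affine isomorphism between $P$ and $\tilde P$ and between $S$ and $\tilde S$, and your floor identity shows that the $\tilde S$-CG cut from $(\alpha,\gamma)$ restricted to $\mathcal L$ is \emph{exactly} the $S$-CG cut from $(\alpha + \gamma A)x \le \beta + \gamma b$. Since every integral $\alpha' \in \Z^n$ can be realized with $\gamma=0$, the map $(\alpha,\gamma) \mapsto \alpha+\gamma A$ shows the family of $\tilde S$-CG cuts for $\tilde P$ (restricted to $\mathcal L$) is literally the family of $S$-CG cuts for $P$, and $\tilde P_{\tilde S}$ is just the isomorphic image of $P_S$. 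Proving the lifted closure is polyhedral is \emph{the same statement} as proving the original is. Nothing has been reduced; you have restated the problem in one extra set of coordinates.

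The real difficulty then remains entirely unaddressed. You correctly identify that Dash19's variable-bound argument relies on the coordinatewise product structure of $S$, and that this is destroyed once the slacks are coupled to $x$ via $Ax - s = b$. But the proposed fix --- ``reducing $(\alpha,\gamma)$ to canonical representatives modulo the lattice generated by the rows of $[A\ -I]$'' --- is precisely the vacuous step above: the canonical representative with $\gamma=0$ is the original coefficient vector, and you are back where you started. Nothing in your sketch replaces the paper's genuine technical core: (i) the decomposition by unimodular maps and sign patterns into the cases where $R$ is a cylinder (Theorem~\ref{thm:cylinder}), pointed (Section~\ref{sec:pointed}), or has nontrivial lineality (Section~\ref{sec:main}); and (ii) the hardest piece, the covering/packing analysis showing that nondominated $S$-CG cuts have bounded intercepts on the extreme rays of $\conv(S)$, which uses the tilting ratio (Definition~\ref{DE:maindef}) and simultaneous Diophantine approximation (Theorem~\ref{apx}, Lemmas~\ref{LE:basics} and~\ref{LE:pbasics}). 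Extending that argument from $\R^n_+$ (the case in Dash19) to a general pointed cone --- which can have more than $n$ extreme rays, possibly linearly dependent --- is exactly what the paper does and your proposal does not. Meyer's theorem, which you invoke, only tells you $\conv(\tilde S)$ is a rational polyhedron; it gives no handle on bounding the infinite family of cuts, which is the entire point.

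In short: the correspondence is sound but tautological, and the ``adaptation of Dash19'' is a placeholder for the whole proof.
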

We next give a high-level outline of some of the technical details of the proof. In Section~\ref{sec:cylinder-Chvatal}, we start by proving that the result holds when $R$ is a rational cylinder. The key idea there is to use a unimodular mapping of $R$ to a set of the form $T\times \R^{l}$ where $l \leq n$ and $T \subseteq \R^{n-l}$ is a polytope. The case $R = T \times \R^{l}$ is already covered in~\cite{Dash19}. In Section~\ref{sec:pointed}, we then consider the case when $R$ is a pointed polyhedron. The hardest case in~\cite{Dash19} is the case when $S = \Z^n_+$ and $P$ is a packing or covering polyhedron contained in $\R^n_+$. Similarly, the case when $R$ is a pointed polyhedron and $P$ behaves like a packing or covering polyhedron with respect to $R$ is the hardest case in this paper. The main technical difference between $R$ being a pointed polyhedron and $R$ being $\conv(\mathbb{Z}_+^n)=\mathbb{R}_+^n$ is that $R$ can have more than $n$ extreme rays and, in particular, the extreme rays can be linearly dependent. Nevertheless, this case can be dealt with by generalizing the argument in~\cite{Dash19} to our setting. We essentially prove that given a valid inequality for $P$ (and the associated hyperplane) that yields a nonredundant \scgx, the points at which the hyperplane intersects the rays of the recession cone of $R$ are bounded. %
In Section~\ref{sec:main}, we consider the case when $R$ is a polyhedron with a nontrivial lineality space, completing the proof of Theorem~\ref{main result}. 

In Section~\ref{sec:mixed}, we extend our main result to the mixed-integer setting. Bonami et al.~\cite{projected-cg} defined \emph{projected Chv\'atal-Gomory cuts} as a generalization of CG cuts for mixed-integer linear programs. We generalize projected CG cuts and define $S$-CG cuts for polyhedral mixed-integer sets by defining $S$ to be an appropriate mixed-integer set and defining the $S$-CG closure accordingly. Given a rational polyhedron $P \subseteq \R^{n}\times\R^l$ and the requirement that the first $n$ variables are integral, we only consider valid inequalities for $P$ that have nonzero components only for the integer variables, and define $S$ to be $R\cap (\Z^n\times\R^l)$ where $R$ is a rational polyhedron in $\R^{n}\times\R^l$. We conclude in Section~\ref{sec:concl} with some remarks on possible generalizations of our results.

\subsection{Related work}

\emph{Split cuts}~\cite{disjunctive,split-closure} form an important class of {\em intersection cuts}, introduced by Balas \cite{int3},  %
and are obtained from \emph{splits}. CG cuts are a special case of split cuts, as a CG cut is obtained from a \emph{split disjunction} that has one of its sides empty. Recently, intersection cuts from general \emph{lattice-free sets} and \emph{$S$-free sets} have attracted enormous attention from the optimization community~\cite{int0,int1,int2,int4,int51,int52,int6,int7,int8,int10,int11,int111,int13,int14,int141,int15,int16,BLTW2017}. Just as CG cuts form a special class of split cuts, $S$-CG cuts can be interpreted as intersection cuts from \emph{$S$-free splits}, and equivalently, \emph{wide splits} -- a name coined by Bonami et al.~\cite{BLTW2017}.

Several families of lattice-free sets and the associated cuts and closures have been introduced, and the corresponding polyhedrality theorems for rational polyhedra were proved~\cite{max-facet-width,split-closure,split-closure-2,split-closure-3,averkov,int10,int51,int2}. Hence, it is natural to ask if standard techniques from these papers as well as the papers on the CG closure~\cite{S1980,DS2013,DV2010,DDV2011,DDV2014,BP2014} can be applied for proving the polyhedrality of the $S$-CG closure. However, the earlier results rely directly or indirectly on the assumption that certain lattice-free sets have bounded \emph{max-facet-width} (see~\cite{int2}), which is defined as follows. The \emph{width} of a convex set $L$ along a vector $\pi$ is defined to be the number $w(L,\pi):=\max\{\pi x:x\in L\}-\min\{\pi x:x\in L\}$. Given a rational polyhedron $L$ whose facets are defined by inequalities $\pi^ix\geq \pi_0^i$ for $i=1,\ldots,k$ where $\pi^i$ have coprime integer coefficients, the max-facet-width of $L$ is defined as $\max\{w(L,\pi^i):i=1,\ldots,k\}$.
Recall that we obtain the $S$-CG cut $\alpha x\leq \floor{\beta}_{S,\alpha}$ from a valid inequality $\alpha x\leq \beta$. Here, the gap $\beta-\floor{\beta}_{S,\alpha}$ can grow as a function of the components in $\alpha$ when $S\neq\mathbb{Z}^n$, implying in turn that $S$-free splits do not necessarily have bounded max-facet-width.

There are other closure operations related to our work. The $S$-CG cuts for the case $S=\{0,1\}^n$ are valid for the 0-1 knapsack set $\{x \in \{0,1\}^n : \alpha x \leq \beta\}$; valid inequalities for such knapsack sets were used to solve practical problem instances in Crowder et al.~\cite{CJP83}, and an associated closure operation was defined by Fischetti and Lodi~\cite{FS10}. Fukasawa and Goycoolea~\cite{fukasawa} studied valid inequalities for bounded and unbounded knapsack sets of the form $\{x \in \mathbb{Z}^n : \ell\leq x\leq u,\ \alpha x \leq \beta\}$ where $\ell\in (\mathbb{R}\cup\{-\infty\})^n$ and $u\in (\mathbb{R}\cup\{+\infty\})^n$, for which $S$-CG cuts with $S=\{x \in \mathbb{Z}^n : \ell\leq x\leq u\}$ are valid. Bodur et al.~\cite{bodur} introduced the notion of \emph{aggregation closure} which is defined as the set of points satisfying valid inequalities for all knapsack sets $\{x \in \Z_+^n: \alpha x \leq \beta\}$ where $\alpha x \leq \beta$ is valid for $P$ and $\alpha \leq 0$ or $\alpha \geq 0$. Pashkovich et al. \cite{PPP19} showed that the aggregation closure  is polyhedral for packing and covering polyhedra.
For packing polyhedra, Del Pia et al.~\cite{dlz} independently proved the same result.

\subsection{Formal definition of the $S$-CG closure}\label{sec:prelim}

Given a rational polyhedron $P = \{x\in \R^n : Ax  \leq b\}$ where  $A\in\Z^{m\times n}$ and $b\in\Z^m$, we define $\Pi_P$ as the set of all coefficient vectors that define valid, supporting inequalities for $P$ with integral left-hand-side coefficients:
\begin{equation}\label{eq:pip}
\Pi_P = \left\{ (\lambda A, \lambda b) \in \Z^n\times\R:\;\lambda\in\mathbb{R}_+^m,\;{\lambda b}= \max\{\lambda Ax : x \in P\}\right\}.
\end{equation}
Hence, for $(\alpha,\beta)\in \Pi_P$, $\alpha x\leq \beta$ is an inequality that is supporting and valid for $P$.
Finally, for $\Omega\subseteq\Pi_P$, we define $P_{S,\Omega}$ as $\bigcap_{(\alpha,\beta)\in \Omega}\left\{x\in\mathbb{R}^n:\;\alpha x\leq \floor{\beta}_{S,\alpha}\right\}$. Then the \scgc of $P$ can be formally defined as $P_{S,\Pi_P}$. Throughout the paper, we denote by $P_S$ the \scgc of $P$:
\begin{equation}\label{eq:scg}
P_{S}:= P_{S,\Pi_P}=\bigcap\limits_{(\alpha,\beta)\in \Pi_P}\left\{x\in\mathbb{R}^n:\;\alpha x\leq \floor{\beta}_{S,\alpha}\right\}.	
\end{equation}
Notice that if $\Gamma\subseteq \Omega \subseteq \Pi_P$, then $P_{S} \subseteq P_{S,\Omega}\subseteq P_{S,\Gamma}$. Also, if $S\subseteq T$ for some $T\subseteq\mathbb{Z}^n$, then $P_S\subseteq P_{T}$. Likewise, for any $\Omega\subseteq\Pi_P$, we have $P_{S,\Omega}\subseteq P_{T,\Omega}$ if $S\subseteq T$.

Throughout the paper, we assume that $P$ and $S$ are nonempty. If $P$ is empty, Farkas' lemma (see~\cite[Theorem 3.4]{ipbook}) implies that ${\bf 0}x\leq -1$ can be derived from $Ax\leq b$, in which case, $P_S$ is trivially empty. If $S$ is empty, then the assumption that $P\subseteq \conv(S)$ enforces $P$ empty, and again, $P_S$ is empty.

As $S$ is nonempty and $S$ is the set of integer points contained in a rational polyhedron $R$, it follows from Meyer's theorem~\cite{meyer} that $\conv(S)$ is also a rational polyhedron and the recession cones of $\conv(S)$ and $R$ coincide (see also~\cite[Theorem 4.30]{ipbook}). 

We assume basic knowledge of polyhedral theory in relation to integer programming. For basic terminologies and definitions, we refer the reader to a comprehensive textbook in the area~\cite{ipbook}.	
\section{Integer points in a general cylinder}\label{sec:cylinder-Chvatal}

We say that a rational polyhedron $R$ is a {\it rational cylinder} if the recession cone and lineality space of $R$ are the same. In this section, we consider the case when $S=R\cap\mathbb{Z}^n$ for some rational cylinder $R$. Note that a rational affine subspace is a rational cylinder but the converse is not always true. For example, the convex hull of $F\times \mathbb{Z}^l$ for any finite $F\subseteq\mathbb{Z}^{n-l}$ is a rational cylinder. For this special case, we already have the following polyhedrality result:
\begin{theorem}[{\bf \cite[Theorem 3.4]{Dash19}}]\label{Thm:ortho-cylinder}
	Let $S=F\times \mathbb{Z}^{l}$ for some finite $F\subseteq\mathbb{Z}^{n-l}$ where $0<l\leq n$. If $P\subseteq\mathbb{R}^{n}$ is a rational polyhedron then $P_S$ is a rational polyhedron.
\end{theorem}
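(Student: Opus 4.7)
The plan is to show that the set of nonredundant $S$-CG cuts for $P$ is finite, thereby establishing polyhedrality. I would partition $\Pi_P$ according to the support of the $\Z^l$-coefficient: cuts with $\alpha_2 = 0$ (horizontal) and cuts with $\alpha_2 \neq 0$ (vertical), and handle the two classes by different means.

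For horizontal cuts, the $S$-CG rounding collapses to $\lfloor \beta\rfloor_{S, \alpha} = \max\{\alpha_1 f : f \in F,\, \alpha_1 f \leq \beta\}$ because the $\Z^l$-component plays no role. These are exactly the $F$-CG cuts for the projection $\pi_1(P) \subseteq \R^{n-l}$. Since $F$ is finite, the $F$-CG closure of $\pi_1(P)$ is a rational polyhedron by the finite-$S$ result proved earlier in~\cite{Dash19}, and lifting back contributes a rational polyhedral factor to $P_S$.

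For vertical cuts I would adapt Schrijver's multiplier-decomposition argument. Writing $\alpha = \lambda A$, $\beta = \lambda b$ for some $\lambda \in \R^m_+$, decompose $\lambda = \mu + \lambda'$ with $\mu \in \Z^m_+$ and $\lambda' \in [0, 1)^m$, and set $\alpha' := \lambda' A \in \Z^n$ and $\beta' := \lambda' b$. The aim is to show that the $S$-CG cut from $\lambda$ is implied by $Ax \leq b$ together with the $S$-CG cut from $\lambda'$. For $x \in P$ satisfying the $\lambda'$-cut, adding $\mu A x \leq \mu b$ gives $\alpha x \leq \mu b + \lfloor \beta'\rfloor_{S, \alpha'}$, so it suffices to verify $\mu b + \lfloor \beta'\rfloor_{S, \alpha'} \leq \lfloor \beta\rfloor_{S, \alpha}$. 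To establish this, let $y' = (f', z') \in S$ attain $\lfloor \beta'\rfloor_{S, \alpha'}$ and construct a witness $y = (f', z' + \Delta) \in S$ with $\Delta \in \Z^l$ such that $\alpha y \in [\mu b + \alpha' y',\, \beta]$. The $\Z^l$-translational freedom in $S$ is essential here: the values $\alpha_2 \Delta$ range over $d\Z$ where $d = \gcd(\alpha_2)$, and the target interval for $\alpha_2 \Delta$ has length $\beta' - \alpha' y' \in [0, \gcd(\alpha'_2))$. A case analysis on $d$, $d' = \gcd(\alpha'_2)$, and the residue $(\mu b - \mu A y') \bmod d$ completes the argument, with a fallback enumeration over $f \in F$ when the primary translation does not yield a valid $\Delta$. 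Once the reduction to $\lambda' \in [0, 1)^m$ is in hand, standard finiteness applies: $\alpha'$ takes finitely many integer values over the bounded range, and for each fixed $\alpha'$ the function $\beta' \mapsto \lfloor \beta'\rfloor_{S, \alpha'}$ is a step function with finitely many distinct values on the bounded fiber $\{\lambda' \in [0, 1]^m : \lambda' A = \alpha'\}$.

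The main obstacle is the key reduction inequality $\mu b + \lfloor \beta'\rfloor_{S, \alpha'} \leq \lfloor \beta\rfloor_{S, \alpha}$, which is significantly more delicate than its classical CG analogue. In Schrijver's proof the identity $\lfloor \mu b + \beta'\rfloor = \mu b + \lfloor \beta'\rfloor$ trivially yields the reduction, but here the $S$-specific rounding does not respect additive integer shifts cleanly: the $S$-witness $y'$ for $\alpha'$ need not satisfy $\mu A y' \leq \mu b$ because $y'$ lies in $S$ but not necessarily in $P$, so the residue $\mu A y' - \mu b$ can be arbitrary. A further subtlety is that integer multipliers $\mu \in \Z^m_+$ can by themselves produce nontrivial $S$-CG strengthenings from the $F$-restriction, which the standard $\lambda' \in [0, 1)^m$ bounding does not capture; these must be folded into the finite family by showing that useful integer multipliers are themselves equivalent to a bounded family modulo integer shifts aligned with the recession directions of $\conv(S) = \conv(F) \times \R^l$. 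It is precisely the cylinder structure of $\conv(S)$, together with the finiteness of $F$, that makes this bounded enumeration possible and constitutes the technical heart of the proof.
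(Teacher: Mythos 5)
The theorem you are proving is cited from \cite{Dash19} and is not reproved in the present paper, so I can only assess your proposal on its own merits and against the paper's structural hints. Your treatment of the ``horizontal'' cuts (those with $\alpha_2 = \mathbf{0}$) is sound in outline: via the Projection Lemma (which this paper also imports from \cite{Dash19} as Lemma~\ref{LE:projection}), these reduce to $F$-CG cuts for $\proj_x(P)$, and the finite-$S$ polyhedrality result handles them. That piece is fine.

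The ``vertical'' case is where the proposal breaks. You port Schrijver's decomposition $\lambda = \mu + \lambda'$ with $\mu \in \Z^m_+$, $\lambda'\in[0,1)^m$, and you need the key inequality $\mu b + \lfloor\beta'\rfloor_{S,\alpha'} \le \lfloor\beta\rfloor_{S,\alpha}$ (with $\alpha = \mu A + \alpha'$, $\beta = \mu b + \beta'$). In the classical case this is the identity $\lfloor \mu b + \beta'\rfloor = \mu b + \lfloor\beta'\rfloor$, which holds because the ordinary floor commutes with adding any integer. But $\lfloor\cdot\rfloor_{S,\alpha}$ only commutes with adding multiples of $\gcd(\alpha_2)$, and $\gcd(\alpha_2)$ changes when $\alpha$ is replaced by $\alpha'$. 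Concretely, if $\mu b \not\equiv 0 \pmod{\gcd(\alpha_2)}$, the shift by $\mu b$ crosses into the next $\gcd(\alpha_2)$-block and the inequality fails. For instance, with $n=2$, $l=1$, $F=\{0\}$, $S=\{0\}\times\Z$, and the row $2x_2 \le 7$ in $Ax\le b$, taking $\lambda = e_i$ on that row gives $\alpha = (0,2)$, $\beta = 7$, so $\lfloor\beta\rfloor_{S,\alpha} = 6$, whereas $\mu = \lambda$, $\lambda'=\mathbf{0}$ yields $\mu b + \lfloor\beta'\rfloor_{S,\alpha'} = 7$; the claimed implication ``$Ax\le b$ plus the $\lambda'$-cut implies the $\lambda$-cut'' is $2x_2\le 7 \Rightarrow 2x_2\le 6$, which is false. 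You flag this difficulty yourself, but the proposed ``case analysis on $d$, $d'$, and the residue'' with a ``fallback enumeration over $f\in F$'' does not rescue the argument: the residue $(\mu b - \mu A y')\bmod d$ really can land outside the target window for every $f\in F$, and nothing in the proposal prevents that. This is precisely the phenomenon the paper points to in its Related Work discussion --- the gap $\beta - \lfloor\beta\rfloor_{S,\alpha}$ can grow with $\alpha$ (it scales with $\gcd(\alpha_2)$), so the associated $S$-free splits have unbounded max-facet-width and Schrijver-style bounding of multipliers to $[0,1)^m$ is not available. Any valid proof must replace this step with an argument that does not rely on pulling integer multipliers out of the $S$-specific rounding.

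Also note a more minor issue: for your reduction to be meaningful one needs the witness argument to produce a point of $S$, yet your constructed $y' = (f', z')$ achieving $\lfloor\beta'\rfloor_{S,\alpha'}$ is only in $S$, not in $P$, so $\mu A y'$ has no sign control. You acknowledge this, but it compounds the residue problem above: the left endpoint $\mu b - \mu A y'$ of your target interval for $\alpha_2\Delta$ is an uncontrolled integer, so the case analysis on residues is over an unbounded set and cannot be folded into a finite family just from finiteness of $F$.
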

We will extend this result to general rational cylinders by taking appropriate  \emph{unimodular transformations}. Remember that a unimodular transformation is a mapping $\tau:\mathbb{R}^n\rightarrow \mathbb{R}^n$ that maps $x\in\mathbb{R}^n$ to $Ux+v\in\mathbb{R}^n$ for some unimodular matrix $U\in\mathbb{Z}^{n\times n}$ and some integral vector $v\in\mathbb{Z}^n$. 
Note that the inverse mapping $\tau^{-1}(x)=U^{-1}x-U^{-1}v$ is also a unimodular transformation. For $X\subseteq \mathbb{R}^n$, we denote by $\tau(X)$ the image of $X$ under $\tau$. For $\Pi\subseteq\Pi_P$, although $\Pi$ is not in the space of $\mathbb{R}^n$, we abuse our notation and define $\tau(\Pi)$ as $\{(\alpha U^{-1},\beta+\alpha U^{-1}v):(\alpha,\beta)\in \Pi\}\subseteq\Pi_{\tau(P)}$.
\begin{lemma}[\bf Unimodular mapping lemma~\cite{Dash19}]\label{LE:unimodular-closure}
	Let $S\subseteq\mathbb{Z}^n$ and $P\subseteq\conv(S)$ be a rational polyhedron. Then $\tau(P)\subseteq \conv(\tau(S))$, and for any $\Pi\subseteq\Pi_P$, $\tau(P_{S,\Pi})=\tau(P)_{\tau(S),\tau(\Pi)}$. In particular, $\tau(P_S)=\tau(P)_{\tau(S)}$.
\end{lemma}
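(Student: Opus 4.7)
The plan is to reduce everything to the change-of-variables formula induced by $\tau$. Since $U$ is unimodular, $U^{-1}\in\Z^{n\times n}$, so the inverse $\tau^{-1}(y)=U^{-1}y - U^{-1}v$ is itself a unimodular transformation; both $\tau$ and $\tau^{-1}$ map $\Z^n$ bijectively onto $\Z^n$, and the affine map $\tau$ commutes with taking convex hulls. From $\tau(\conv(S))=\conv(\tau(S))$ one immediately gets $\tau(P)\subseteq\conv(\tau(S))$, settling the first claim.

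For the main identity, I would first verify that the rule $(\alpha,\beta)\mapsto(\alpha U^{-1},\beta+\alpha U^{-1}v)$ actually sends $\Pi_P$ into $\Pi_{\tau(P)}$. This is a definitional check: substituting $x=\tau^{-1}(y)$ into $\alpha x\le\beta$ gives $(\alpha U^{-1})y\le\beta+\alpha U^{-1}v$, and since $U^{-1}$ and $v$ are integer, both the new coefficient vector and the new right-hand side remain integral whenever $\alpha\in\Z^n$ and $\beta\in\Z$. Validity and the supporting property transfer because $\tau$ is a bijection carrying $P$ onto $\tau(P)$. Applying the same recipe to $\tau^{-1}$ shows this map is a bijection $\Pi_P\to\Pi_{\tau(P)}$; in particular $\tau(\Pi_P)=\Pi_{\tau(P)}$.

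The heart of the proof is matching each $S$-CG cut to a single $\tau(S)$-CG cut. Fix $(\alpha,\beta)\in\Pi$ and set $(\alpha',\beta'):=(\alpha U^{-1},\beta+\alpha U^{-1}v)$. Using that $\tau$ is a bijection between $\{x\in S:\alpha x\le\beta\}$ and $\{y\in\tau(S):\alpha' y\le\beta'\}$, and that $\alpha'\tau(x)=\alpha x+\alpha U^{-1}v$ for every $x\in\R^n$, a direct computation gives
\[
\floor{\beta'}_{\tau(S),\alpha'} \;=\; \floor{\beta}_{S,\alpha} + \alpha U^{-1}v.
\]
Pulling the resulting cut $\alpha' y\le\floor{\beta'}_{\tau(S),\alpha'}$ back through $y=\tau(x)$ yields exactly $\alpha x\le\floor{\beta}_{S,\alpha}$, so the two corresponding cuts define the same half-space in their respective coordinate systems. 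Intersecting over all $(\alpha,\beta)\in\Pi$ then gives $\tau(P_{S,\Pi})=\tau(P)_{\tau(S),\tau(\Pi)}$, and specializing $\Pi=\Pi_P$ together with $\tau(\Pi_P)=\Pi_{\tau(P)}$ yields $\tau(P_S)=\tau(P)_{\tau(S)}$.

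The only place requiring any genuine care, rather than bookkeeping, is the affine shift $\alpha U^{-1}v$ appearing in $\beta'=\beta+\alpha U^{-1}v$ and the need to confirm that this shift cancels cleanly when one undoes the change of variables inside the rounding operator. The safest way to avoid sign errors is to work with the single substitution $y=\tau(x)$ throughout and to reinterpret the maximization defining $\floor{\cdot}_{\tau(S),\alpha'}$ via this substitution, rather than transforming coefficient vectors and right-hand sides by separate rules.
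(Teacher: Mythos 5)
The paper cites this lemma from the earlier work~\cite{Dash19} and does not reproduce a proof, so there is no in-paper argument to compare against. Your proof is correct and is the natural change-of-variables argument one would expect: you verify that $\tau$ carries $\Pi_P$ bijectively onto $\Pi_{\tau(P)}$ (using that $U^{-1}$ and $v$ are integral to preserve integrality of coefficients), establish the key identity $\floor{\beta'}_{\tau(S),\alpha'}=\floor{\beta}_{S,\alpha}+\alpha U^{-1}v$ by pushing the maximization defining the rounding operator through the bijection $\tau\colon S\to\tau(S)$, and then observe that the half-space $\{x:\alpha x\le\floor{\beta}_{S,\alpha}\}$ maps exactly onto $\{y:\alpha'y\le\floor{\beta'}_{\tau(S),\alpha'}\}$, so the intersections match after noting that bijections commute with arbitrary intersections. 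The closing remark about handling the affine shift via a single substitution $y=\tau(x)$ is good hygiene, and it is precisely the step where the computation could otherwise go wrong. No gaps.
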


Essentially, we will argue that the set of integer points in a rational cylinder can be mapped to a set of the form $F\times\mathbb{R}^l$ where $F\subseteq \mathbb{Z}^{n-l}$ is finite by a unimodular transformation.
\begin{theorem}\label{thm:cylinder}
	Let $S=R\cap\mathbb{Z}^n$ for some rational cylinder $R$. If $P\subseteq \conv(S)$ is a rational polyhedron, then $P_S$ is a rational polyhedron.
\end{theorem}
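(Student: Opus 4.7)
The plan is to reduce Theorem~\ref{thm:cylinder} to Theorem~\ref{Thm:ortho-cylinder} by means of a suitable unimodular transformation, and then transport the polyhedrality back via Lemma~\ref{LE:unimodular-closure}. Let $L = \rec(R) = \lin(R)$, which is a rational linear subspace of $\mathbb{R}^n$; denote its dimension by $l$. Since $R$ is a cylinder, one can write $R = Q + L$ where $Q$ is a polytope (e.g., $Q = R \cap L^{\perp}$, whose recession cone $L \cap L^{\perp} = \{0\}$ is trivial). When $l = 0$ we have $R = Q$ bounded, so $S = R \cap \mathbb{Z}^n$ is finite and the conclusion follows from the finite-$S$ polyhedrality result of~\cite{Dash19} mentioned in the introduction. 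Henceforth assume $l \ge 1$.

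Next I would construct a unimodular transformation $\tau$ that sends $L$ to the coordinate subspace $\{0\}^{n-l} \times \mathbb{R}^l$. Because $L$ is a rational subspace, the lattice $L \cap \mathbb{Z}^n$ has rank $l$, and by a standard Hermite-normal-form argument one can choose a basis of this lattice and extend it to a basis of $\mathbb{Z}^n$; arranging these basis vectors as the columns of an integer matrix $U$ gives $U \in \mathbb{Z}^{n \times n}$ unimodular with $U(\{0\}^{n-l} \times \mathbb{R}^l) = L$. Setting $\tau(x) = U^{-1}x$ then yields a unimodular transformation with $\tau(L) = \{0\}^{n-l} \times \mathbb{R}^l$. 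Applying $\tau$ to $R = Q + L$ gives $\tau(R) = \tau(Q) + (\{0\}^{n-l} \times \mathbb{R}^l)$; denoting by $T \subseteq \mathbb{R}^{n-l}$ the projection of the polytope $\tau(Q)$ onto the first $n-l$ coordinates, we obtain $\tau(R) = T \times \mathbb{R}^l$ with $T$ a rational polytope.

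Since $\tau$ is unimodular it maps $\mathbb{Z}^n$ bijectively onto itself, so
\[
\tau(S) = \tau(R) \cap \mathbb{Z}^n = (T \times \mathbb{R}^l) \cap (\mathbb{Z}^{n-l} \times \mathbb{Z}^l) = F \times \mathbb{Z}^l,
\]
where $F := T \cap \mathbb{Z}^{n-l}$ is a finite set because $T$ is a polytope. Then $\tau(P)$ is a rational polyhedron, and Theorem~\ref{Thm:ortho-cylinder} applied with $\tau(S) = F \times \mathbb{Z}^l$ shows that $\tau(P)_{\tau(S)}$ is a rational polyhedron. Finally, by Lemma~\ref{LE:unimodular-closure}, $\tau(P_S) = \tau(P)_{\tau(S)}$, and since $\tau^{-1}$ is itself a unimodular (in particular affine and bijective) map, $P_S = \tau^{-1}(\tau(P)_{\tau(S)})$ is a rational polyhedron, completing the proof.

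The only nontrivial step is the construction of $\tau$: one must verify that rationality of $L$ suffices to produce a lattice basis of $L \cap \mathbb{Z}^n$ that extends to a $\mathbb{Z}$-basis of $\mathbb{Z}^n$, which is a classical lattice-theoretic fact. Everything else is bookkeeping once the transformation $\tau$ is in hand.
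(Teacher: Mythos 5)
Your proof is correct and follows essentially the same strategy as the paper: unimodularly map the lineality space to $\{0\}^{n-l}\times\mathbb{R}^{l}$, identify $\tau(S)$ with $F\times\mathbb{Z}^{l}$ for a finite $F\subseteq\mathbb{Z}^{n-l}$, and conclude via Theorem~\ref{Thm:ortho-cylinder} together with Lemma~\ref{LE:unimodular-closure}. The only cosmetic differences are that the paper decomposes $\conv(S)$ (using Meyer's theorem to exhibit integral generators) rather than decomposing $R$ itself as $(R\cap L^{\perp})+L$, and that you treat the $l=0$ case explicitly via the finite-$S$ result, a case the paper leaves implicit.
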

\begin{proof}
	Since $S=R\cap\mathbb{Z}^n$ and $R$ is a rational cylinder, we have $\conv(S)\cap\mathbb{Z}^n=S$ and thus $\conv(S)$ itself is a rational cylinder. Then there exist some integer vectors $v^1,\ldots,v^{g}$ such that $\conv(S)=\conv\left\{v^1,\ldots,v^{g}\right\}+\mathcal{L}$ where $\mathcal{L}$ is the lineality space of $\conv(S)$. Since $\mathcal{L}$ is a linear subspace in $\mathbb{R}^n$ defined by rational data, there exists a unimodular transformation $\tau$ mapping $\mathcal{L}$ to $\{{\bf 0}\}\times\mathbb{R}^{l}$ where $0\leq l\leq n$ is the dimension of $\mathcal{L}$. Then 
	\begin{equation}\label{eq:sec2}
	\tau(\conv(S))=\tau\left(\conv\left\{v^1,\ldots,v^{g}\right\}\right)+\{{\bf 0}\}\times\mathbb{R}^{l}.
	\end{equation}
	 Note that the right-hand side of~\eqref{eq:sec2} equals $\conv\left\{\tau(v^1),\ldots,\tau(v^g)\right\}+\{{\bf 0}\}\times\mathbb{R}^{l}$ and can be written in the form of $\conv(F)\times\mathbb{R}^l$ for some finite $F\subseteq\Z^{n-l}$. As $\tau(\conv(S))=\conv(\tau(S))$ in the left-hand side of~\eqref{eq:sec2}, it follows that $\conv(\tau(S))=\conv(F)\times\mathbb{Z}^l$, which implies that $\tau(S)=F\times\mathbb{Z}^{l}$. Then, by Theorem~\ref{Thm:ortho-cylinder}, $\tau(P)_{\tau(S)}$ is a rational polyhedron, so it follows from Lemma~\ref{LE:unimodular-closure} that $P_S$ is a rational polyhedron, as required. 
	 \ifx\flagJournal\true \qed \fi
\end{proof}	
\section{Integer points in a pointed polyhedron}\label{sec:pointed}

\newcommand {\pc}{{P^{\uparrow}}}
\newcommand {\pp}{{P^{\downarrow}}}

In this section, we consider the case when
\[
S=R\cap\mathbb{Z}^n\quad \text{where }R\text{ is a rational pointed polyhedron}.
\]
Then $\conv(S)\cap \mathbb{Z}^n=S$ and $\conv(S)$ is also a rational pointed polyhedron. We will show that the \scgc of a rational polyhedron $P\subseteq\conv(S)$ is again a rational polyhedron. To simplify the proof, we will reduce this setting to a more restricted setting with additional assumptions on $S$ and $P$, and we will see that these assumptions make the structure of $S$ and that of $P$ easier to deal with. The first part of Section~\ref{sec:pointed} explains the reduction, and Sections~\ref{sec:cov} and~\ref{sec:pack} consider the narrower case of $S$ and $P$ obtained after the reduction.

The first assumption we make is on the structure of $S$. As $\conv(S)$ is a rational polyhedron, $\conv(S)$ can be expressed as the Minkowski sum of the convex hull of integral extreme points and the conic hull of integral extreme rays. Hence, for some integers $g,h\geq 0$, there exist integer vectors $v^1,\ldots,v^{g},r^1,\ldots,r^{h}\in\mathbb{Z}^n$ such that
$\conv(S)$ can be rewritten as
\begin{equation}\label{eq:S_pointed}
\conv(S)=\conv\left\{v^1,\ldots,v^{g}\right\}+\cone\left\{r^1,\ldots,r^{h}\right\}.
\end{equation}
Since $\conv(S)$ is pointed, $\cone\left\{r^1,\ldots,r^{h}\right\}$ has to be pointed as well. Given that $\conv(S)$ has the form of~\eqref{eq:S_pointed}, we assume the following:
\begin{equation}\label{eq:S_pointed_assumption}
\cone\left\{r^1,\ldots,r^{h}\right\}\subseteq \left\{{\bf 0}\right\}\times\mathbb{R}^{n_2},\ \conv(S)\subseteq \cone\left\{e^1,\ldots,e^{n_1},r^1,\ldots,r^{h}\right\}
\end{equation}
where $n_2$ is the dimension of $\rec\left(\conv(S)\right)=\cone\left\{r^1,\ldots,r^{h}\right\}$, $n_1=n-n_2$, and $e^1,\ldots,e^{n_1}$ are unit vectors in $\mathbb{R}^{n_1}\times\{\mathbf{0}\}$. For ease of notation, we use the following notation throughout the paper:
$$\mathcal{C}= \cone\left\{e^1,\ldots,e^{n_1},r^1,\ldots,r^{h}\right\}.$$
Basically, we take a full-dimensional pointed cone containing $\conv(S)$. The assumption~\eqref{eq:S_pointed_assumption} can be justified by taking a unimodular transformation that maps a general $S$ to a set satisfying~\eqref{eq:S_pointed_assumption}.
\begin{figure}[h!]
	\begin{center}
		\begin{tikzpicture}
		[main node/.style={circle,fill=black!30,draw,minimum size=0.1em, inner sep=1pt}]
		
	\fill[blue, nearly transparent] (8,3) -- (7,2) -- (5,1) -- (5,2) -- (6,3)  -- (8,3);
\draw[blue,thick] (8,3) -- (7,2) -- (5,1) -- (5,2) -- (6,3);
		
		\fill[red, nearly transparent] (12,3) -- (12,2) -- (13,1) -- (14,2) -- (14,3) -- (12,3);
		\draw[red,thick] (12,3) -- (12,2) -- (13,1) -- (14,2) -- (14,3);
		
		\draw[black,thick,->] (9,2.5) -- (10,2.5);

		\node[main node,label=below:{$(-5,0)$}] (4) at (5,1) {};
		\node[main node] (5) at (4,1) {};
		\node[main node] (6) at (6,1) {};
		\node[main node] (7) at (7,1) {};
		\node[main node] (8) at (8,1) {};
		
		\node[main node] (12) at (4,2) {};
		\node[main node] (13) at (5,2) {};
		\node[main node] (14) at (6,2) {};
		\node[main node] (15) at (7,2) {};
		\node[main node] (16) at (8,2) {};
		
		\node[main node] (20) at (4,3) {};
		\node[main node] (21) at (5,3) {};
		\node[main node] (22) at (6,3) {};
		\node[main node] (23) at (7,3) {};
		\node[main node] (24) at (8,3) {};

		\node[main node] (4) at (12,1) {};
		\node[main node] (5) at (11,1) {};
		\node[main node,label=below:{$(1,0)$}] (6) at (13,1) {};
		\node[main node] (7) at (14,1) {};
		\node[main node] (8) at (15,1) {};
		
		\node[main node] (12) at (11,2) {};
		\node[main node] (13) at (12,2) {};
		\node[main node] (14) at (13,2) {};
		\node[main node] (15) at (14,2) {};
		\node[main node] (16) at (15,2) {};
		
		\node[main node] (20) at (11,3) {};
		\node[main node] (21) at (12,3) {};
		\node[main node] (22) at (13,3) {};
		\node[main node] (23) at (14,3) {};
		\node[main node] (24) at (15,3) {};
		
		\end{tikzpicture}
		\caption{Obtaining a perpendicular recession cone in $\R^2$}\label{fig:pointed}
	\end{center}
\end{figure}
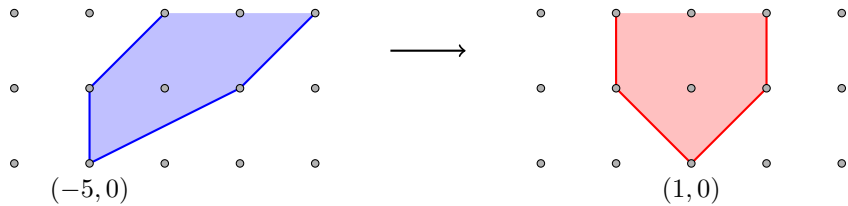
 For example, if $\conv(S)$ is given as the left polyhedron of Figure~\ref{fig:pointed}, $\conv(S)$ does not satisfy~\eqref{eq:S_pointed_assumption} since its ray $(1,1)$ is not contained in $\{0\}\times\mathbb{R}$. Nevertheless, it can be mapped to the polyhedron on the right, by $\tau:(x_1,x_2)\to (6+x_1-x_2,x_2)$, which satisfies~\eqref{eq:S_pointed_assumption}. The following lemma formalizes this observation.
\begin{lemma}\label{reduction1}
Let $S=R\cap\mathbb{Z}^n$ for some rational pointed polyhedron $R$. Then there is a unimodular transformation $\tau$ so that $T:=\tau(S)$ has the property that $\conv(T)\cap\mathbb{Z}^n=T$ and $\conv(T)$ is of the form~\eqref{eq:S_pointed} satisfying~\eqref{eq:S_pointed_assumption}.
\end{lemma}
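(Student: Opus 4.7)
My approach is to build $\tau$ as a composition $\tau=\tau_1\circ\tau_0$ of a unimodular linear map $\tau_0$ and an integer translation $\tau_1$. The role of $\tau_0$ is to align the linear span of $\rec(\conv(S))$ with the coordinate subspace $\{\mathbf{0}\}\times\R^{n_2}$, while $\tau_1$ slides the (finitely many) vertices of $\conv(\tau_0(S))$ so that $\conv(\tau(S))$ ends up inside the cone $\mathcal{C}$. Since $\tau$ is unimodular, the property $\conv(T)\cap\Z^n=T$ for $T:=\tau(S)$ is automatic, so only the two inclusions in~\eqref{eq:S_pointed_assumption} need to be verified.

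For $\tau_0$, I would start from a Meyer-type decomposition $\conv(S)=\conv\{v^1,\ldots,v^g\}+\cone\{r^1,\ldots,r^h\}$ with integer vectors, and set $L:=\spann\{r^1,\ldots,r^h\}$, a rational subspace of dimension $n_2$. Using the standard lattice-theoretic fact that a basis of a primitive rational sublattice of $\Z^n$ extends to a basis of $\Z^n$ (equivalently, Hermite Normal Form), pick $b^1,\ldots,b^{n_2}$ spanning $L\cap\Z^n$ together with $a^1,\ldots,a^{n_1}$ completing them to a $\Z^n$-basis; the unimodular linear map $\tau_0$ sending $a^i\mapsto e^i$ and $b^j\mapsto e^{n_1+j}$ then satisfies $\tau_0(L)=\{\mathbf{0}\}\times\R^{n_2}$, giving $\cone\{\tau_0(r^k)\}\subseteq\{\mathbf{0}\}\times\R^{n_2}$ as required by the first inclusion of~\eqref{eq:S_pointed_assumption}.

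For $\tau_1(x)=x-(z,w)$ with $z\in\Z^{n_1}$ and $w\in\Z^{n_2}$, I need $\tau(\conv(S))\subseteq\mathcal{C}=\R_+^{n_1}\times K$, where $K:=\cone\{\pi_2(\tau_0(r^k))\}$ and $\pi_1,\pi_2$ denote the projections onto the first $n_1$ and last $n_2$ coordinates. Translation does not affect the rays, which already lie in $\{\mathbf{0}\}\times K\subseteq\mathcal{C}$, so only the finitely many vertices need to be placed inside $\mathcal{C}$. The first coordinate block is immediate: pick any integer $z$ with $z\le \min_i\pi_1(\tau_0(v^i))$ componentwise. For the second block, the main task is to exhibit an integer point in $\intt(K)$; I would take $c:=\sum_k\pi_2(\tau_0(r^k))\in\Z^{n_2}$, which lies in $\intt(K)$ because the generators of $K$ span $\R^{n_2}$ (as $\tau_0(L)=\{\mathbf{0}\}\times\R^{n_2}$), so $c+\varepsilon u$ is a strictly positive combination of the generators for every $u$ in some neighborhood of the origin. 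Setting $w:=-Mc$ for a sufficiently large integer $M$ then gives $\pi_2(\tau_0(v^i))+Mc=M\bigl(c+\pi_2(\tau_0(v^i))/M\bigr)\in K$ for every $i$ by the standard interior-point scaling argument.

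\textbf{Main obstacle.} The construction of $\tau_0$ is routine linear algebra over $\Z$. The delicate step is $\tau_1$: the shift must be an \emph{integer} vector and must push every vertex into $\mathcal{C}$ simultaneously. This hinges on producing an integer interior point of the full-dimensional rational cone $K$ and on the observation that a single large multiple of such a point dominates every vertex in the second coordinate block.
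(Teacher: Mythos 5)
Your proposal follows essentially the same route as the paper: first a unimodular linear map aligning $\spann(\rec(\conv(S)))$ with $\{\mathbf{0}\}\times\R^{n_2}$, then an integer translation by a large multiple of an interior point of the cone to push the finitely many (integral) vertices into $\mathcal{C}$. The paper shifts by $M(\sum_{i=1}^{n_1}e^i+\sum_{j=1}^h r^j)$ in a single step rather than treating the two coordinate blocks separately, but the underlying idea — scaling an interior point of a full-dimensional rational cone so that it dominates every vertex — is identical, and your HNF-based construction of $\tau_0$ just makes explicit the existence claim the paper takes for granted.
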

\begin{proof}
As $R$ is a pointed polyhedron, $\conv(S)$ is also pointed and $\conv(S)\cap\mathbb{Z}^n=S$. 
Let $n_2$ denote the dimension of $\rec\left(\conv(S)\right)$. Since $\rec\left(\conv(S)\right)$ is contained in a rational linear subspace of dimension $n_2$, there is a unimodular transformation $u$ such that $u\left(\rec\left(\conv(S)\right)\right) = \rec\left(\conv(u(S))\right) \subseteq \left\{\mathbf{0}\right\}\times\mathbb{R}^{n_2}$.
Let $\rec\left(\conv(u(S))\right) = \cone\left\{r^1,\ldots,r^{h}\right\}$.
As $r^1,\ldots,r^{h}$ span $R^{n_2}$, it follows that $e^1,\ldots, e^{n_1}$, $r^1,\ldots,r^{h}$ span $\mathbb{R}^n$. Therefore, there exists a sufficiently large integer $M$ such that $v+M(\sum_{i=1}^{n_1}e^i+\sum_{j=1}^hr^j)\in \mathcal{C}$ for every vertex $v$ of $\conv(u(S))$.
Let $\nu$ be the undimodular transformation defined by $\nu(x):=x+M(\sum_{i=1}^{n_1}e^i+\sum_{j=1}^hr^j)$ for $x\in\mathbb{R}^n$. Then $\conv(\nu(u(S)))\subseteq \mathcal{C}$, and since $\nu$ is just a translation, the recession cone of $\conv(\nu(u(S)))$ remains the same as that of $\conv(u(S))$. Therefore, $\tau=\nu\circ u$ is the desired unimodular transformation.
\ifx\flagJournal\true \qed \fi
\end{proof}
By Lemma~\ref{LE:unimodular-closure}, $P_S$ is a rational polyhedron if and only if $\tau(P)_{\tau(S)}$ is a rational polyhedron, so we may assume that $S$ satisfies~\eqref{eq:S_pointed_assumption}. 

The second assumption is on the structure of the polyhedron $P$. Let $P^1$ and $P^2$ be defined as follows:
\begin{equation}\label{eq:P1P2}
P^1:=P+\mathcal{C},\quad P^2:=P-\mathcal{C}.
\end{equation}
Since $P\subseteq \conv(S)\subseteq \mathcal{C}$, $P^1$ is pointed and the extreme points of $P^1$ are contained in $\conv(S)$. Moreover, $P^1$ can be written as $P^1=\left\{x\in\mathbb{R}^n:\;Ax\geq b\right\}$
where $A\in\mathbb{Z}^{m\times n}$, $b\in\mathbb{Z}^m$ are matrices satisfying
\begin{equation}\label{eq:coeff-conditions}%
Ax\geq\mathbf{0}\text{ for all }x\in \left\{e^1,\ldots,e^{n_1},r^1,\ldots,r^{h}\right\}\quad\text{and}\quad b\geq\mathbf{0}.
\end{equation}
Similarly, $P^2$ can be written as $P^2=\left\{x\in\mathbb{R}^n:\;Ax\leq b\right\}$ for some $A\in\mathbb{Z}^{m\times n}$, $b\in\mathbb{Z}^m$ satisfying~\eqref{eq:coeff-conditions}. Basically, $P^1,P^2$ are polyhedra of the form $\pc$ or $\pp$:
\begin{equation}\label{eq:cov or pack}
\pc=\left\{x\in\mathbb{R}^n:\; Ax\geq b\right\}\quad\text{or}\quad \pp=\left\{x\in\mathbb{R}^n:\; Ax\leq b\right\}
\end{equation}
for some $A\in\mathbb{Z}^{m\times n}$, $b\in\mathbb{Z}^m$ satisfying~\eqref{eq:coeff-conditions}. When $\rec\left(\conv(S)\right)=\left\{{\bf 0}\right\}\times\mathbb{R}^{n_2}$, i.e., $\{r^1,\ldots, r^h\}=\{e^{n_1+1},\ldots, e^{n_1+n_2}\}$, $A$ and $b$ are simply matrices with nonnegative entries, in which case, $\pc$ is a covering polyhedron and $\pp$ is a packing polyhedron. In Sections~\ref{sec:cov} and~\ref{sec:pack}, we focus on polyhedra of the form $\pc$ and $\pp$, and we prove that the following holds:
\begin{theorem}\label{quote}
Let $Q\subseteq\mathbb{R}^n$ be a rational polyhedron of the form $\pc$ or $\pp$ as in~\eqref{eq:cov or pack} for some $A\in\mathbb{Z}^{m\times n}$, $b\in\mathbb{Z}^m$ satisfying~\eqref{eq:coeff-conditions}. Then $Q_{S}$ is a rational polyhedron provided that if $Q=\pc$, then $\pc\subseteq\mathcal{C}$ and the extreme points of $\pc$ are contained in $\conv(S)$. 
\end{theorem}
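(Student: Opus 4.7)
The plan is to prove Theorem~\ref{quote} by showing that in each of the two cases, $Q=\pc$ and $Q=\pp$, only finitely many of the $S$-CG cuts for $Q$ are nonredundant. The overall strategy follows the template used in~\cite{Dash19} for the special case $S=\mathbb{Z}^n_+$ with $\mathcal{C}=\mathbb{R}^n_+$, but the geometry is more delicate because $\mathcal{C}=\cone\{e^1,\ldots,e^{n_1},r^1,\ldots,r^h\}$ can have more than $n$ extreme rays and those rays can be linearly dependent.

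First I would reduce to a finite enumeration over \emph{support patterns}. For any $(\alpha,\beta)\in\Pi_Q$ (in the packing case, and symmetrically with a sign flip in the covering case), condition~\eqref{eq:coeff-conditions} forces $\alpha w\geq 0$ for every $w\in\mathcal{C}$, and for each generator $e^i$ or $r^j$ the value of $\alpha$ is either zero or strictly positive. There are only finitely many such sign patterns $\sigma\subseteq\{1,\ldots,n_1\}\cup\{1,\ldots,h\}$, so it suffices to exhibit, for each fixed $\sigma$, a finite subfamily of $S$-CG cuts within the class $\Pi_Q^\sigma$ that already implies all of them.

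The heart of the proof is the bounded-intersection lemma foreshadowed in the outline: for each fixed support pattern $\sigma$ there is a constant $M_\sigma$ such that whenever the $S$-CG cut derived from $(\alpha,\beta)\in\Pi_Q^\sigma$ is nonredundant, the intersection depth $\beta/(\alpha r^j)$ of the hyperplane $\{x:\alpha x=\beta\}$ with each active ray $r^j$ of $\rec(\conv(S))$, and analogously $\beta/(\alpha e^i)$, is bounded by $M_\sigma$. I would prove this by contradiction: using the Minkowski decomposition $\conv(S)=\conv\{v^1,\ldots,v^g\}+\cone\{r^1,\ldots,r^h\}$ from~\eqref{eq:S_pointed}, a sequence of cuts whose intersection depths blow up can be shown to be eventually dominated by $S$-CG cuts generated from the finitely many base directions associated with the $v^k$'s and finite positive combinations of the $r^j$'s, because the gap $\beta-\floor{\beta}_{S,\alpha}$ (or its ceiling analogue in the covering case) stays proportional to $\alpha r^j$ for at least one active ray by Meyer's structure theorem applied to $S$. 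The main obstacle here is precisely that the $r^j$ need not be linearly independent, so one cannot simply change coordinates along them as in~\cite{Dash19}; my workaround is to pick a maximal linearly independent subfamily of rays active under $\alpha$, work in those coordinates, and use $\alpha w\geq 0$ on all of $\mathcal{C}$ together with the conic representation of each remaining $r^j$ to transfer the bounds to the dependent rays.

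Once the bounded-intersection lemma is in hand, within a fixed pattern $\sigma$ the vector $\alpha$ is constrained up to positive scaling to a bounded slice of $\cone(A^\top)$, and integrality of $\alpha\in\mathbb{Z}^n$ removes the scale freedom, leaving only finitely many $\alpha$. For each such $\alpha$ the relevant values of $\beta$ lie in a bounded interval (again by the depth bound), and the map $\beta\mapsto\floor{\beta}_{S,\alpha}$ (resp. $\ceil{\beta}_{S,\alpha}$) is integer-valued and monotone, so it takes only finitely many values. Summing over the finitely many patterns $\sigma$ gives a finite family of $S$-CG cuts that defines $Q_S$, yielding polyhedrality. The covering case $Q=\pc$ is handled by the same template, with $\ceil{\cdot}_{S,\alpha}$ replacing $\floor{\cdot}_{S,\alpha}$; the extra hypothesis that the extreme points of $\pc$ lie in $\conv(S)$ guarantees that each supporting inequality from $\Pi_{\pc}$ is tight at a point of $\conv(S)$, so that $\ceil{\beta}_{S,\alpha}$ is well-defined and Meyer-type reductions apply as before.
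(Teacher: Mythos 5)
Your proposal correctly isolates the two phases of the argument---bound the intercepts of nonredundant $S$-CG cuts, then deduce polyhedrality---but the second phase as you state it does not work, and this is a genuine gap. You claim that once the intercepts $\beta/(\alpha r^j)$ are bounded within a fixed pattern $\sigma$, ``integrality of $\alpha\in\mathbb{Z}^n$ removes the scale freedom, leaving only finitely many $\alpha$.'' This is false: the intercept bound constrains the \emph{direction} of $(\alpha,\beta)$ to a polyhedral cone, and a non-degenerate polyhedral cone contains infinitely many primitive integer directions. (The fundamental-parallelepiped trick that gives finiteness in the classical $S=\mathbb{Z}^n$ case relies on shift-invariance of $\floor{\cdot}$ under integer translations, which fails for general $S$.) The paper never concludes finiteness of $\alpha$. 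Instead, Lemma~\ref{lemma:coverMstar} (and its packing analogue Lemma~\ref{lemma:packMstar}) shows that the bounded-intercept cuts only ``see'' a finite subset $S^*\subseteq S$, then partitions $\Pi$ by ray-support $I$ and, for each piece, invokes Theorem~\ref{lemma:QcapPi} from \cite{Dash19}---a nontrivial polyhedrality theorem for $S$-CG closures with \emph{finite} $S$ and multipliers restricted to a polyhedral cone $H(I)$. Your outline has no replacement for this ingredient, and without it the argument does not close.

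The bounded-intercept step is also substantially understated. You assert that the gap $\beta-\floor{\beta}_{S,\alpha}$ ``stays proportional to $\alpha r^j$ for at least one active ray by Meyer's structure theorem,'' but Meyer's theorem only gives polyhedrality of $\conv(S)$ and equality of recession cones; it says nothing of this sort, and the bound cannot be read off from a coordinate change along a linearly independent subfamily of rays. The actual mechanism (Lemma~\ref{LE:basics} and Lemma~\ref{LE:pbasics}) is: sort the multiplier $\lambda$, define the tilting ratio $r(\lambda,A)$ (Definition~\ref{DE:maindef}), and when $r(\lambda,A)>M$ invoke Dirichlet's simultaneous Diophantine approximation (Theorem~\ref{apx}) to produce integers $p_1,\ldots,p_\ell$ so that $\mu=\lambda-\Delta\cdot(p_1,\ldots,p_\ell,0,\ldots,0)$ is a nonnegative multiplier of strictly smaller $\ell_1$-norm whose $S$-CG cut dominates the one from $\lambda$; iterating yields a multiplier with tilting ratio at most $M$, whence the intercepts are bounded via inequality~\eqref{eq:bounding}. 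None of this---the tilting ratio, the Diophantine approximation, or the domination argument---appears in your sketch, and the ``sequence of cuts eventually dominated by cuts from base directions'' heuristic does not substitute for it. A smaller issue: you also propose to bound the intercepts $\beta/(\alpha e^i)$, but the paper bounds only the intercepts with the recession rays $r^j$; no bound along the $e^i$ directions is needed, nor is one obviously available.
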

We will prove this theorem for $Q=\pc$ and $Q=\pp$ separately in Theorems~\ref{thm:cov} and~\ref{thm:pack}, respectively. The setting in Theorem~\ref{quote} is essentially the most difficult case, after settling which, the remaining step would be to provide a reduction from the case in which $P\subseteq \conv(S)$ is any rational polyhedron to the narrowed case in Theorem~\ref{quote}. We will show this reduction in Section~\ref{sec:proof}, for which the construction of $P^1$ and $P^2$ as in~\eqref{eq:P1P2} will show up. Although the condition in~Theorem~\ref{quote} that $\pc\subseteq\mathcal{C}$ and the extreme points of $\pc$ are contained in $\conv(S)$ might look arbitrary at first glance, $P^1$ in~\eqref{eq:P1P2} satisfies the condition because of the assumption that $P\subseteq \conv(S)\subseteq\mathcal{C}$.
\subsection{Covering polyhedra}\label{sec:cov}

In Sections~\ref{sec:cov} and~\ref{sec:pack}, we assume that $\conv(S)\cap\mathbb{Z}^n=S$ and $\conv(S)$ is of the form~\eqref{eq:S_pointed} satisfying~\eqref{eq:S_pointed_assumption}. In this section, we consider polyhedra of the form $\pc$ as in~\eqref{eq:cov or pack} 
where $A\in\mathbb{Z}^{m\times n}$ and $b\in\mathbb{Z}^m$ satisfy~\eqref{eq:coeff-conditions}. We will prove that if $\pc\subseteq\mathcal{C}$ and the extreme points of $\pc$ are contained in $\conv(S)$, then $\pc_S$ is a rational polyhedron. 

Notice that every valid inequality for $\pc$ is of the form
\begin{equation}\label{eq:coveq} 
\alpha x\geq \beta\quad\text{where~}\alpha x\geq 0\text{ for all }x\in\left\{e^1,\ldots,e^{n_1},r^1,\ldots,r^{h}\right\}\text{ and }\beta \geq0.
\end{equation}
In Section~\ref{sec:prelim}, we defined $\Pi_P$ to collect inequalities of the form $\alpha x\leq \beta$. However, as we will be dealing with inequalities of the form $\alpha x\geq \beta$  in this section, we will abuse notation and define $\Pi_{\pc} $ as follows:
\[
\Pi_{\pc} = \left\{ (\lambda A, \lambda b) \in \Z^n\times\R:\;\lambda\in\mathbb{R}_+^m,\;{\lambda b}= \min\{\lambda Ax : x \in \pc\}\right\}.\]
Given $(\alpha,\beta)\in \Pi_{\pc}$, the $S$-CG cut obtained from $\alpha x\geq \beta$ is $\alpha x\geq \ceil{\beta}_{S,\alpha}$. In Section~\ref{sec:cov} and~\ref{sec:pack}, we need the notion of ``ray-support" defined as follows. Given a vector $\alpha\in\mathbb{R}^n$, the {\it ray-support} of $\alpha$, denoted $r\text{-supp}(\alpha)$, is defined as
$$r\text{-supp}(\alpha):=\left\{j\in N_r:~\alpha r^j>0\right\}\quad \text{where} \ N_r=\{1,\ldots,h\}.$$
Hence, the ray-support of $\alpha$ indicates which rays among $r^1,\ldots, r^h$ in the recession cone of $\conv(S)$ intersect hyperplane $\{x\in\mathbb{R}^n:\alpha x=\beta\}$ for arbitrary $\beta$. If $(\alpha,\beta)\in \Pi_{\pc}$ and $j\in r\text{-supp}(\alpha)$, then $\alpha r^j\geq 1$ as $\alpha$ and $r^j$ have integer entries. For $j\in r\text{-supp}(\alpha)$, the ray generated by $r^j$ always intersects $\{x\in\mathbb{R}^n:\alpha x=\beta\}$ at $x=(\beta/\alpha r^j)r^j$. Henceforth, $\beta/\alpha r^j$ for $j\in r\text{-supp}(\alpha)$ is referred to as an \emph{intercept} of $(\alpha,\beta)$. Also, whenever we mention intercepts of the corresponding $S$-CG cut $\alpha x\geq \ceil{\beta}_{S,\alpha}$, they refer to intercepts of $(\alpha,\beta)$. Having defined $\Pi_{\pc}$ and the ray-support, we provide a brief outline of the proof.
\begin{itemize}
	\item[1.] (Lemma~\ref{LE:basics} and Theorem~\ref{thm:cov}) We show that every nondominated $S$-CG cut for $\pc$ has bounded intercepts. More precisely, if the $S$-CG cut derived from $(\alpha,\beta)\in \Pi_{\pc}$ is \emph{not} redundant, then $(\alpha,\beta)$ is contained in \begin{equation}\label{cov:Pi}
	\Pi=\left\{(\alpha,\beta)\in\Pi_{\pc}:\; \beta/\alpha r^j\leq M^*\text{ for all }j\in r\text{-supp}(\alpha)\right\}
	\end{equation}
	for some sufficiently large integer constant $M^*$.
	\item[2.] (Lemma~\ref{lemma:coverMstar}) We show that $\pc_{S,\Pi}$ is a rational polyhedron. As the first step implies that $\Pi$ collects all nondominated $S$-CG cuts for $\pc$, $\pc_{S,\Pi}=\pc_{S}$ and thus $\pc_{S}$ is also a rational polyhedron. Essentially, what Lemma~\ref{lemma:coverMstar} shows is, given that every nondominated \scg for $\pc$ has bounded intercepts, $\pc_S$ is a rational polyhedron.
\end{itemize}
We consider the second step first and prove the first step later. Before we proceed, let us state some high-level intuitions behind our approach. What does it mean that every nondominated $S$-CG cut has bounded intercepts? When all intercepts are bounded by a fixed constant \emph{and $r\text{-supp}(\alpha)=N_r$}, the intersection of $\{x\in\mathbb{R}^n:\alpha x=\beta\}$ and $\conv(S)$ is bounded and can be squeezed in a bounded polytope, in which there are finitely many integer points. Although $r\text{-supp}(\alpha)$ may be a proper subset of $N_r$, the idea is to reduce the problem to the finite case, for which we already have the following result:
\begin{theorem}[{\bf \cite[Theorem~2.7]{Dash19}}]\label{lemma:QcapPi} 
	Let $S$ be a finite subset of $\mathbb{Z}^n$ and $P\subseteq\mathbb{R}^n$ be a rational polyhedron. Let $H \subseteq \R^{n}\times\R$ be a rational polyhedron that is contained in its recession cone $\rec(H)$ and let $\Omega  = \Pi_P \cap H$. Then, $P_{S,\Omega }$ is a rational polyhedron.
\end{theorem}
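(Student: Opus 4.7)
The plan is to reduce the infinite family of $S$-CG cuts indexed by $\Omega$ to a finite union of polyhedral subfamilies by exploiting the finiteness of $S$. Since $S$ is finite, for any $(\alpha,\beta)\in \Omega$ we have $\floor{\beta}_{S,\alpha}=\alpha x^{\star}$ for some $x^{\star}\in S$ attaining the maximum in the definition. I would stratify $\Omega=\bigcup_{x^{\star},T}\Omega_{x^{\star},T}$ over the finitely many pairs $(x^{\star},T)$ with $x^{\star}\in T\subseteq S$, where
\[
\Omega_{x^{\star},T}=\left\{(\alpha,\beta)\in\Omega :\ \alpha y\leq \beta\ \forall y\in T,\ \alpha y>\beta\ \forall y\in S\setminus T,\ \alpha y\leq \alpha x^{\star}\ \forall y\in T\right\}.
\]
Since $P_{S,\Omega}$ is then a finite intersection of the partial closures arising from each $\Omega_{x^{\star},T}$, it suffices to show each partial closure is a rational polyhedron.

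For a fixed pair $(x^{\star},T)$, every cut has the form $\alpha x\leq \alpha x^{\star}$ and depends only on $\alpha$, so the partial closure is determined by the projection $A_{x^{\star},T}$ of $\Omega_{x^{\star},T}$ onto the $\alpha$-coordinate. The defining constraints of $\Omega_{x^{\star},T}$ combine the polyhedral condition $(\alpha,\beta)\in H$, the support-function identity $\beta=\max\{\alpha x:x\in P\}$ coming from $\Pi_P$ (piecewise linear in $\alpha$), and linear strict/nonstrict inequalities in $(\alpha,\beta)$ encoding the combinatorial data. Because $\alpha\mapsto \alpha(x-x^{\star})$ is continuous, replacing $A_{x^{\star},T}$ by its topological closure $\overline{A_{x^{\star},T}}$ does not change the induced intersection of halfspaces. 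Refining the stratification by which face of $P$ attains the support value expresses $\overline{A_{x^{\star},T}}$ as a finite union of rational polyhedra in $\alpha$-space; the hypothesis $H\subseteq \rec(H)$ then lets us rescale any $(\alpha,\beta)\in \Omega_{x^{\star},T}$ by arbitrarily large factors while staying in $\Omega_{x^{\star},T}$ and producing the same cut, so each piece generates a rational polyhedral cone. Intersecting $\{x:\alpha x\leq \alpha x^{\star}\}$ over a finitely generated rational cone reduces to intersecting over its finitely many extreme rays, yielding a rational polyhedron.

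The main obstacle I expect is the non-polyhedral nature of $\Pi_P$: because $\beta=\max\{\alpha x:x\in P\}$ is only piecewise linear in $\alpha$, $\Omega$ itself is not a single polyhedron, which is what forces the secondary stratification by the face of $P$ carrying the maximum, and requires some care when passing from the strict inequalities in the definition of $\Omega_{x^{\star},T}$ to their closures without introducing spurious cuts. The assumption $H\subseteq \rec(H)$ is precisely what guarantees that the relevant $\alpha$'s form a finitely generated rational cone (modulo bounded pieces that do not affect the cut), and without it the number of nondominated cuts arising from $\Omega_{x^{\star},T}$ could be infinite.
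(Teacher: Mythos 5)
This theorem is quoted in the paper from \cite{Dash19} and is \emph{not} proved here, so there is no in-paper proof to compare against; what follows is an assessment of your sketch on its own terms.

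Your overall structure is sound and, I believe, close to what a proof must do: exploit finiteness of $S$ to stratify $\Omega$ by the pair $(x^\star,T)$, observe that each stratum contributes cuts of the fixed affine form $\alpha(x-x^\star)\le 0$, and reduce the intersection of these halfspaces over the stratum to a polar-cone computation, which is polyhedral once the relevant cone of normal vectors is a rational polyhedral cone. Two places, however, need tightening. First, the object you should be taking the closure of is not $A_{x^\star,T}$ (a set of integer vectors, hence already closed) but $\cone(A_{x^\star,T})$; after refining by the cone $N$ of the normal fan of $P$ on which $\beta=\alpha v_N$ for a fixed vertex $v_N$, each piece of $A_{x^\star,T}$ is the set of integer points of a rational region cut out by (some strict) linear inequalities, and one must argue that $\overline{\cone}$ of such a lattice set is a rational polyhedral cone (this is where rationality of $P$, $S$, $H$ and integrality of $\alpha$ all actually enter, e.g.\ via Meyer's theorem applied to the convex hull of these lattice points). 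Second, the stated role of $H\subseteq\rec(H)$ is not what your argument uses: as written, your continuity/polar-cone reduction would go through for any rational polyhedron $H$, since $\overline{\cone}$ of a rational polyhedron is always a rational polyhedral cone. You should either locate the precise step that fails without $H\subseteq\rec(H)$ or retract the claim that it is ``precisely what guarantees'' the cone is finitely generated; the hypothesis does make $\Omega$ closed under scaling by $\lambda\geq 1$ (useful in later lemmas of \cite{Dash19}, and matching how this theorem is invoked in the present paper with $H(I)\subseteq\rec(H(I))$), but your sketch does not exhibit where it is load-bearing. Finally, you should dispose of the degenerate stratum $T=\emptyset$ (the cut is $\mathbf{0}x\le -1$ and $P_{S,\Omega}=\emptyset$, trivially polyhedral), which can occur here since this theorem does not assume $P\subseteq\conv(S)$.
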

For the first step, we take an $S$-CG cut with a large intercept. Then, starting from this cut, we construct a sequence of $S$-CG cuts, each cut in which is dominated by the next one and the cut at the end has bounded intercepts.

Now let us prove Lemma~\ref{lemma:coverMstar}. 
\begin{lemma}\label{lemma:coverMstar}
Let $\Pi$ be defined as in~\eqref{cov:Pi} for some positive integer $M^*$. Then $\pc_{S,\Pi}$ is a rational polyhedron.
\end{lemma}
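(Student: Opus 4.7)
The plan is to reduce Lemma~\ref{lemma:coverMstar} to Theorem~\ref{lemma:QcapPi}, which handles the finite-$S$ case, by partitioning $\Pi$ according to ray-support and constructing, for each ray-support pattern, a finite subset of $S$ whose $S$-CG cuts match those of $S$ on the relevant piece.

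For each $J \subseteq N_r$, set $\Pi^J := \{(\alpha,\beta) \in \Pi: r\text{-supp}(\alpha) = J\}$ and form the rational polyhedral cone
\[
H^J := \big\{(\alpha,\beta) \in \mathbb{R}^n \times \mathbb{R}:\ \alpha r^j = 0\ \forall j \notin J,\ \beta \le M^*\alpha r^j\ \forall j \in J\big\}.
\]
A direct check gives $\Pi_{\pc} \cap H^J \subseteq \Pi$ (an element of this intersection with $\alpha r^{j_0}=0$ for some $j_0 \in J$ has $\beta=0$, so belongs to $\Pi^{J'}$ for $J' := r\text{-supp}(\alpha) \subsetneq J$) and $\bigcup_{J} (\Pi_{\pc} \cap H^J) = \Pi$; hence $\pc_{S,\Pi} = \bigcap_{J \subseteq N_r} \pc_{S,\,\Pi_{\pc} \cap H^J}$, a finite intersection.

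Fix $J$. By Lemma~\ref{LE:unimodular-closure}, a unimodular transformation aligns $W_J := \spann\{r^j:j \notin J\}$ with $\{\mathbf 0\} \times \mathbb{R}^{d_J}$ where $d_J = \dim W_J$, so that each $\alpha$ appearing in $\Pi^J$ has zeros in its last $d_J$ entries. Let $\pi$ be the projection onto the first $n-d_J$ coordinates and write $\alpha = (\alpha', \mathbf 0)$, so $\alpha x = \alpha' \pi(x)$. Define the bounded polytope
\[
\bar B_J := \conv\{\pi(v^i)\}_{i=1}^g + \Big\{\sum_{j \in J} \lambda_j \pi(r^j) :\ 0 \le \lambda_j \le M^*+1\Big\},
\]
set $\tilde T_J := \pi(S) \cap \bar B_J$ (finite because $\bar B_J$ is bounded and $\pi(S) \subseteq \mathbb{Z}^{n-d_J}$), choose a lift $T_J \subseteq S$ of $\tilde T_J$, and let $T^* := \bigcup_{J} T_J$, a fixed finite subset of $S$.

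The crux is to show $\ceil{\beta}_{T^*, \alpha} = \ceil{\beta}_{S, \alpha}$ for every $(\alpha,\beta) \in \Pi$. Given $(\alpha, \beta) \in \Pi^J$, pick $j^{**} \in \argmin_{j \in J} \alpha' \pi(r^j)$ and any $i$. Since $v^i \in S$ and $r^{j^{**}} \in \rec(R) \cap \mathbb{Z}^n$, the integer point $v^i + \mu r^{j^{**}}$ lies in $S$ for every $\mu \in \mathbb{Z}_+$; choosing $\mu := \lceil (\beta - \alpha' \pi(v^i))^+ / \alpha' \pi(r^{j^{**}}) \rceil$ gives $\mu \le M^*+1$ (by the intercept bound $\beta \le M^* \alpha' \pi(r^{j^{**}})$) and $\alpha' \pi(v^i + \mu r^{j^{**}}) \in [\beta,\, \beta + \alpha' \pi(r^{j^{**}}))$. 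This witness sits in $T_J \subseteq T^*$, yielding the uniform bound $\ceil{\beta}_{S,\alpha} < \beta + \min_{j \in J} \alpha' \pi(r^j)$. For any Minkowski decomposition $y^* = \sum_i \theta_i \pi(v^i) + \sum_{j \in J} \lambda_j \pi(r^j)$ of the true minimizer $y^* \in \pi(S)$, the nonnegativity $\alpha' \pi(v^i) \ge 0$ gives $\sum_j \lambda_j \alpha' \pi(r^j) \le \alpha' y^* \le (M^*+1) \min_{j \in J} \alpha' \pi(r^j)$, forcing $\lambda_j \le M^*+1$ for every $j$; hence $y^* \in \bar B_J$, so $y^* \in \tilde T_J$ and $T_J$ attains the minimum. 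The same argument applied with $J$ replaced by $r\text{-supp}(\alpha)$ handles the $\beta = 0$ elements of $\Pi_{\pc} \cap H^J \setminus \Pi^J$. Now Theorem~\ref{lemma:QcapPi} applied to $\pc$, $T^*$, and $H^J$ makes each $\pc_{T^*,\, \Pi_{\pc}\cap H^J} = \pc_{S,\,\Pi_{\pc}\cap H^J}$ a rational polyhedron, and the finite intersection $\pc_{S,\Pi}$ is therefore a rational polyhedron. The main technical obstacle is the Minkowski-decomposition step establishing $y^* \in \bar B_J$ uniformly in $\alpha$, which crucially uses both the intercept hypothesis and the closure of $S$ under adding integer vectors from $\rec(R)$.
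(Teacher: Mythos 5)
You take the same high-level route as the paper: partition $\Pi$ by ray-support, replace $S$ by a finite truncation on which the relevant cuts agree with those from $S$, and invoke Theorem~\ref{lemma:QcapPi}. The paper does this by fixing the single set $S^*:=S\cap\big(\conv\{v^1,\ldots,v^g\}+\{\sum_j\mu_jr^j:0\le\mu_j\le M^*\}\big)$, taking an actual minimizer $z^*\in S$ attaining $\ceil{\beta}_{S,\alpha}$, and capping its ray-coefficients at $M^*$; you instead build a fresh witness $v^i+\mu r^{j^{**}}$ from a vertex. That change opens a genuine gap. The asserted containment $\alpha'\pi(v^i+\mu r^{j^{**}})\in[\beta,\beta+\alpha'\pi(r^{j^{**}}))$ holds only when $\alpha'\pi(v^i)<\beta$; if every vertex satisfies $\alpha'\pi(v^i)\ge\beta$, the ceiling gives $\mu=0$ and the witness value $\alpha'\pi(v^i)$ has no a priori relation to $\beta+\alpha'\pi(r^{j^{**}})$. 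Concretely, with $n=2$, $\conv(S)=\{(0,K)\}+\cone\{(0,1)\}$ for a large integer $K$, and $\pc=\{(x,y):y\ge 1\}$, the pair $(\alpha,\beta)=((0,1),1)$ lies in $\Pi$ with $\ceil{\beta}_{S,\alpha}=K$, while your claimed uniform bound $\beta+\min_{j\in J}\alpha'\pi(r^j)$ equals $2$. So the Minkowski-decomposition step, which you correctly flag as the crux, does not go through as written.

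The gap is repairable. Take $i^*\in\argmin_i\alpha'\pi(v^i)$; the witness then yields the correct bound $\ceil{\beta}_{S,\alpha}\le\max\{\beta,\alpha'\pi(v^{i^*})\}+\alpha'\pi(r^{j^{**}})$, and the Minkowski step splits into two cases. If $\alpha'\pi(v^{i^*})<\beta$, your estimate $\sum_j\lambda_j\alpha'\pi(r^j)\le\alpha'y^*\le(M^*+1)\alpha'\pi(r^{j^{**}})$ applies as before. If $\alpha'\pi(v^{i^*})\ge\beta$, then $\sum_j\lambda_j\alpha'\pi(r^j)=\alpha'y^*-\sum_i\theta_i\alpha'\pi(v^i)\le\alpha'y^*-\alpha'\pi(v^{i^*})\le\alpha'\pi(r^{j^{**}})$, forcing every $\lambda_j\le 1$. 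Either way $\lambda_j\le M^*+1$, so $y^*\in\bar B_J$ as you wanted. With that correction the rest of your argument is sound, though noticeably heavier than the paper's: you introduce per-$J$ unimodular transformations, projections, and a union $T^*=\bigcup_J T_J$, where the paper works in the original space with the single truncation $S^*$.
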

\begin{proof}
Recall that $\conv(S)=\conv\left\{v^1,\ldots,v^{g}\right\}+\cone\left\{r^1,\ldots,r^{h}\right\}$. Let $S^*$ be a finite subset of $S$ defined as $$S^*:=S\cap(\conv\left\{v^1,\ldots,v^{g}\right\}+\left\{\mu_1r^1+\cdots+\mu_{h}r^{h}:0\leq \mu_j\leq M^*\text{ for }j\in N_r\right\}).$$ As $S^*\subseteq S$, we have $\pc_{S^*,\Pi}\subseteq \pc_{S,\Pi}$. We show that $\pc_{S^*,\Pi}=\pc_{S,\Pi}$, for which, it is sufficient to show that $\ceil{\beta}_{S^*,\alpha}=\ceil{\beta}_{S,\alpha}$ for every $(\alpha,\beta)\in \Pi$. To this end, take a pair $(\alpha,\beta)\in \Pi$. As $(\alpha,\beta)\in \Pi_{\pc}$ and $\conv(S)\subseteq \mathcal{C}$, it follows from~\eqref{eq:coveq} that $\alpha v^i\geq0$ for $i\in \{1,\ldots,g\}$, $\alpha r^j\geq 0$ for $j\in  N_r$, $\beta\geq 0$, and ${\beta}/{\alpha r^j}\leq M^*$ for $j\in r\text{-supp}(\alpha)$. 
Let $z^*\in S$ be such that $\alpha z^*=\ceil{\beta}_{S,\alpha}$.  As $z^*\in S\subseteq \conv(S)$, for some $\mu\geq\mathbf{0}$ and $\lambda\geq\mathbf{0}$ with $\mathbf{1}\lambda=1$, we have $z^*=\lambda_1v^1+\cdots+\lambda_{g}v^{g}+\mu_1r^1+\cdots+\mu_{h}r^{h}$. If $z^*\in S^*$, then $\alpha z^*=\ceil{\beta}_{S^*,\alpha}$, so $\ceil{\beta}_{S^*,\alpha}=\ceil{\beta}_{S,\alpha}$. Thus we may assume that $z^*\not\in S^*$, and therefore, there exists $j\in N_r$ with $\mu_j>M^*$. Let $\bar \mu$ be obtained from $\mu$ after reducing all coordinates of $\mu$ greater than $M^*$ to $M^*$. Let $\bar z$ be defined as $\bar z=\lambda_1v^1+\cdots+\lambda_{g}v^{g}+\bar \mu_1r^1+\cdots+\bar \mu_{h}r^{h}$. By definition, $\bar z\in S^*$. We will argue that $\alpha \bar z=\alpha z^*$, thereby showing that $\ceil{\beta}_{S^*,\alpha}=\ceil{\beta}_{S,\alpha}$. Since $\bar \mu\leq \mu$ and $\alpha r^j\geq 0$ for $j\in N_r$, we have $\alpha \bar z \leq \alpha z^*$. Suppose for a contradiction that $\alpha \bar z \neq \alpha z^*$. Then $\alpha \bar z<\alpha z^*$, and therefore, there exists $j\in N_r$ such that $\mu_j>M^*$ and $\alpha r^j>0$. Then $\bar \mu_j=M^*$, so $\alpha \bar z\geq \alpha \bar \mu_j r^j=M^*\alpha r^j$. As $\beta\leq M^*\alpha r^j$ for all $j\in r\text{-supp}(\alpha)$, this in turn implies that $\alpha \bar z\geq \beta$. However, this contradicts the choice of $z^*$ to be a minimizer of $\min\left\{\alpha z:\;\alpha z\geq \beta,\;z\in S\right\}$. Therefore, we have $\alpha z^*=\alpha z$, implying in turn that $\ceil{\beta}_{S^*,\alpha}=\ceil{\beta}_{S,\alpha}$ and that $\pc_{S^*,\Pi}=\pc_{S,\Pi}$.

Although $\Pi$ itself is not polyhedral, $\Pi(I)=\left\{(\alpha,\beta)\in\Pi:\;r\text{-supp}(\alpha)=I\right\}$ for any $I\subseteq  N_r$ is a rational polyhedron because $\Pi(I)=\Pi_{\pc}\cap H(I)$ where
\[
H(I)=\left\{(\alpha,\beta)\in\mathbb{R}^{n}\times\mathbb{R}:\;
\begin{array}{l}
\alpha r^j\geq 1\text{ for }j\in I,\ \ \alpha r^j=0\text{ for }j\in  N_r\setminus I,\\M^*\alpha r^j\geq\beta\text{ for }j\in I
\end{array}\right\}.
\]
Notice that $H(I)\subseteq \rec(H(I))$, so by Theorem~\ref{lemma:QcapPi}, $\pc_{S^*,\Pi(I)}$ is a rational polyhedron. As $\pc_{S^*,\Pi}=\bigcap_{I\subseteq N_r}\pc_{S^*,\Pi(I)}$, the proof is complete.
\ifx\flagJournal\true \qed \fi
\end{proof}

Next we go back to the first step and prove that the intercepts of every nondominated \scg for $\pc$ are bounded. Recall that $\pc$ is described by the system $Ax\geq b$ consisting of $m$ inequalities. We denote them by $a_1x\geq b_1,\ldots,a_mx\geq b_m$. Since $\pc$ is pointed, $m\geq 1$. Hence, for any $(\alpha,\beta)\in\Pi_{\pc}$, there is a multiplier vector $\lambda\in\mathbb{R}^m_+$ such that $\alpha=\lambda A=\sum_{i=1}^m\lambda_ia_i$ and $\beta=\lambda b$. Note that $a_ir^j\geq 0$ for all $i,j$, so for any $\lambda\in\mathbb{R}_+^m$, we have $r\text{-supp}(\lambda_ia_i)\subseteq r\text{-supp}(\lambda A)$. 
\begin{definition}\label{DE:maindef}
Let $\lambda\in\mathbb{R}^m_+\setminus\{\mathbf{0}\}$, and  $\lambda_{1}\geq\lambda_2\geq\cdots	\geq\lambda_{m}$. The {\it tilting ratio of $\lambda$ with respect to $A$} is defined as 
\begin{equation}\label{eq:t} 
r(\lambda,A) = {\lambda_{1}}/{\lambda_{t(\lambda, A)}}
\end{equation}
where  $t(\lambda, A)=\min\left\{j\in\{1, \ldots, m\}:\;\bigcup_{i=1}^jr\text{-supp}(a_i)=r\text{-supp}\left(\lambda A\right)\right\}$ is the smallest index $j$ such that the ray-support of $\sum_{i=1}^j \lambda_i a_i$ is the same as the ray-support of $\lambda A$.  In particular, $\lambda_1 \ldots, \lambda_{t(\lambda, A)} > 0$ and $r(\lambda,A)>0$.
\ifx\flagJournal\true \epr \fi
\end{definition}
It turns out that the tilting ratio is an important parameter for bounding the intercepts of an inequality. To demonstrate this, take $\lambda\in\mathbb{R}_+^m\setminus\left\{\mathbf{0}\right\}$ and $j\in r\text{-supp}(\lambda A)$. Note that	
\begin{equation}\label{eq:bounding}
\frac{\lambda b}{\lambda A r^j}=\frac{\sum_{i=1}^m\lambda_ib_i}{\sum_{i=1}^m\lambda_ia_ir^j}
\leq\frac{\lambda_1\sum_{i=1}^mb_i}{\lambda_t\sum_{i=1}^ta_ir^j}
=\frac{r(\lambda,A) \sum_{i=1}^mb_i}{\sum_{i=1}^ta_{i}r^j}\leq r(\lambda, A)\sum_{i=1}^mb_i
\end{equation}
where $t$ stands for $t(\lambda, A)$ and the last inequality is due to the fact that $\sum_{i=1}^ta_{i}r^j$ is a positive integer as $\bigcup_{i=1}^tr\text{-supp}\left(a_i\right)=r\text{-supp}\left(\lambda A\right)$. In~\eqref{eq:bounding}, $\sum_{i=1}^mb_i$ is fixed, which implies that if the tilting ratio $r(\lambda, A)$ is bounded, then the intercepts of $\lambda Ax\leq \lambda b$ are bounded. Therefore, it is sufficient to show that the tilting ratio of $\lambda$ is bounded. With this in mind, we focus on multiplier vectors $\lambda$ henceforth, instead of coefficient vectors $(\lambda A,\lambda b)$.

Next we decide the value of $M^*$ for~\eqref{cov:Pi}.
\begin{definition}\label{defM}\label{eq:B,C,D} 
Let $B=\max\limits_{1\leq i \leq m}\{b_i\}$ and $D=\sum_{i=1}^ma_i\left(\sum_{i=1}^{n_1}e^i+\sum_{j=1}^hr^j\right)$. We define $M_1 =2\left(mB+2D\right)$ and 
\begin{align*}
&M_i =(2mB\times M_1\times\cdots\times M_{i-1})^{i-1}M_1\ \ \text{for}\;i=2,\ldots,m-1.\label{eq:M_i}
\end{align*}
Having obtained $M_1,\ldots,M_{m-1}$, we define $M^*$ as follows:
$$M^*=mbM\quad\text{where}\quad M = \begin{cases}
M_1\times\cdots \times M_{m-1},&\text{if} \ m\geq 2\\
1,&\text{if} \ m=1
\end{cases}$$
In particular, if $m\geq 2$, $M\geq M_1\geq 4$. Moreover, $(M_i/M_1)^{1/(i-1)} \geq 4$, and thus, $(M_1/M_i)^{1/(i-1)} \leq 1/4$  for all $i \geq 2$. 
\ifx\flagJournal\true \epr \fi
\end{definition}
\noindent
By~\eqref{eq:bounding}, if $r(\lambda, A) \leq M$, the intercepts of $\lambda Ax\leq \lambda b$ are at most $M^*$ since $\sum_{i=1}^mb_i\leq mB$. What we will argue next is that when $r(\lambda, A)>M$, there exists another multiplier $\mu$ that defines an \scg dominating the one from $\lambda$. When $r(\lambda, A)$ is large, the components of $\lambda$ are not balanced in the sense that $\lambda_1$ is much larger than $\lambda_t$. In such case, we find a vector that approximates large components of $\lambda$, after substracting which from $\lambda$, we obtain a new multiplier $\mu$ that has more balanced components. Thanks to our choice of $M^*$ being a huge number in Definition~\ref{defM}, we will be able to show that the \scg derived from such multiplier $\mu$ dominates the one from $\lambda$. For the step of approximating the initial multiplier $\lambda$, we will need a result of Dirichlet:

\begin{theorem}[\bf Simultaneous Diophantine Approximation Theorem~\cite{D1842}]\label{apx}
	Let $k$ be a positive integer. Given any real numbers $r_1,\ldots,r_k$ and $0<\varepsilon<1$, there exist integers $p_1,\ldots,p_{k}$ and $q$ such that $\left|r_i-\frac{p_i}{q}\right|<\frac{\varepsilon}{q}$ for $i=1,\ldots,k$ and $1\leq q\leq\left(\frac{1}{\varepsilon}\right)^{k}$.
\end{theorem}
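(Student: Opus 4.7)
The plan is Dirichlet's classical pigeonhole argument in the unit cube $[0,1)^k$. First I would observe that the desired inequality $|r_i - p_i/q| < \varepsilon/q$ is equivalent to $|qr_i - p_i| < \varepsilon$, so it suffices to find a positive integer $q$ with $q \leq (1/\varepsilon)^k$ such that every $qr_i$ lies within $\varepsilon$ of some integer $p_i$. Writing $\{x\} := x - \floor{x}$ for the fractional part, this amounts to each $\{qr_i\}$ being within $\varepsilon$ of $0$ or of $1$.

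Next I would set $N := \ceil{1/\varepsilon}$, a positive integer satisfying $1/N \leq \varepsilon$, and partition $[0,1)^k$ into $N^k$ half-open sub-cubes of side length $1/N$, namely the boxes $B_{\mathbf a} := \prod_{i=1}^{k} \bigl[(a_i-1)/N,\, a_i/N\bigr)$ indexed by $\mathbf a \in \{1,\ldots,N\}^k$. To each integer $q \in \{0, 1, \ldots, N^k\}$ I would assign the point $P_q := (\{qr_1\},\ldots,\{qr_k\}) \in [0,1)^k$. Since there are $N^k + 1$ such points but only $N^k$ sub-cubes, the pigeonhole principle produces indices $0 \leq q_1 < q_2 \leq N^k$ with $P_{q_1}$ and $P_{q_2}$ lying in a common sub-cube $B_{\mathbf a}$.

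Finally, I would put $q := q_2 - q_1$ and $p_i := \floor{q_2 r_i} - \floor{q_1 r_i}$ for each $i$. Then $qr_i - p_i = \{q_2 r_i\} - \{q_1 r_i\}$, and since $P_{q_1}$ and $P_{q_2}$ share a half-open sub-cube of side $1/N$ we obtain the strict bound $|qr_i - p_i| < 1/N \leq \varepsilon$, and hence $|r_i - p_i/q| < \varepsilon/q$ after dividing through by $q$. The integer $q$ lies in $\{1,\ldots,N^k\}$, which yields the stated bound $(1/\varepsilon)^k$ modulo the integer-ceiling used in defining $N$.

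I do not foresee a serious obstacle, since the argument is entirely classical. The only technical care needed is the use of \emph{half-open} sub-cubes in the partition, which upgrades the naive pigeonhole estimate $\leq 1/N$ to the strict inequality $< 1/N$ required to conclude $|qr_i - p_i| < \varepsilon$ rather than $\leq \varepsilon$.
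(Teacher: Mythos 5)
The paper invokes this result as a classical theorem of Dirichlet (cite \cite{D1842}) and gives no proof of its own, so there is no in-paper argument to compare against. Your pigeonhole argument is the standard one and is essentially sound, including the correct use of \emph{half-open} sub-cubes to get the strict inequality $|qr_i - p_i| < 1/N$.

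The one genuine discrepancy, which you flag yourself, is that your construction with $N = \ceil{1/\varepsilon}$ yields $1 \le q \le N^k = \ceil{1/\varepsilon}^k$, which strictly exceeds the claimed bound $(1/\varepsilon)^k$ whenever $1/\varepsilon \notin \Z$. This is not merely cosmetic: the pigeonhole argument in the form you run it cannot produce the bound $q \le (1/\varepsilon)^k$ for non-integer $1/\varepsilon$, because a partition of $[0,1)^k$ into boxes of side $\le \varepsilon$ needs at least $\ceil{1/\varepsilon}^k > \lfloor (1/\varepsilon)^k \rfloor$ cells, while only $\lfloor (1/\varepsilon)^k \rfloor + 1$ of the points $P_q$ are available — no surplus. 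The clean pigeonhole statement is really: for any positive integer $N$, there are integers $q, p_1,\ldots,p_k$ with $1 \le q \le N^k$ and $|qr_i - p_i| < 1/N$ for all $i$; the form in the theorem coincides with this exactly when $1/\varepsilon$ is itself a positive integer. Fortunately, the paper only ever applies Theorem~\ref{apx} in Lemmas~\ref{LE:basics} and~\ref{LE:pbasics} with $\varepsilon = (M_1/M_\ell)^{1/(\ell-1)}$, and by Definition~\ref{defM} one has $1/\varepsilon = 2mB\,M_1\cdots M_{\ell-1} \in \Z_{>0}$, so the integer case is the only one that matters and your proof fully covers it. If you want to state the lemma at the level of generality the paper does, either add the hypothesis that $1/\varepsilon$ is an integer, or weaken the bound to $q \le \ceil{1/\varepsilon}^k$; both suffice for the paper's purposes.
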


This idea of subtracting an approximate vector to construct another multiplier was first considered in~\cite{Dash19}, and the following lemma extends the idea to general polyhedra.
\begin{lemma}\label{LE:basics}
Let $\lambda \in \R^m_+\setminus\{\mathbf{0}\}$ be such that $(\lambda A,\lambda b)\in  \Pi_{\pc}$. If $r(\lambda, A) > M$, then there exists $\mu \in \R^m_+\setminus\{\mathbf{0}\}$ that satisfies the following: (i)  $\|\mu\|_1\leq \|\lambda\|_1-1$,  (ii) $(\mu A,\mu b)\in  \Pi_{\pc}$, and (iii) $\mu A x\geq \ceil{\mu b}_{S,\mu A}$ dominates $\lambda A x\geq \ceil{\lambda b}_{S,\lambda A}$.
\end{lemma}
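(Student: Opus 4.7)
Without loss of generality assume $\lambda_1 \geq \cdots \geq \lambda_m$ and let $t = t(\lambda, A)$, so that $\lambda_1, \ldots, \lambda_t > 0$ and $\bigcup_{i=1}^t r\text{-supp}(a_i) = r\text{-supp}(\lambda A)$. The assumption $r(\lambda, A) = \lambda_1/\lambda_t > M$ says the positive components of $\lambda$ span an extremely wide range. Following the strategy outlined before Theorem~\ref{apx}, the plan is to approximate the large components of $\lambda$ by a rational multiple of an integer vector, subtract this ``approximate copy'' from $\lambda$, and take the result as $\mu$. The resulting $\mu$ will have its largest components pushed towards zero, hence a strictly smaller $\ell_1$ norm and a much smaller tilting ratio, while the \scgx from $\mu$ will turn out to dominate the one from $\lambda$.

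\emph{Construction of $\mu$.} Apply Theorem~\ref{apx} to the $t-1$ reals $\lambda_2/\lambda_1, \ldots, \lambda_t/\lambda_1 \in (0,1]$ with an $\varepsilon$ to be tuned in terms of $M_1, \ldots, M_{m-1}$, obtaining integers $q \geq 1$ and $p_2, \ldots, p_t$ with $|p_i/q - \lambda_i/\lambda_1| < \varepsilon/q$. Set $p_1 = q$ and $\gamma = \lambda_1/q$, form the integer vector $p = (p_1, \ldots, p_t, 0, \ldots, 0)$, and define $\mu = \lambda - \gamma p$. A small downward adjustment of $\gamma$ (by at most $\lambda_1 \varepsilon/q^2$) ensures $\mu \geq \mathbf{0}$. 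Property (i) then follows from $\gamma \|p\|_1 \geq \gamma q = \lambda_1 \geq 1$ (rescaling $\lambda$ by a positive integer if necessary, which leaves the associated \scgx and the tilting ratio unchanged). For (ii), $\mu A \in \Z^n$ and $\mu b \in \Z$ follow from integrality of $\lambda A, \lambda b$ together with $\gamma pA \in \Z^n$ and $\gamma pb \in \Z$ (achievable by choosing $\gamma = k/q$ for an integer $k$, using integrality of $p, a_i, b_i$); that $\mu b = \min\{\mu A x : x \in \pc\}$ holds because any vertex of $\pc$ making $\lambda A x$ and $pAx$ simultaneously tight also makes $\mu Ax$ tight, and such a vertex exists since the face exposed by $\lambda Ax$ lies in the intersection of the faces exposed by the individual rows $a_i x$ for every $i$ with $\lambda_i > 0$.

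\emph{Dominance, the main obstacle.} Property (iii) is the technical heart. Writing $(\lambda A, \lambda b) = (\mu A, \mu b) + \gamma(p A, p b)$, the identity
\[ \lambda A x - \ceil{\lambda b}_{S, \lambda A} \;=\; \bigl(\mu A x - \ceil{\mu b}_{S, \mu A}\bigr) \;+\; \gamma(pAx - pb) \;+\; \bigl(\mu b + \gamma p b - \ceil{\lambda b}_{S,\lambda A}\bigr) \]
expresses the desired slack as the sum of the slack of the $\mu$-cut (nonnegative by hypothesis), a Dirichlet-approximation error $\gamma(pAx - pb)$, and a rounding gain. The inflated constants $M_i = (2mB M_1 \cdots M_{i-1})^{i-1}M_1$ and $M^* = mBM$ of Definition~\ref{defM} are designed so that a pigeonhole argument on the successive ratios $\lambda_j/\lambda_{j+1}$ produces a choice of approximation level at which $\varepsilon$ can be picked small enough that the Dirichlet error is absorbed by the rounding gain (of order at least $1$ along the smallest-intercept ray in $r\text{-supp}(\lambda A)$), uniformly over $x \in S$. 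The main difference from the orthant setting of~\cite{Dash19} is that the recession rays $r^1, \ldots, r^h$ need not be linearly independent or axis-aligned; in their place we rely on the intercept bound~\eqref{eq:bounding} and the ray-support machinery of Definition~\ref{DE:maindef} to control intercepts. Carrying out these bookkeeping estimates, plausibly by induction on $t$, completes the proof.
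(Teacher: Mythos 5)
Your high-level plan (Dirichlet approximation of the large components of $\lambda$, then subtract an "approximate copy'' to get $\mu$) matches the paper's strategy, but the specific construction you propose has errors that would prevent the dominance argument from closing, and the dominance argument itself — which you rightly call the technical heart — is never actually carried out.

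The most serious problem is the scaling factor. You set $\mu = \lambda - \gamma p$ with $\gamma = \lambda_1/q$. The paper instead sets $\mu_i = \lambda_i - p_i\Delta$ for $i \leq \ell$, where $\Delta = \min\{\lambda A r^j : j \in r\text{-supp}(\lambda A)\}$ is the minimum intercept, a positive \emph{integer}. This choice is not cosmetic: the dominance proof hinges on the exact algebraic identity $\lambda A(z + f r^j) = \mu A z + \Delta\sum_{i=1}^\ell p_i b_i$, which works only because $\lambda A r^j = \Delta$ for the ray $r^j$ achieving the minimum intercept, so the $\Delta\sum p_i a_i z$ terms and $f\Delta = \Delta(\sum p_i b_i - \sum p_i a_i z)$ terms cancel. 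With your $\gamma$, this cancellation along a lattice ray of $S$ is lost and there is no analogue of the shift $z \mapsto z + f r^j$ certifying dominance. Scaling by $\Delta$ also makes integrality of $(\mu A, \mu b)$ automatic and gives the norm drop $\lambda_1 - \mu_1 = p_1\Delta \geq 1$ for free; your workarounds for both (choosing $\gamma = k/q$ for integrality, rescaling $\lambda$ for the norm bound) are mutually inconsistent with the requirement $\gamma q = \lambda_1$ and in any case do not guarantee $\gamma pA \in \Z^n$.

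Second, you apply Dirichlet to \emph{all} of $\lambda_2/\lambda_1, \ldots, \lambda_t/\lambda_1$, whereas the paper first runs a pigeonhole on the successive ratios $\lambda_i/\lambda_{i+1}$ to find the smallest $\ell$ with $\lambda_\ell/\lambda_{\ell+1} > M_\ell$, then approximates only $\lambda_1/\lambda_\ell, \ldots, \lambda_{\ell-1}/\lambda_\ell$ and modifies only the first $\ell$ coordinates. You mention the pigeonhole idea in passing but do not use it in the construction, and this matters: the whole point is to subtract from the disproportionately large components while keeping $\lambda_{\ell+1}, \ldots, \lambda_m$ untouched, so that $\supp(\mu) = \supp(\lambda)$ (hence $r\text{-supp}(\mu A) = r\text{-supp}(\lambda A)$), a property used repeatedly afterward. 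Your choice of $p_1 = q$ with $\gamma q = \lambda_1$ forces $\mu_1 = 0$ and generally drives $\mu_2, \ldots, \mu_t$ close to zero as well, destroying the support equality. Finally, your slack identity has a minor slip ($\mu b$ should be $\ceil{\mu b}_{S,\mu A}$ in the last term), and after writing it you appeal to "bookkeeping'' and "plausibly by induction on $t$'' without performing the estimates, so the crucial part (iii) is left as a gap even setting aside the issues above.
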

\begin{proof}
After relabeling the rows of $Ax\geq b$, we may assume that $\lambda_1\geq\cdots\geq\lambda_m$. Let $t$ stand for $t(\lambda,A)$. If $t=1$, we have $r(\lambda,A)=1\leq M$, contradicting our assumption. This means that $t\geq 2$, so $m\geq 2$. Let $\Delta$ and $k$ be defined as
\begin{equation}\label{eq:delta}
\Delta=\min\left\{\lambda Ar^j:j\in r\text{-supp}(\lambda A)\right\},
\end{equation}
\begin{equation}\label{eq:k}
k=\text{argmin}\bigg\{\lambda Ar^j:\;j\in r\text{-supp}\left(\lambda A\right)\setminus\bigcup_{i=1}^{t-1}r\text{-supp}(a_i)\bigg\}.
\end{equation}
By the definition of $t$, $r\text{-supp}\left(\lambda A\right)\setminus\bigcup_{i=1}^{t-1}r\text{-supp}(a_i)$ is not empty, and therefore, $k$ is a well-defined index. Moreover, we obtain
\begin{equation}\label{eq:basics-1}
\Delta\leq \lambda Ar^k=\sum_{i=t}^m\lambda_ia_ir^k\leq \lambda_t\sum_{i=t}^ma_ir^k\leq D\lambda_t
\end{equation}
where the first inequality is due to~\eqref{eq:delta}, the equality holds due to~\eqref{eq:k}, the second inequality follows from the assumption that $\lambda_t\geq\lambda_{t+1}\geq\cdots\geq\lambda_m$, and the last inequality follows from the choice of $D$ given in Definition~\ref{eq:B,C,D}.
As $r(\lambda, A) = \frac{\lambda_1}{\lambda_t} = \frac{\lambda_1}{\lambda_2} \times \cdots \times \frac{\lambda_{t-1}}{\lambda_{t}} > M \ge M_1 \times \cdots \times M_{t-1}$, 
there exists  $\ell\in \{1, \ldots, t-1\}$ such that $\lambda_{\ell} / \lambda_{\ell+1}>M_\ell$. We take the minimum number among such indices, so we may assume that
\begin{equation}\label{eq:ell}
\lambda_{i} / \lambda_{i+1} \leq M_i\text{ for all }i \in \{ 1, \ldots, \ell-1\}\quad\text{and}\quad	\lambda_{\ell} / \lambda_{\ell+1}>M_\ell.
\end{equation}
Hence, $\lambda_1,\ldots,\lambda_\ell$ are much larger than $\lambda_{\ell+1},\ldots, \lambda_{t}$, as $M_1,\ldots, M_{t-1}$ were chosen to be large numbers in Definition~\ref{defM}. Now we construct the vector $\mu\in\R^m\setminus\{\mathbf{0}\}$. We consider the case $\ell\geq 2$ first. It follows from Theorem~\ref{apx} (with $k = \ell-1$ and $r_i = \lambda_i/\lambda_\ell$ for $i\in\{1, \ldots, \ell-1\}$) that there exist positive integers $p_1,\ldots,p_\ell$ satisfying
\begin{equation}\label{eq:Diophantine-approx}
\left|{\lambda_i}/{\lambda_{\ell}}-{p_i}/{p_{\ell}}\right|<{\varepsilon}/{p_{\ell}},\;i\in\{1,\ldots,\ell\}
\quad\text{and}\quad p_{\ell}\leq\varepsilon^{-(\ell-1)}
\end{equation}
where $\varepsilon = (M_1/M_\ell)^{1/(\ell-1)}$.
Moreover, for all $i \in\{1, \ldots, \ell-1\}$, we can assume that $p_i \geq p_{i+1} \geq p_\ell$, as $\lambda_i \geq \lambda_{i+1}$. 
If $p_i < p_{i+1}$ for some $i \in \{1, \ldots, \ell-1\}$, then increasing $p_i$ to $p_{i+1}$ can only reduce $|\lambda_i/\lambda_\ell - p_i/p_\ell|$. Note that $(p_1,\ldots,p_\ell,0,\ldots,0)\in \R_+^m$ is a vector approximating the large components of $\lambda$. Now we define a new multiplier $\mu=(\mu_1, \ldots, \mu_m)$ by taking out a multiple of $(p_1,\ldots,p_\ell,0,\ldots,0)$ from $\lambda$ as follows:
\begin{equation}\label{defmu}
\mu_i = \left\{\begin{array}{ll} \lambda_i - p_i \Delta & \text{ for } i\in\{1, \ldots, \ell\},\\
\lambda_i & \text{ otherwise}.
\end{array}\right.
\end{equation}
If, on the other hand, $\ell=1$, we  define $\mu$ as in \eqref{defmu} with $p_1=1$. 

Next, we show that $\mu$ satisfies the desired properties. First of all, Claim~1 below can be proved similarly as Claim~1 in the proof of Lemma 4.10 in \cite{Dash19}.
\begin{claim'}\label{claim2}
	$\mu\ge\mathbf{0}$ and $\text{supp}(\mu )=\text{supp}(\lambda)$.
\end{claim'}
\noindent
Since $\text{supp}(\mu)=\text{supp}(\lambda)$ by Claim~1 and $Ar^j\geq \mathbf{0}$ for all $j\in N_r$, it follows that $r\text{-supp}(\mu A)=r\text{-supp}(\lambda A)$, and therefore, $t(\mu, A) = t(\lambda, A)$. 

The next claims extend Claims 2--4 of Lemma 4.10 in~\cite{Dash19}.
\begin{claim'}\label{claim4}
	$\mu b=\min\left\{\mu Ax:x\in\pc\right\}$ and therefore $(\mu A,\mu b)\in  \Pi_{\pc}$.
\end{claim'}
{\textit{Proof of Claim.}} As $\lambda b=\min\left\{\lambda Ax:x\in\pc\right\}$ and $\pc=\left\{x\in\mathbb{R}^n:Ax\geq b\right\}$, there exists $x^*\in \pc$ such that $\lambda A^*x = \lambda b$. 
By complementary slackness, if $\lambda_i>0$ for an $i\in\{1, \ldots, m\}$, then $a_ix^*=b_i$. 
As $\lambda\ge\mu\ge \mathbf{0}$, 
if $\mu_i>0$ then $a_ix^*=b_i$ also holds. 
Therefore, $\mu A x^*=\mu b=\min\left\{\mu Ax:x\in\pc\right\}$.
\ifx\flagJournal\true \epr \fi
\begin{claim'}\label{claim5}
Let $\Theta = \left\{x \in \mathcal{C}:\;\mu b \leq \mu A x \leq \mu b  + \Delta\right\}$. There is no point $x\in \Theta$ that satisfies
\begin{equation}\label{ineqcontra}
\sum_{i=1}^{\ell} p_ia_ix \geq 1+ \sum_{i=1}^{\ell} p_ib_i.
\end{equation}
\end{claim'}
{\textit{Proof of Claim.}} Suppose for a contradiction that there exists $\tilde x \in \Theta$ satisfying~\eqref{ineqcontra}. Recall that for the index $k$ defined in \eqref{eq:k}, the inequality $\mu Ar^k>0$ holds. 
Let $v = \frac{\mu b}{\mu Ar^k}r^k$. Then $\mu A v =\mu b$ and $v\in \Theta$. In addition, for the index $\ell$ defined in \eqref{eq:ell}, we have $\sum_{i=1}^\ell p_ia_iv =0$
since $k\not\in \bigcup_{i=1}^{t-1}r\text{-supp}(a_i)$ and $a_ir^k=0$ for  $i\leq t-1$. As  $\tilde x \in \Theta$ satisfies \eqref{ineqcontra} and $v\in \Theta$ satisfies $\sum_{i=1}^\ell p_ia_iv =0$, we can take a convex combination of these points to get a point $\bar x \in \Theta$ such that $\sum_{i=1}^\ell p_ia_i \bar x=1+\sum_{i=1}^\ell p_ib_i$ and thus $\sum_{i=1}^\ell p_i(a_i \bar x-b_i) =1$. As $\mu A \bar x  \le \mu b + \Delta$, we have
\begin{equation}\label{eq:5star}
\sum_{i=1}^\ell\mu_i(a_i\bar x-b_i) \le  -\sum_{j=\ell+1}^m\mu_j(a_j\bar x-b_j) + \Delta.
\end{equation}
By~\eqref{eq:Diophantine-approx}, we can define $\varepsilon_i \in[ -\varepsilon , \varepsilon]$ such that ${\lambda_i}/{\lambda_\ell}-{p_i}/{p_\ell}={\varepsilon_i}/{p_\ell}$.
		Then, along with the fact that $\mu_i=\lambda_i-p_i\Delta$ for $i\leq \ell$ and $\sum_{i=1}^\ell p_i(a_i \bar x-b_i) =1$, we can rewrite the left hand side of~\eqref{eq:5star} as $(\frac{\lambda_\ell}{p_\ell} - \Delta) + \frac{\lambda_\ell}{p_\ell}\sum_{i=1}^\ell \varepsilon_i(a_i\bar x-b_i)$.
		Therefore, we deduce from~\eqref{eq:5star} that
		\begin{align} &\frac{\lambda_\ell}{p_\ell}\bigg(1 + \sum_{i=1}^\ell\varepsilon_i(a_i\bar x-b_i)\bigg) \leq -\sum_{j=\ell+1}^m\mu_j(a_j\bar x-b_j) + 2\Delta\notag\\&\qquad\qquad\qquad\leq \sum_{j=\ell+1}^m\mu_jb_j + 2\Delta \leq \lambda_{\ell+1}(mB + 2D) = \frac{1}{2}\lambda_{\ell+1}M_1 \label{eq:double-star}
		\end{align}
		where the second inequality in~\eqref{eq:double-star} follows from the assumption that $A\in\mathbb{Z}^{m\times n}$ and $b\in\mathbb{Z}^m$ satisfy~\eqref{eq:coeff-conditions} and the third inequality follows from the fact that $\mu_i = \lambda_i \leq \lambda_{\ell+1}$ for $i=\ell+1, \ldots, m$ by~\eqref{defmu}, $b_j \leq B$ by Definition~\ref{defM}, and $\Delta\leq D\lambda_t$ in~\eqref{eq:basics-1}. The last equality  simply follows from the definition of $M_1$.
		
		Next, we obtain a lower bound the first term in~\eqref{eq:double-star}. As $a_i\bar x\geq0$, $b_i\geq0$, and $\varepsilon_i \in[ -\varepsilon , \varepsilon]$, we have $\sum_{i=1}^\ell\varepsilon_i\left(a_i\bar x-b_i\right) \geq - \varepsilon\sum_{i=1}^\ell (a_i\bar x + b_i)$.
		Following the same argument in Claim~3 of Lemma 4.10 in~\cite{Dash19}, we can show that $-\varepsilon\sum_{i=1}^\ell (a_i\bar x + b_i) \geq -\frac{1}{2}$.
		Then it follows from that $\sum_{i=1}^\ell\varepsilon_i(a_i\bar x-b_i)\geq -{1}/{2}$. So, the first term of~\eqref{eq:double-star} is lower bounded by ${\lambda_\ell}/{2p_\ell}$. Since the first term in~\eqref{eq:double-star} is at least ${\lambda_\ell}/{2p_\ell}$, we obtain $\lambda_\ell\le p_\ell \lambda_{\ell+1} M_1$ from~\eqref{eq:double-star}, implying in turn that $M_\ell< p_\ell M_1$ as we assumed that $\lambda_\ell> M_\ell\lambda_{\ell+1}$ in~\eqref{eq:ell}. However,~\eqref{eq:Diophantine-approx} implies that $M_\ell\geq p_\ell M_1$, a contradiction.
		\ifx\flagJournal\true \epr \fi

	\begin{claim'}\label{claim7}
		$\mu A x\geq \ceil{\mu b}_{S,\mu A}$ dominates $\lambda A x\geq \ceil{\lambda b}_{S,\lambda A}$.	
	\end{claim'}
		{\textit{Proof of Claim.}} We will first show that	\begin{equation}\label{eq:claim6} \mu b\leq \ceil{\mu b}_{S,\mu A}\leq \mu b+\Delta\end{equation} holds.
		Set $(\alpha,\beta)=(\mu A,\mu b)$.
		By Claim~\ref{claim4}, we have that $\beta =\min\{\alpha x:x\in \pc\}$. As the extreme points of $\pc$ are contained in $\conv(S)$, it follows that $\beta \geq \min\{\alpha z:z\in S\}$. If $\beta =\min\{\alpha z:z\in S\}$, then $\beta = \ceil{\beta }_{S,\alpha}$. 
		Thus we may assume that $\beta >\min\{\alpha z:z\in S\}$, so there exists $z'\in S$ such that $\beta >\alpha z'$.  Remember that $\Delta=\min\{\lambda Ar^j:j\in r\text{-supp}(\lambda A)\}$ in~\eqref{eq:delta}. Take $j$ such that $\lambda Ar^j=\Delta$.
		As $r\text{-supp}(\lambda A)=r\text{-supp}(\mu A)$, we have $\alpha r^j>0$ and $\kappa=({\beta - \alpha  z'})/{\alpha r^j}>0$. Therefore $z''=z'+\lceil\kappa\rceil r^j\in S$. 
		Observe that 
		$\beta =\alpha \left(z'+\kappa r^j\right)
		\leq \alpha \left(z'+\lceil\kappa\rceil r^j\right)  
		=\beta+\alpha r^j(\lceil\kappa\rceil-\kappa)
		\leq~\beta+\alpha r^j$.
		As $\lambda\ge\mu$, we have $\Delta\ge \alpha r^j$ implying $\beta\le\alpha z''\le\beta+\Delta$ and  \eqref{eq:claim6} hold, as desired.
		
		Using~\eqref{eq:claim6}, we will show that $\mu A x\geq \ceil{\mu b}_{S,\mu A}$ dominates $\lambda A x\geq \ceil{\lambda b}_{S,\lambda A}$. Let $z\in S$ be such that $\mu A z=\ceil{\mu b}_{S,\mu A}$. 
		As $z$ is integral and $\mu b\leq \ceil{\mu b}_{S,\mu A}\leq \mu b+\Delta$ by~\eqref{eq:claim6}, Claim~\ref{claim5} implies that $\sum_{i=1}^\ell p_ia_i z < 1 + \sum_{i=1}^\ell p_ib_i$ and thus $\sum_{i=1}^\ell p_ia_i z = \sum_{i=1}^\ell p_ib_i - f$
		for some integer $f \in [0,\sum_{i=1}^\ell p_ib_i]$. Consider $z+fr^j\in S$ and note that $\lambda A\left(z+fr^j\right)=\left(\mu A+\Delta\sum_{i=1}^\ell p_ia_i\right)z+\Delta\sum_{i=1}^\ell p_i(b_i - a_iz)=\ceil{\mu b}_{S,\mu A}+\Delta\sum_{i=1}^\ell p_ib_i$.
		Since $\ceil{\mu b}_{S,\mu A} \geq \mu b$, we must have $\ceil{\mu b}_{S,\mu A}+\Delta\sum_{i=1}^\ell p_ib_i\geq\mu b+\Delta\sum_{i=1}^\ell p_ib_i=\lambda b.$
		Then $\ceil{\mu b}_{S,\mu A}+\Delta\sum_{i=1}^\ell p_ib_i\geq \ceil{\lambda b}_{S,\lambda A}$. So, the inequality $\lambda A x\geq \ceil{\lambda b}_{S,\lambda A}$ is dominated by $\mu A x\geq \ceil{\mu b}_{S,\mu A}$, as the former is implied by the latter and a nonnegative combination of the inequalities in $Ax\geq b$, as required.
		\ifx\flagJournal\true \epr \fi

By construction, $\mu$ satisfies (i), and by Claims~\ref{claim4} and~\ref{claim7}, $\mu$ satisfies (ii) and (iii), as required.
\ifx\flagJournal\true \qed \fi
\end{proof}

Now we are ready to prove that $\pc_S$ is a rational polyhedron. 
\begin{theorem}\label{thm:cov} 
Let $\Pi$ be defined as in~\eqref{cov:Pi} with $M^*=mBM$. If $\pc\subseteq\mathcal{C}$ and the extreme points of $\pc$ are contained in $\conv(S)$, then $\pc_S=\pc_{S,\Pi}$, and in particular, $\pc_S$ is a rational polyhedron.
\end{theorem}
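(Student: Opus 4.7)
The plan is to establish the equality $\pc_S = \pc_{S,\Pi}$ first, after which Lemma~\ref{lemma:coverMstar} (with the specified $M^*$) immediately gives that $\pc_{S,\Pi}$, and hence $\pc_S$, is a rational polyhedron. Since $\Pi \subseteq \Pi_{\pc}$, the inclusion $\pc_S \subseteq \pc_{S,\Pi}$ is automatic, so the real work is in proving $\pc_{S,\Pi} \subseteq \pc_S$. I would do this by showing that every $S$-CG cut from an arbitrary $(\lambda A, \lambda b) \in \Pi_{\pc}$ is dominated by an $S$-CG cut from $\Pi$, in the sense already used in Lemma~\ref{LE:basics}: the latter cut together with a nonnegative combination of the rows of $Ax \ge b$ implies the former.

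I would split on the tilting ratio. If $r(\lambda, A) \le M$, then the bound $\sum_{i=1}^m b_i \le mB$ combined with the inequality chain~\eqref{eq:bounding} shows every intercept $\lambda b/(\lambda A r^j)$ with $j \in r\text{-supp}(\lambda A)$ is at most $M \cdot mB = M^*$, so $(\lambda A, \lambda b)$ already lies in $\Pi$. Otherwise, I would apply Lemma~\ref{LE:basics} iteratively: each application returns a new multiplier $\mu \in \R_+^m \setminus \{\mathbf{0}\}$ with $(\mu A, \mu b) \in \Pi_{\pc}$, whose $S$-CG cut dominates the previous one and whose $\ell_1$-norm is at least one smaller. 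Because the iterates stay nonzero while $\|\mu\|_1$ strictly decreases by at least $1$ per step, the process must terminate in finitely many iterations, and it can terminate only when the current tilting ratio has dropped to at most $M$, at which point the resulting pair is in $\Pi$ by the first case. Composing the chain of dominations yields a single pair in $\Pi$ dominating the original $(\lambda A, \lambda b)$.

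To extract $\pc_{S,\Pi} \subseteq \pc_S$ from this, I would first note that the trivial multipliers $\lambda = e_i$ produce the pairs $(a_i, b_i) \in \Pi$, since every intercept $b_i/(a_i r^j)$ is at most $B \le M^*$. Therefore $\pc_{S,\Pi}$ enforces $a_i x \ge \ceil{b_i}_{S, a_i} \ge b_i$ for each $i$, so $\pc_{S,\Pi} \subseteq \pc$. Then for any $x \in \pc_{S,\Pi}$ and any $(\lambda A, \lambda b) \in \Pi_{\pc}$, the dominating pair $(\mu A, \mu b) \in \Pi$ produced above satisfies $\mu A x \ge \ceil{\mu b}_{S,\mu A}$, and since $x$ also satisfies $Ax \ge b$, the domination relation yields $\lambda A x \ge \ceil{\lambda b}_{S,\lambda A}$. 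The only nontrivial step is the termination of the iteration, which rests entirely on the unit $\ell_1$-decrease guaranteed by Lemma~\ref{LE:basics}; rational polyhedrality of $\pc_S$ then follows at once from Lemma~\ref{lemma:coverMstar}.
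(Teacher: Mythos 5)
Your argument is essentially identical to the paper's: both establish $\pc_S\subseteq\pc_{S,\Pi}$ from $\Pi\subseteq\Pi_{\pc}$, both use~\eqref{eq:bounding} to translate the intercept bound into the tilting-ratio bound $r(\lambda,A)\leq M$, and both iterate Lemma~\ref{LE:basics} with the unit $\ell_1$-decrease to terminate at a dominating pair in $\Pi$. The one place you are slightly more explicit than the paper is in spelling out why the transitive domination chain actually yields $\pc_{S,\Pi}\subseteq\pc_S$: you note that $\pc_{S,\Pi}\subseteq\pc$ (via the pairs $(a_i,b_i)$) so that the auxiliary constraints $Ax\geq b$ used in the domination relation are available on $\pc_{S,\Pi}$, a point the paper leaves implicit; this step tacitly assumes each $(a_i,b_i)$ lies in $\Pi_{\pc}$, i.e.\ the rows of $Ax\geq b$ are tight, but the paper makes the same tacit assumption, so nothing is amiss.
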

\begin{proof}
As $\Pi\subseteq\Pi_{\pc}$, we have $\pc_{S}\subseteq \pc_{S,\Pi}$.
We will show that $\pc_{S}=\pc_{S,\Pi}$ by arguing that for each $(\alpha,\beta)\in \Pi_{\pc}$, there is an $(\alpha',\beta')\in \Pi$ such that the \scg  derived from  $(\alpha',\beta')$ dominates the \scg  derived from  $(\alpha,\beta)$ on $\pc$ by constructing a sequence that ends with such $(\alpha',\beta')$.

Let $\lambda\in\mathbb{R}_+^m\setminus\left\{\mathbf{0}\right\}$ be such that  $(\lambda A,\lambda b)\in \Pi_{\pc}$, and set $(\alpha,\beta)=(\lambda A,\lambda b)$. If ${\beta}/{\alpha r^j}\leq M^*$   for all $j\in r\text{-supp}(\alpha)$, then $(\alpha,\beta)\in\Pi$ as desired. Otherwise, consider an arbitrary  $j\in r\text{-supp}(\alpha)$ such that ${\beta}/{\alpha r^j} >  M^*$. By~\eqref{eq:bounding}, we obtain $M^*<{mB}\,r(\lambda, A)$.
As $M^* = mBM$, we have  $r(\lambda, A) > M$. 
Then, by Lemma~\ref{LE:basics}, there exists a $\mu\in \R^m_+\setminus\left\{\mathbf{0}\right\}$ such that $\|\mu\|_1\leq \|\lambda\|_1-1$ and the \scg  generated by $\mu$ dominates the \scg  generated by $\lambda$ for $\pc$.
If necessary,  we can repeat this argument and construct a sequence of vectors $\mu^1, \mu^2, \ldots,$ with decreasing norms, each of which defines an \scg that dominates the previous one. 
Therefore, after at most $\|\lambda\|_1$ iterations, we must obtain a vector $\hat\mu\in\R^m_+\setminus\left\{\mathbf{0}\right\}$ such that $r(\hat\mu, A) \leq M$ and  $(\hat\mu A,\hat\mu b)\in \Pi$. 
As $(\hat\mu A,\hat\mu b)\in\Pi$ and the \scg  generated by $\hat\mu$ dominates the \scg  generated by $\lambda$ for $\pc$, we conclude that  $\pc_S=\pc_{S,\Pi}$. 
Moreover, as  $\pc_{S,\Pi}$ is a rational polyhedron by Lemma~\ref{lemma:coverMstar}, it follows that $\pc_S$ is a rational polyhedron, as desired.
\ifx\flagJournal\true \qed \fi
\end{proof}
\subsection{Packing polyhedra}\label{sec:pack}

In this section, we show that $\pp_S$ is a rational polyhedron, where $\pp$ is defined as in~\eqref{eq:cov or pack} for some $A\in\mathbb{Z}^{m\times n}$ and $b\in\mathbb{Z}^m$ satisfying~\eqref{eq:coeff-conditions}.  Unlike $\pc$, $\pp$ is not necessarily pointed. Another difference is that we do not need to assume that the extreme points of $\pp$ are contained in $\conv(S)$. Other than these, intuitions and techniques developed for $\pc$ still apply to $\pp$ as well. If $\pp=\mathbb{R}^n$, then $\pp_S=\mathbb{R}^n$ is trivially a rational polyhedron. Hence, we may assume that $m\geq 1$. As in~\eqref{eq:pip}, we define $\Pi_{\pp}$ as 
\begin{eqnarray}
\Pi_{\pp} = \left\{ (\lambda A,\lambda b) \in \Z^n\times\R:\;\lambda\in\mathbb{R}_+^m,\;{\lambda b}= \max\{\lambda A x : x \in \pp\}\right\}.
\end{eqnarray}
Given $(\alpha,\beta)\in \Pi_{\pp}$, the $S$-CG cut obtained from $\alpha x\leq \beta$ is $\alpha x\leq \floor{\beta}_{S,\alpha}$. 

\begin{lemma}\label{lemma:packMstar}
Let $M^*$ be a positive integer, and let
\begin{equation}
\Pi=\left\{(\alpha,\beta)\in\Pi_{\pp}:\; {\beta}/{\alpha r^j}\leq M^*\text{ for all }j\in r\text{-supp}(\alpha)\right\}.
\end{equation}
Then $\pp_{S,\Pi}$ is a rational polyhedron.
\end{lemma}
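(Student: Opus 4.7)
The plan is to mirror the proof of Lemma~\ref{lemma:coverMstar} by adapting the covering argument to the packing setting. Following that template, first define the finite set
\[
S^* := S\cap\bigg(\conv\{v^1,\ldots,v^g\}+\{\mu_1 r^1+\cdots+\mu_h r^h:\; 0\le\mu_j\le M^*\text{ for all }j\in N_r\}\bigg),
\]
which is finite as it is the intersection of $\Z^n$ with a bounded set. The goal is to prove $\pp_{S,\Pi}=\pp_{S^*,\Pi}$, and then to conclude polyhedrality of $\pp_{S^*,\Pi}$ by partitioning $\Pi$ by ray-support and invoking Theorem~\ref{lemma:QcapPi}.

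The key step is to show that $\floor{\beta}_{S^*,\alpha}=\floor{\beta}_{S,\alpha}$ for every $(\alpha,\beta)\in\Pi$. Fix such $(\alpha,\beta)$ and let $z^*\in S$ attain $\alpha z^*=\floor{\beta}_{S,\alpha}$, so $\alpha z^*\le\beta$. Write $z^*=\sum_{i=1}^g\lambda_i v^i+\sum_{j=1}^h\mu_j r^j$ with $\lambda\ge\mathbf{0}$, $\mathbf{1}\lambda=1$, $\mu\ge\mathbf{0}$, and note that $\alpha=\lambda A$ with $\lambda\ge\mathbf{0}$ is nonnegative on $\mathcal{C}\supseteq\conv(S)$ by~\eqref{eq:coeff-conditions}. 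I would argue that if some $k$ satisfied both $\mu_k>M^*$ and $\alpha r^k>0$, then
\[
\alpha z^*\;\ge\;\mu_k\,\alpha r^k\;>\;M^*\,\alpha r^k\;\ge\;\beta,
\]
where the last inequality uses $\beta/\alpha r^k\le M^*$ from the definition of $\Pi$; this would contradict $\alpha z^*\le\beta$. Hence every $k$ with $\mu_k>M^*$ has $\alpha r^k=0$, so reducing those coefficients does not change $\alpha z^*$. To obtain an actual element of $S^*$ (which must be integer), I would iteratively replace $z^*$ by $z^*-N_k r^k$ for each such $k$, with $N_k=\lceil\mu_k-M^*\rceil$, so that the new coefficient $\mu_k-N_k$ lies in $[0,M^*]$ while the resulting point remains in $\Z^n$ (since $r^k\in\Z^n$) and in $\conv(S)$ (same decomposition with an admissible nonnegative coefficient). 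Iterating over all offending indices yields $\bar z\in S^*$ with $\alpha\bar z=\alpha z^*=\floor{\beta}_{S,\alpha}$, proving the desired equality and hence $\pp_{S,\Pi}=\pp_{S^*,\Pi}$.

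For the concluding step, partition $\Pi=\bigcup_{I\subseteq N_r}\Pi(I)$, where $\Pi(I):=\{(\alpha,\beta)\in\Pi:\;r\text{-supp}(\alpha)=I\}=\Pi_{\pp}\cap H(I)$ and
\[
H(I)=\bigg\{(\alpha,\beta)\in\R^n\times\R:\; \alpha r^j\ge 1\text{ for }j\in I,\ \alpha r^j=0\text{ for }j\in N_r\setminus I,\ M^*\alpha r^j\ge\beta\text{ for }j\in I\bigg\}.
\]
Exactly as in the covering case, $H(I)\subseteq\rec(H(I))$ because the only nonhomogeneous defining inequality, $\alpha r^j\ge 1$, is implied by (and thus dominates) its recession version $\alpha r^j\ge 0$. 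Theorem~\ref{lemma:QcapPi} applied with the finite set $S^*$ and $H=H(I)$ then gives that $\pp_{S^*,\Pi(I)}$ is a rational polyhedron, so the finite intersection $\pp_{S,\Pi}=\pp_{S^*,\Pi}=\bigcap_{I\subseteq N_r}\pp_{S^*,\Pi(I)}$ is also a rational polyhedron. The main anticipated obstacle is the integrality bookkeeping in the truncation step: unlike the covering case (where the contradiction with optimality of $z^*$ arises directly from clipping), here the clipped point must be coaxed to remain in $\Z^n$, which is the reason for subtracting integer multiples of the integer rays $r^k$ rather than the real coefficients $\mu_k$.
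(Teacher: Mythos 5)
Your proof follows the same structure as the paper's: construct the finite truncated set $S^*$, prove $\floor{\beta}_{S^*,\alpha}=\floor{\beta}_{S,\alpha}$ for all $(\alpha,\beta)\in\Pi$ by showing that rays with large coefficients contribute nothing to $\alpha z^*$, and then partition $\Pi$ by ray-support and apply Theorem~\ref{lemma:QcapPi}. In one respect you are actually more careful than the paper: the paper's proof clips the real coefficients $\mu_k$ down to $M^*$ and asserts the resulting $\bar z$ is in $S^*$, but such a $\bar z$ need not be a lattice point; your device of subtracting integer multiples $N_k r^k$ of the integral rays (with $N_k=\lceil\mu_k-M^*\rceil$, which keeps the new coefficient in $[0,M^*]$) ensures the clipped point stays in $\Z^n\cap\conv(S)=S$ and hence genuinely lies in $S^*$, which is what the argument needs. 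The concluding partition into $\Pi(I)=\Pi_{\pp}\cap H(I)$ and the check that $H(I)\subseteq\rec(H(I))$ match the paper verbatim.
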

\begin{proof}
The proof is very similar to that of Lemma~\ref{lemma:coverMstar}. 
Let $S^*$ be a finite subset of $S$ defined as $$S^*:=S\cap(\conv\left\{v^1,\ldots,v^{g}\right\}+\left\{\mu_1r^1+\cdots+\mu_{h}r^{h}:0\leq \mu_j\leq M^*\text{ for }j\in N_r\right\}).$$ As $S^*\subseteq S$, $\pp_{S^*,\Pi}\subseteq \pp_{S,\Pi}$. To show that $\pp_{S^*,\Pi}=\pp_{S,\Pi}$, we will argue that $\floor{\beta}_{S^*,\alpha}=\floor{\beta}_{S,\alpha}$ for every $(\alpha,\beta)\in\Pi$. To this end, take an $(\alpha,\beta)\in\Pi$. 
Let $z^*\in S$ be such that $\alpha z^*=\floor{\beta}_{S,\alpha}$.  As $z^*\in S\subseteq \conv(S)$, for some $\mu\geq\mathbf{0}$ and $\lambda\geq\mathbf{0}$ with $\mathbf{1}\lambda=1$, we have $z^*=\lambda_1v^1+\cdots+\lambda_{g}v^{g}+\mu_1r^1+\cdots+\mu_{h}r^{h}$. If $z^*\in S^*$, then $\alpha z^*=\floor{\beta}_{S^*,\alpha}$, so $\floor{\beta}_{S^*,\alpha}=\floor{\beta}_{S,\alpha}$. Thus we may assume that $z^*\not\in S^*$, and therefore, there exists $j\in N_r=\{1,\ldots,h\}$ with $\mu_j>M^*$. Let $\bar \mu$ be obtained from $\mu$ after reducing all coordinates of $\mu$ greater than $M^*$ to $M^*$. Let $\bar z$ be defined as $\bar z=\lambda_1v^1+\cdots+\lambda_{g}v^{g}+\bar \mu_1r^1+\cdots+\bar \mu_{h}r^{h}$. By definition, $\bar z\in S^*$. As in the proof of Lemma~\ref{lemma:coverMstar}, it can be shown that $\alpha z^*=\alpha \bar z$, implying in turn that $\floor{\beta}_{S^*,\alpha}=\floor{\beta}_{S,\alpha}$ and that $\pp_{S^*,\Pi}=\pp_{S,\Pi}$.

Notice that $\Pi=\bigcup_{I\subseteq  N_r}\Pi(I)$
where $\Pi(I)=\left\{(\alpha,\beta)\in\Pi:\;r\text{-supp}(\alpha)=I\right\}$ and that $\Pi(I)=\Pi_{\pp}\cap H(I)$ where
\[
H(I)=\left\{(\alpha,\beta)\in\mathbb{R}^{n}\times\mathbb{R}:\;
\begin{array}{l}
\alpha r^j\geq 1\text{ for }j\in I,\ \ \alpha r^j=0\text{ for }j\in  N_r\setminus I,\\M^*\alpha r^j\geq\beta\text{ for }j\in I
\end{array}\right\}.
\]
As $H(I)\subseteq \rec(H(I))$, Theorem~\ref{lemma:QcapPi} implies that $\pp_{S^*,\Pi(I)}$ is a rational polyhedron. So, as $\pp_{S^*,\Pi}=\bigcap_{I\subseteq N_r}\pp_{S^*,\Pi(I)}$, the proof is complete.
\ifx\flagJournal\true \qed \fi
\end{proof}

The following lemma is the analogue of Lemma~\ref{LE:basics} for $\pc$, whose proof is almost identical to that of Lemma~\ref{LE:basics}. The difference is that here we consider inequalities of the form $\lambda A x\leq \lambda b$ and we decrease the right-hand side to obtain an $S$-CG cut, and as a result, we focus on integer points ``below" the hyperplane $\{x\in\R^n:\lambda Ax=\lambda b\}$, i.e., integer points $z$ such that $\lambda A z\leq \lambda b$. This lets us not assume that the extreme points of $\pp$ are contained in $\conv(S)$. Given $\lambda\in\mathbb{R}_+^m\setminus\{\mathbf{0}\}$, as in Definition~\ref{DE:maindef}, we can define the tilting ratio of $\lambda$ with respect to $A$, and we denote it by $r(\lambda, A)$. In addition, we define $B,D$, $M_i$ for $i\in\{1,\ldots,m-1\}$, $M$, and $M^*$ as in Definition~\ref{defM}.

\begin{lemma}\label{LE:pbasics}
Let $\lambda \in \R^m_+\setminus\{\mathbf{0}\}$ be such that $(\lambda A,\lambda b)\in  \Pi_{\pp}$. If $r(\lambda, A) > M$, then there exists $\mu \in \R^m_+\setminus\{\mathbf{0}\}$ that satisfies the following: (i)  $\|\mu\|_1\leq \|\lambda\|_1-1$,  (ii) $(\mu A,\mu b)\in  \Pi_{\pp}$, 
and (iii) $\mu A x\leq \floor{\mu b}_{S,\mu A}$ dominates $\lambda A x\leq \floor{\lambda b}_{S,\lambda A}$.
\end{lemma}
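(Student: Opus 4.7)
The plan is to mirror the proof of Lemma~\ref{LE:basics} essentially line by line, making only the adjustments required by the reversed inequality direction and the rounding-down operator $\floor{\cdot}_{S,\cdot}$. After relabeling rows so that $\lambda_1\ge\cdots\ge\lambda_m$, I would introduce $t=t(\lambda,A)$, the quantity $\Delta$ and the index $k$ as in~\eqref{eq:delta}--\eqref{eq:k}, choose $\ell$ by~\eqref{eq:ell}, invoke Theorem~\ref{apx} to produce positive integers $p_1,\ldots,p_\ell$ with errors $\varepsilon_i$, and define $\mu_i=\lambda_i-p_i\Delta$ for $i\le\ell$ and $\mu_i=\lambda_i$ otherwise. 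Property (i) follows directly from $p_1\ge 1$, and Claim~1 of Lemma~\ref{LE:basics} transfers verbatim to give $\mu\ge\mathbf{0}$, $\supp(\mu)=\supp(\lambda)$, and hence $r\text{-supp}(\mu A)=r\text{-supp}(\lambda A)$. For (ii), complementary slackness is applied at an optimal $x^*\in\pp$ for $\max\{\lambda Ax:x\in\pp\}$: the equalities $a_ix^*=b_i$ on $\supp(\lambda)\supseteq\supp(\mu)$ give $\mu Ax^*=\mu b$, and $\mu Ax\le\mu b$ for all $x\in\pp$ (from $\mu\ge\mathbf{0}$ and $Ax\le b$) shows that $\mu b$ is the maximum.

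The main technical step is the packing analogue of Claim~3, with the slab redefined as $\Theta=\{x\in\mathcal{C}:\mu b-\Delta\le\mu Ax\le\mu b\}$: I would argue that no $x\in\Theta$ satisfies $\sum_{i\le\ell}p_ia_ix\ge 1+\sum_{i\le\ell}p_ib_i$. As before, $v=(\mu b/\mu Ar^k)r^k$ lies in $\Theta$ with $\mu Av=\mu b$ and $\sum_{i\le\ell}p_ia_iv=0$, so a convex combination $\bar x$ of a hypothetical violator $\tilde x$ and $v$ satisfying $\sum_{i\le\ell}p_i(a_i\bar x-b_i)=1$ remains in $\Theta$. Now instead of using $\mu A\bar x\le\mu b+\Delta$ as in Claim~3 of Lemma~\ref{LE:basics}, I would use the tighter upper bound $\mu A\bar x\le\mu b$ defining $\Theta$, which yields $\sum_{i\le\ell}\mu_i(a_i\bar x-b_i)\le\sum_{j>\ell}\mu_jb_j$ and hence
\[\sum_{i=1}^{\ell}\lambda_i(a_i\bar x-b_i)\le\Delta+\sum_{j=\ell+1}^{m}\mu_jb_j\le\lambda_{\ell+1}(D+mB)\le\lambda_{\ell+1}M_1/2.\]
The left-hand side is then expanded through $\lambda_i/\lambda_\ell=p_i/p_\ell+\varepsilon_i/p_\ell$ and bounded below by $\lambda_\ell/(2p_\ell)$ using the $\varepsilon$-estimate $-\varepsilon\sum_{i\le\ell}(a_i\bar x+b_i)\ge-1/2$ established in Claim~3 of \cite[Lemma~4.10]{Dash19}. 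The resulting inequality $\lambda_\ell\le p_\ell\lambda_{\ell+1}M_1\le M_\ell\lambda_{\ell+1}$ then contradicts~\eqref{eq:ell} exactly as in the covering case.

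For the dominance assertion (iii) I would first prove the packing counterpart of~\eqref{eq:claim6}, namely $\mu b-\Delta<\floor{\mu b}_{S,\mu A}\le\mu b$. The upper bound is immediate; the lower bound, assuming the $\mu$-cut is nontrivial (otherwise it is $\mathbf{0}x\le-1$ and trivially dominates), is obtained by taking any $z_0\in S$ with $\mu Az_0\le\mu b$ and setting $z''=z_0+\floor{(\mu b-\mu Az_0)/\mu Ar^j} r^j$ with $j$ chosen so that $\lambda Ar^j=\Delta$, giving $z''\in S$ and $\mu Az''\in(\mu b-\mu Ar^j,\mu b]\subseteq(\mu b-\Delta,\mu b]$. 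Then, for $z\in S$ attaining $\mu Az=\floor{\mu b}_{S,\mu A}$, we have $z\in\Theta$, so the Claim~3 analogue above forces the integer $\sum_{i\le\ell}p_ia_iz$ to be at most $\sum_{i\le\ell}p_ib_i$, and $f:=\sum_{i\le\ell}p_i(b_i-a_iz)$ is a nonnegative integer. Then $z+fr^j\in S$, and a direct calculation using $\lambda Ar^j=\Delta$ gives $\lambda A(z+fr^j)=\floor{\mu b}_{S,\mu A}+\Delta\sum_{i\le\ell}p_ib_i\le\mu b+\Delta\sum_{i\le\ell}p_ib_i=\lambda b$. Hence $\floor{\lambda b}_{S,\lambda A}\ge\floor{\mu b}_{S,\mu A}+\Delta\sum_{i\le\ell}p_ib_i$, which is precisely the statement that $\mu Ax\le\floor{\mu b}_{S,\mu A}$ together with the nonnegative combination $\Delta\sum_{i\le\ell}p_i(a_ix-b_i)\le 0$ of inequalities of $Ax\le b$ implies $\lambda Ax\le\floor{\lambda b}_{S,\lambda A}$.

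The only genuinely new wrinkle, and the step I expect to be the main obstacle, is the lower bound $\floor{\mu b}_{S,\mu A}>\mu b-\Delta$: unlike in Lemma~\ref{LE:basics}, we cannot appeal to a hypothesis that the extreme points of $\pp$ lie in $\conv(S)$. The shift-along-$r^j$ trick described above is the substitute, and it relies only on the facts that $r^j$ is an extreme ray of $\rec(\conv(S))$, that $\mu Ar^j>0$ by $\supp(\mu)=\supp(\lambda)$, and that $\mu Ar^j\le\lambda Ar^j=\Delta$. Everything else is a mechanical direction-reversal of the covering argument.
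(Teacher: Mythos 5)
Your proposal is correct and follows essentially the same route as the paper: relabel, choose $t,\Delta,k,\ell$, apply Dirichlet, subtract $p_i\Delta$ to form $\mu$, redefine the slab as $\Theta=\{x\in\mathcal{C}:\mu b-\Delta\leq\mu Ax\leq\mu b\}$, repeat the Claim-3 contradiction (the paper does this by adding a spurious $+\Delta$ to recover inequality~\eqref{eq:5star} verbatim, while you carry the tighter bound through directly -- both reach the same contradiction with~\eqref{eq:ell}), establish $\mu b-\Delta\leq\floor{\mu b}_{S,\mu A}\leq\mu b$ via a shift along the ray $r^j$ with $\lambda Ar^j=\Delta$, and then show dominance via $z+fr^j$. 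Your derivation of the lower bound on the floor (pushing an arbitrary $z_0\in S$ up along $r^j$) is a cosmetic rearrangement of the paper's argument (which pushes the optimizer $z$ up by a single $r^j$); both rely on exactly the same facts, so this is not the ``new wrinkle'' you suspected, and your explicit handling of the degenerate $\mathbf{0}x\leq-1$ case is a small but harmless extra care the paper leaves implicit.
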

\begin{proof}
After relabeling the rows of $Ax\leq b$, we may assume that $\lambda_1\geq\cdots\geq\lambda_m$. Let $t(\lambda,A)$ be defined as in Definition~\ref{DE:maindef}, and let $t$ stand for $t(\lambda,A)$. If $t=1$, we have $r(\lambda,A)=1\leq M$, a contradiction to our assumption. So, $t\geq 2$, which implies that $m\geq 2$. Let $\Delta$ and $k$ be defined as in~\eqref{eq:delta} and~\eqref{eq:k}. As $r\text{-supp}\left(\lambda A\right)\setminus\bigcup_{i=1}^{t-1}r\text{-supp}(a_i)$ is not empty, it follows that $k$ is a well-defined index. Moreover, as $r(\lambda, A) > M_1 \times \cdots \times M_{m-1}$, there exists some $\ell\in \{1, \ldots, t-1\}$ such that~\eqref{eq:ell} is satisfied. As in the proof of Lemma~\ref{LE:basics}, we now construct another multiplier $\mu\in\mathbb{R}^m$. We follow the same route of subtracting a vector that approximates large components of $\lambda$ to construct $\mu$.

Let us first consider the $\ell\geq 2$ case. By Theorem~\ref{apx} (with $k = \ell-1$ and $r_i = \lambda_i/\lambda_\ell$ for $i\in\{1,\ldots,k\}$), there exist positive integers $p_1,\ldots,p_\ell$ that satisfy \eqref{eq:Diophantine-approx}. $\mu$ is defined as follows:
\begin{equation}\label{pdefmu}
\mu_i = \left\{\begin{array}{ll} \lambda_i - p_i \Delta & \text{ for } i=1, \ldots, \ell,\\
\lambda_i & \text{ otherwise}
\end{array}\right.
\end{equation}
Even for the case $\ell=1$, let $\mu$ be defined as in~\eqref{pdefmu} with $p_1=1$. As before, we can show that $\mu\geq\mathbf{0}$, $\supp(\mu)=\supp(\lambda)$ and $\mu b=\max\left\{\mu Ax:x\in\pp\right\}$ and therefore $(\mu A,\mu b)\in\Pi_{\pp}$. Moreover, it follows from $\mu\geq\mathbf{0}$ and~\eqref{pdefmu} that $\|\mu\|_1\leq \|\lambda\|_1-1$.
	
We next define $\Theta := \left\{x \in\mathcal{C}:\mu b-\Delta\leq \mu A x \leq \mu b\right\}$ and show that there is no point $x\in \Theta$ that satisfies
\begin{equation}\label{pineqcontra}
\sum_{i=1}^{\ell} p_ia_ix \geq 1+ \sum_{i=1}^{\ell} p_ib_i.
\end{equation}
Note that this $\Theta$ is defined differently than the one defined in Claim \ref{claim5} of Lemma~\ref{LE:basics}. Now $\Theta$ collects $x$ satisfying $\mu b-\Delta\leq \mu A x \leq \mu b$ instead of $\mu b\leq \mu A x \leq \mu b+\Delta$. Suppose for a contradiction that there exists $\tilde x \in \Theta$ satisfying~\eqref{pineqcontra}. Taking a convex combination of $\tilde x$ with the point $v=\frac{\mu b}{\mu A r^k}r^k\in \Theta$, we can construct $\bar x \in \Theta$ such that $\sum_{i=1}^\ell p_ia_i \bar x=1+\sum_{i=1}^\ell p_ib_i$. As $\bar x\in \Theta$, we have $\mu A\bar x\leq \mu b$, which can be rewritten as $\sum_{i=1}^\ell\mu_i(a_i\bar x-b_i) \leq -\sum_{j=\ell+1}^m\mu_j(a_j\bar x-b_j)$. As $\Delta>0$, it follows that
\begin{equation}\label{eq:pstar}
\sum_{i=1}^\ell\mu_i(a_i\bar x-b_i) \leq -\sum_{j=\ell+1}^m\mu_j(a_j\bar x-b_j) + \Delta.
\end{equation}
Note that inequality  \eqref{eq:pstar} is the same as \eqref{eq:5star}. The same argument used for proving Claim~\ref{claim5} of Lemma~\ref{LE:basics} can be repeated to obtain the desired contradiction.

Finally, to show that $\lambda A x\leq \floor{\lambda b}_{S,\lambda A}$ is implied by $\mu A x\leq \floor{\mu b}_{S,\mu A}$ and the inequalities in $Ax\leq b$, we first show that
\begin{equation}\label{eq:6claim}
\mu b-\Delta\leq \floor{\mu b}_{S,\mu A}\leq \mu b
\end{equation} holds. Set $(\alpha,\beta)=(\mu A,\mu b)$. There exists $z\in S$ such that $\alpha z=\floor{\beta}_{S,\alpha}$. Recall that by~\eqref{eq:delta}, $\Delta=\min\{\lambda Ar^j:j\in r\text{-supp}(\lambda A)\}$, and let $j$ be such that $\lambda A r^j=\Delta$. As $z+r^j\in S$ and $\alpha z=\floor{\beta}_{S,\alpha}$, it follows that $\alpha(z+r^j)=\floor{\beta}_{S,\alpha}+\alpha r^j>\floor{\beta}_{S,\alpha}$. That means $\alpha (z+r^j)>\beta$. Hence, we obtain $\floor{\beta}_{S,\alpha}+\alpha r^j>\beta$, which implies that $\floor{\beta}_{S,\alpha}\geq \beta-\alpha r^j\geq \beta-\Delta$, as required.

There exists $z\in S$ such that $\mu A z=\floor{\mu b}_{S,\mu A}$, and \eqref{eq:6claim} implies that $\mu b-\Delta\leq \mu Az\leq \mu b$. Since we have shown that there is no point $x\in \Theta$ satisfying~\eqref{pineqcontra}, it follows that $\sum_{i=1}^\ell p_ia_i z = \sum_{i=1}^\ell p_ib_i - f$ for some integer $f \in \left[0,\sum_{i=1}^\ell p_ib_i\right]$. Note that $\lambda A\left(z+fr^j\right)=\left(\mu A+\Delta\sum_{i=1}^\ell p_ia_i\right)z+\Delta\sum_{i=1}^\ell p_i(b_i - a_iz)=\floor{\mu b}_{S,\mu A}+\Delta\sum_{i=1}^\ell p_ib_i$. Since $\floor{\mu b}_{S,\mu A} \leq \mu b$, we must have $\floor{\mu b}_{S,\mu A}+\Delta\sum_{i=1}^\ell p_ib_i\leq\mu b+\Delta\sum_{i=1}^\ell p_ib_i=\lambda b$. Then $\floor{\mu b}_{S,\mu A}+\Delta\sum_{i=1}^\ell p_ib_i\leq \floor{\lambda b}_{S,\lambda A}$. So, the inequality $\lambda A x\leq \floor{\lambda b}_{S,\lambda A}$ is dominated by $\mu A x\leq \floor{\mu b}_{S,\mu A}$, as the former is implied by the latter and a nonnegative combination of the inequalities in $Ax\leq b$, as required.
\ifx\flagJournal\true \qed \fi
\end{proof}

Using Lemmas~\ref{lemma:packMstar} and~\ref{LE:pbasics}, we next prove that $\pp_S$ is a rational polyhedron.
\begin{theorem}\label{thm:pack} 
Let $\Pi=\{(\alpha,\beta)\in\Pi_{\pp}: {\beta}/{\alpha r^j}\leq M^*\text{ for all $j\in r\text{-supp}(\alpha)$}\}$ where $M^*=mBM$. Then $\pp_S=\pp_{S,\Pi}$, and $\pp_S$ is a rational polyhedron.
\end{theorem}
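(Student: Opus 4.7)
The plan is to mirror the proof of Theorem~\ref{thm:cov} almost verbatim, relying on the two machinery lemmas already established for the packing setting: Lemma~\ref{lemma:packMstar}, which shows $\pp_{S,\Pi}$ is a rational polyhedron when $\Pi$ restricts $\Pi_{\pp}$ to inequalities with bounded intercepts, and Lemma~\ref{LE:pbasics}, which provides a domination move that strictly decreases $\|\lambda\|_1$ whenever the tilting ratio $r(\lambda,A)$ exceeds $M$. The inclusion $\pp_S \subseteq \pp_{S,\Pi}$ is immediate since $\Pi \subseteq \Pi_{\pp}$. The entire content of the proof is to show $\pp_{S,\Pi} \subseteq \pp_S$, i.e., that every $S$-CG cut arising from some $(\lambda A,\lambda b)\in\Pi_{\pp}$ is dominated on $\pp$ by the $S$-CG cut arising from some $(\hat\mu A,\hat\mu b)\in \Pi$.

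Fix $\lambda\in\mathbb{R}_+^m\setminus\{\mathbf{0}\}$ with $(\lambda A,\lambda b)\in\Pi_{\pp}$, and set $(\alpha,\beta)=(\lambda A,\lambda b)$. If $\beta/\alpha r^j\leq M^*$ for all $j\in r\text{-supp}(\alpha)$, we are done. Otherwise, the chain~\eqref{eq:bounding} applies verbatim in the packing case, because the conditions $b\geq \mathbf{0}$ and $a_ir^j\geq 0$ from~\eqref{eq:coeff-conditions} make all quantities nonnegative and because $\sum_{i=1}^{t}a_ir^j\geq 1$ is a positive integer whenever $j\in r\text{-supp}(\lambda A)$. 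Thus from $\beta/\alpha r^j > M^*$ we obtain $M^* < mB\cdot r(\lambda,A)$, and since $M^*=mBM$, we conclude $r(\lambda,A)>M$. Lemma~\ref{LE:pbasics} then supplies $\mu\in\mathbb{R}_+^m\setminus\{\mathbf{0}\}$ with $\|\mu\|_1\leq\|\lambda\|_1-1$, $(\mu A,\mu b)\in\Pi_{\pp}$, and such that $\mu A x\leq \floor{\mu b}_{S,\mu A}$ dominates $\lambda A x\leq \floor{\lambda b}_{S,\lambda A}$.

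Iterate: if the newly obtained multiplier still has an intercept exceeding $M^*$, apply Lemma~\ref{LE:pbasics} again. Each round strictly decreases the $\ell_1$ norm by at least $1$, so after at most $\lceil\|\lambda\|_1\rceil$ iterations we reach some $\hat\mu\in\mathbb{R}_+^m\setminus\{\mathbf{0}\}$ whose tilting ratio satisfies $r(\hat\mu,A)\leq M$. By the contrapositive of the argument above, all intercepts of $(\hat\mu A,\hat\mu b)$ are at most $M^*$, so $(\hat\mu A,\hat\mu b)\in\Pi$, and by transitivity of the domination relation, the $S$-CG cut from $\hat\mu$ dominates the one from $\lambda$ on $\pp$. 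This proves $\pp_S=\pp_{S,\Pi}$, and Lemma~\ref{lemma:packMstar} then delivers the conclusion that $\pp_S$ is a rational polyhedron. There is no real obstacle: all technical heavy lifting is done in Lemmas~\ref{lemma:packMstar} and~\ref{LE:pbasics}, and the only thing to verify at this level is that the intercept-versus-tilting-ratio estimate~\eqref{eq:bounding} carries over unchanged to the packing side, which it does because of the sign conditions in~\eqref{eq:coeff-conditions}.
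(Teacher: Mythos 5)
Your proposal is correct and follows essentially the same route as the paper's own proof of Theorem~\ref{thm:pack}: reduce to bounded intercepts via the chain~\eqref{eq:bounding}, invoke Lemma~\ref{LE:pbasics} to construct a dominating multiplier with strictly smaller $\ell_1$ norm whenever the tilting ratio exceeds $M$, iterate until a member of $\Pi$ is reached, and then appeal to Lemma~\ref{lemma:packMstar}. The only cosmetic difference is that you spell out why~\eqref{eq:bounding} carries over to the packing setting (the sign conditions in~\eqref{eq:coeff-conditions}), which the paper leaves implicit by referring back to Theorem~\ref{thm:cov}.
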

\begin{proof}
Recall that $\pp_S=\pp_{S,\Pi_{\pp}}$ by~\eqref{eq:pip}. As $\Pi\subseteq\Pi_{\pp}$, we have $\pp_{S,\Pi_{\pp}}\subseteq \pp_{S,\Pi}$. To show that $\pp_{S,\Pi_{\pp}}=\pp_{S,\Pi}$, we argue that for each $(\alpha,\beta)\in \Pi_{\pp}$ there is an $(\alpha',\beta')\in \Pi$ such that the \scg derived from  $(\alpha',\beta')$ dominates the \scg derived from  $(\alpha,\beta)$ on $\pp$.
	
Let $\lambda\in\mathbb{R}_+^m\setminus \{\mathbf{0}\}$ be such that  $(\lambda A,\lambda b)\in \Pi_{\pp}$ and let $(\alpha,\beta) = (\lambda A,\lambda b)$.
If ${\beta}/{\alpha r^j}\leq M^*$   for all $j\in r\text{-supp}(\alpha)$, then $(\alpha,\beta)\in\Pi$ as desired.
Otherwise, consider an arbitrary  $j\in r\text{-supp}(\alpha)$ such that ${\beta}/{\alpha r^j} >  M^*$. As we argued in the proof of Theorem~\ref{thm:cov}, it can be shown that $M^*<mBr(\lambda,A)$. As $M^* = mBM$, we have  $r(\lambda, A) > M$. So, by Lemma~\ref{LE:pbasics}, there exists a $\mu \in \R^m_+\setminus \{\mathbf{0}\}$ such that (i)  $\|\mu\|_1\leq \|\lambda\|_1-1$,  (ii) $(\mu A,\mu b)\in  \Pi_{\pp}$, and, (iii) $\mu A x\leq \floor{\mu b}_{S,\mu A}$ dominates $\lambda A x\leq \floor{\lambda b}_{S,\lambda A}$. As we argued in the proof of Theorem~\ref{thm:cov}, after repeating this process for at most $\|\lambda\|_1$ iterations, we may assume that $r(\mu, A) \leq M$ and  $(\mu A,\mu b)\in \Pi$. Since the \scg  generated by $\mu$ dominates the \scg  generated by $\lambda$ for $\pp$, it follows that $\pp_S=\pp_{S,\Pi}$. Since $\pp_{S,\Pi}$ is a rational polyhedron by Lemma~\ref{lemma:packMstar}, it follows that $\pp_S$ is a rational polyhedron, as required.
\ifx\flagJournal\true \qed \fi
\end{proof}

\subsection{General pointed polyhedra}\label{sec:proof}

\newcommand {\qc}{{Q^{\uparrow}}}
\newcommand {\qp}{{Q^{\downarrow}}}

By Theorems~\ref{thm:cov} and~\ref{thm:pack}, we now know that Theorem~\ref{quote} holds. Having proved Theorem~\ref{quote}, we are very close to finishing the proof of the main result of this section that when $S$ is the set of integer points in a rational pointed polyhedron, the \scgc of any rational polyhedron $P\subseteq\conv(S)$ is a rational polyhedron. 

Here comes a brief outline of our proof. We will first show that the following lemma holds, based on Theorem~\ref{quote}.

\begin{lemma}\label{lemma:double-star}
Let $T\subseteq \mathbb{Z}^n$ be such that $\conv(T)\cap\mathbb{Z}^n=T$ and $\conv(T)$ is of the form~\eqref{eq:S_pointed} satisfying~\eqref{eq:S_pointed_assumption}. Let $Q\subseteq \conv(T)$ be a rational polyhedron, and let $\Pi_Q^{+},\Pi_Q^{-}$ be defined as follows:
	\begin{align}\label{eq:foo}
	\begin{aligned}
	\Pi_Q^{+}&=\left\{(\alpha,\beta)\in\Pi_Q:
	\alpha y\geq 0,\ \text{for } y\in\left\{e^1,\ldots,e^{n_1},r^1,\ldots,r^h\right\}\right\},\\
	\Pi_Q^{-}& =\left\{(\alpha,\beta)\in\Pi_Q:
	\alpha y\leq 0,\ \text{for~} y\in\left\{e^1,\ldots,e^{n_1},r^1,\ldots,r^h\right\}
	\right\}.
	\end{aligned}
	\end{align}
	Then both $Q_{T,\Pi_Q^{+}}$ and $Q_{T,\Pi_Q^{-}}$ are rational polyhedra.
\end{lemma}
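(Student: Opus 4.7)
My plan is to reduce each half of the lemma to Theorem~\ref{quote} by rewriting the two closures as $T$-CG closures of auxiliary polyhedra. Specifically, I aim to establish the identifications
\[
Q_{T,\Pi_Q^{+}} \;=\; (Q-\mathcal{C})_T \quad\text{and}\quad Q_{T,\Pi_Q^{-}} \;=\; (Q+\mathcal{C})_T,
\]
where the second closure is read in the $\geq$-form $\pc$ sense of Section~\ref{sec:cov}. Since $Q-\mathcal{C}$ will be of the packing form and $Q+\mathcal{C}$ of the covering form, Theorems~\ref{thm:pack} and~\ref{thm:cov} will close the argument once these identifications and the accompanying coefficient conditions~\eqref{eq:coeff-conditions} are in place.

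First, for the packing side, I would observe that $\alpha y\geq 0$ on the generators $e^1,\ldots,e^{n_1},r^1,\ldots,r^h$ is equivalent to $\alpha y\geq 0$ on all of $\mathcal{C}$, which in turn is exactly the condition for $\alpha x\leq\beta$ to be valid on $Q-\mathcal{C}$; under this condition $\max\{\alpha x:x\in Q-\mathcal{C}\}=\max\{\alpha x:x\in Q\}$, so the supporting-hyperplane constraint agrees on the two sets. This gives $\Pi_Q^{+}=\Pi_{Q-\mathcal{C}}$, and hence $Q_{T,\Pi_Q^{+}}=(Q-\mathcal{C})_T$ as the two sides are the same intersection of cuts. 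Next I would check that $Q-\mathcal{C}$ admits an integer description $Ax\leq b$ satisfying~\eqref{eq:coeff-conditions}: the row-sign conditions follow from $-\mathcal{C}\subseteq\rec(Q-\mathcal{C})$, and $b\geq\mathbf{0}$ follows because $Q\subseteq\conv(T)\subseteq\mathcal{C}$ forces $a_ix\geq 0$ on every $x\in Q$. Theorem~\ref{thm:pack} applied to $Q-\mathcal{C}$ then shows that $(Q-\mathcal{C})_T$ is a rational polyhedron.

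For the covering side, I would flip signs: the map $(\alpha,\beta)\mapsto(-\alpha,-\beta)$ sends $\Pi_Q^{-}$ bijectively onto the set of $\geq$-form integer supporting inequalities of $Q+\mathcal{C}$, and the cut $\alpha x\leq\floor{\beta}_{T,\alpha}$ defines the same halfspace as $-\alpha x\geq\ceil{-\beta}_{T,-\alpha}$, since $\ceil{-\beta}_{T,-\alpha}=-\floor{\beta}_{T,\alpha}$. The polyhedron $Q+\mathcal{C}$ is pointed (its recession cone $\mathcal{C}$ is pointed), is contained in $\mathcal{C}$, has all its extreme points in $Q\subseteq\conv(T)$ (any extreme point of $Q+\mathcal{C}$ is already an extreme point of $Q$), and admits an integer description $Ax\geq b$ satisfying~\eqref{eq:coeff-conditions}. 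Theorem~\ref{thm:cov} then yields polyhedrality of the $\pc$-closure of $Q+\mathcal{C}$, which coincides with $Q_{T,\Pi_Q^{-}}$. The main obstacle I anticipate is nothing deep, only the careful matching of sign conventions between the $\pp$ and $\pc$ formulations and the verification of~\eqref{eq:coeff-conditions} for the two auxiliary polyhedra; once these are handled, Theorems~\ref{thm:pack} and~\ref{thm:cov} close the argument.
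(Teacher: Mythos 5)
Your proposal is correct and takes essentially the same approach as the paper: form the auxiliary polyhedra $Q-\mathcal{C}$ and $Q+\mathcal{C}$, show $\Pi_Q^{+}=\Pi_{Q-\mathcal{C}}$ and (after the sign flip) $\Pi_Q^{-}$ corresponds to $\Pi_{Q+\mathcal{C}}$, verify that the two satisfy the hypotheses of Theorem~\ref{quote}, and invoke Theorems~\ref{thm:pack} and~\ref{thm:cov}. The only cosmetic difference is that the paper records the identification as $Q_{T,\Pi_Q^{\pm}}=Q\cap (Q\mp\mathcal{C})_T$ while you write $Q_{T,\Pi_Q^{\pm}}=(Q\mp\mathcal{C})_T$ directly; both immediately yield rational polyhedrality.
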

Before we prove this lemma, we highlight its connection to Theorem~\ref{quote}. If $T\subseteq\mathbb{Z}^n$ and $Q\subseteq \conv(T)$ satisfy the conditions of Lemma~\ref{lemma:double-star}, then $\qc:=Q+\mathcal{C}$ and $\qp:=Q-\mathcal{C}$ satisfy the conditions of Theorem~\ref{quote} (we will argue this formally later). Then, by Theorems~\ref{thm:cov} and~\ref{thm:pack}, $\qc_T$ and $\qp_T$ are rational polyhedra, based on which we will argue that $Q_{T,\Pi_Q^{+}}$ and $Q_{T,\Pi_Q^{-}}$ are rational polyhedra to complete the proof of Lemma~\ref{lemma:double-star}.

After proving Lemma~\ref{lemma:double-star}, the remaining part is basically to reduce the general setting to the case in Lemma~\ref{lemma:double-star}, thereby justifying that it is enough to consider the case in Lemma~\ref{lemma:double-star}. Note that $\Pi_Q^{+}$ and $\Pi_Q^{-}$ focus on inequalities $\alpha x\leq \beta$ where the signs of $\alpha e^1,\ldots, \alpha e^{n_1}$ and the signs of $\alpha r^1,\ldots, \alpha r^h$ are uniform. We will argue this in Lemmas~\ref{LE:partition} and~\ref{reduction}.

\begin{proof-lemma}{\rm
	Let $\qc$ and $\qp$ be defined as $\qc:=Q+\mathcal{C}$ and $\qp:=Q-\mathcal{C}$, respectively. By~\eqref{eq:S_pointed_assumption} and $Q\subseteq \conv(T)\subseteq \mathcal{C}$, it follows that $\qc$ is pointed and the extreme points of $\qc$ are contained in $\conv(T)$. Moreover, $\qc$ and $\qp$ can be written as $\qc=\left\{x\in\mathbb{R}^n:\;Ax\geq b\right\}$ and $\qp=\left\{x\in\mathbb{R}^n:\;Cx\leq d\right\}$
	where $A,b,C,d$ are matrices satisfying
	\begin{align}
	&Ax\geq\mathbf{0}\text{ for all }x\in \left\{e^1,\ldots,e^{n_1},r^1,\ldots,r^{h}\right\}\quad\text{and}\quad b\geq\mathbf{0},\label{cov-noneg}\\
	&Cx\geq\mathbf{0}\text{ for all }x\in \left\{e^1,\ldots,e^{n_1},r^1,\ldots,r^{h}\right\}\quad\text{and}\quad d\geq\mathbf{0}.\label{pack-noneg}
	\end{align}
	We first claim that $\qc_{T}\cap Q=Q_{T,\Pi_Q^-}$.We will show that $\Pi_Q^-=\Gamma$ where
	\[
	\Gamma=\left\{ (-\lambda A,-\lambda b) \in \Z^n\times\R:\;\lambda\in\mathbb{R}_+^m,\;{\lambda b}= \min\{\lambda A x : x \in \qc\}\right\}
	\]
	Let $(-\alpha,-\beta)\in \Gamma$. Then $\alpha x\geq \beta$ is a valid inequality for $\qc$. By~\eqref{cov-noneg}, it follows that $\alpha x\geq0$ for $x\in\left\{e^1,\ldots,e^{n_1},r^1,\ldots,r^{h}\right\}$, so $\min\{\alpha x : x \in \qc\}=\min\{\alpha x : x \in Q\}$. Then $-\beta=\max\{-\alpha x : x \in Q\}$, so $(-\alpha,-\beta)\in \Pi_Q^-$. Conversely, take $(-\alpha,-\beta)\in \Pi_Q^-$. Then $-\beta =\max\left\{-\alpha x:x\in Q\right\}$, so $\beta =\min\left\{\alpha x:x\in Q\right\}$. As $\alpha x\geq0$ for $x\in\left\{e^1,\ldots,e^{n_1},r^1,\ldots,r^{h}\right\}$, it follows that $\min\left\{\alpha x:x\in Q\right\}=\min\left\{\alpha x:x\in \qc\right\}$, and therefore, $(\alpha,\beta)=(\lambda A,\lambda b)$ for some $\lambda\in\mathbb{R}_+^m$ and $(-\alpha,-\beta)\in \Gamma$. Therefore, as $\Pi_Q^-=\Gamma$, we have $Q_{T,\Pi_Q^-}=\left\{x\in Q:\;\alpha x\geq \ceil{\beta}_{T,\alpha}\;\forall (-\alpha,-\beta)\in \Gamma\right\}=Q\cap \qc_{T}$, as required.
	
	Similarly, we claim that $\qp_{T}\cap Q=Q_{T,\Pi_Q^+}$. We will show that $\Pi_{\qp}=\Pi_Q^+$. Let $(\alpha,\beta)\in \Pi_{\qp}$.	Then $\alpha x\leq \beta$ is a valid inequality for $\qp$. By~\eqref{pack-noneg}, it follows that $\alpha x\geq0$ for $x\in\left\{e^1,\ldots,e^{n_1},r^1,\ldots,r^{h}\right\}$, which means that $\max\{\alpha x : x \in \qp\}=\max\{\alpha x : x \in Q\}$. So, it follows that $(\alpha,\beta)\in \Pi_Q^+$. Conversely, take $(\alpha,\beta)\in \Pi_Q^+$. Then, as $\alpha x\geq0$ for $x\in\left\{e^1,\ldots,e^{n_1},r^1,\ldots,r^{h}\right\}$ and $\beta=\max\{\alpha x : x \in Q\}$, it follows that $\beta=\max\{\alpha x : x \in \qp\}$. This implies that $(\alpha,\beta)\in \Pi_{\qp}$. Therefore, as $\Pi_{\qp}=\Pi_Q^+$, we obtain $Q_{T,\Pi_Q^+}=\left\{x\in Q:\;\alpha x\leq \floor{\beta}_{T,\alpha}\;\forall (\alpha,\beta)\in \Pi_{\qp}\right\}=P\cap \qp_{T}$, as required.		
	
	By Theorems~\ref{thm:cov} and~\ref{thm:pack}, both $\qc_{T}$ and $\qp_{T}$ are rational polyhedra. In turn, both $Q_{T,\Pi_Q^+}$ and $Q_{T,\Pi_Q^-}$ are rational polyhedra, as required.
	\ifx\flagJournal\true \qed \fi	
}
\end{proof-lemma}

Next, as we promised, we show that it is sufficient to consider inequalities $\alpha x\leq \beta$ where $\alpha y\geq0$ for all $y\in\left\{e^1,\ldots,e^{n_1},r^1,\ldots,r^h\right\}$ or $\alpha y\leq0$ for all $y\in\left\{e^1,\ldots,e^{n_1},r^1,\ldots,r^h\right\}$. By Lemma~\ref{reduction1}, we may focus on $S\subseteq \mathbb{Z}^n$ that has the property that $\conv(S)\cap\mathbb{Z}^n=S$ and $\conv(S)$ is of the form~\eqref{eq:S_pointed} satisfying~\eqref{eq:S_pointed_assumption}. Now we take a relaxation $S_0$ of $S$; we choose $S_0$ to be the set of integer points in $\conv\{v^1,\ldots,v^{g}\}+\lin\{r^1,\ldots,r^{h}\}$, which contains $\conv(S)$. 
\begin{figure}[h!]
	\begin{center}
		\begin{tikzpicture}
		[main node/.style={circle,fill=black!30,draw,minimum size=0.1em, inner sep=1pt}]
		
		\fill[blue, nearly transparent] (8,3) -- (7,2) -- (5,1) -- (5,2) -- (6,3)  -- (8,3);
		\draw[blue,thick] (8,3) -- (7,2) -- (5,1) -- (5,2) -- (6,3);
		
		\fill[blue, nearly transparent] (15,3) -- (13,1) -- (11,1) -- (13,3)  -- (15,3);
		\draw[blue,thick] (15,3) -- (13,1);
		\draw[blue,thick] (13,3) -- (11,1);
		
		\node[main node] (4) at (5,1) {};
		\node[main node] (5) at (4,1) {};
		\node[main node] (6) at (6,1) {};
		\node[main node] (7) at (7,1) {};
		\node[main node] (8) at (8,1) {};
		
		\node[main node] (12) at (4,2) {};
		\node[main node] (13) at (5,2) {};
		\node[main node,label=above right:{$S$}] (14) at (6,2) {};
		\node[main node] (15) at (7,2) {};
		\node[main node] (16) at (8,2) {};
		
		\node[main node] (20) at (4,3) {};
		\node[main node] (21) at (5,3) {};
		\node[main node] (22) at (6,3) {};
		\node[main node] (23) at (7,3) {};
		\node[main node] (24) at (8,3) {};

		\node[main node] (4) at (12,1) {};
		\node[main node] (5) at (11,1) {};
		\node[main node] (6) at (13,1) {};
		\node[main node] (7) at (14,1) {};
		\node[main node] (8) at (15,1) {};
		
		\node[main node] (12) at (11,2) {};
		\node[main node] (13) at (12,2) {};
		\node[main node,label=above right:{$S_0$}] (14) at (13,2) {};
		\node[main node] (15) at (14,2) {};
		\node[main node] (16) at (15,2) {};
		
		\node[main node] (20) at (11,3) {};
		\node[main node] (21) at (12,3) {};
		\node[main node] (22) at (13,3) {};
		\node[main node] (23) at (14,3) {};
		\node[main node] (24) at (15,3) {};
		
		\end{tikzpicture}
		\caption{$S$ and $S_0$}\label{fig:pointed-to-cylinder}
	\end{center}
\end{figure}
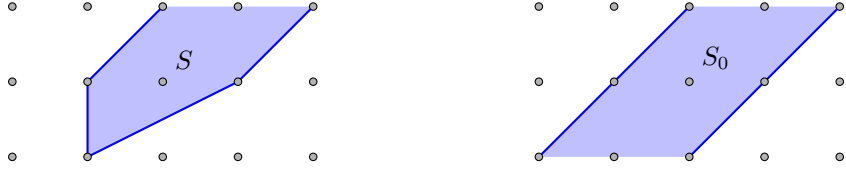
By definition, $\conv(S_0)\cap\mathbb{Z}^n=S_0$ and $S\subseteq S_0$. Moreover, since $\conv\{v^1,\ldots,v^{g}\}+\lin\{r^1,\ldots,r^{h}\}$ is an integral polyhedron and is also a rational cylinder,
\begin{equation}\label{eq:S_0}
\conv(S_0)=\conv\{v^1,\ldots,v^{g}\}+\lin\{r^1,\ldots,r^{h}\}
\end{equation}
and $\conv(S_0)$ is a rational cylinder (see Figure~\ref{fig:pointed-to-cylinder} for illustration). By Theorem~\ref{thm:cylinder}, we already know that $P_{S_0}$ is a rational polyhedron. Hence, we may focus on $S$-CG cuts that cut off some point in $P_{S_0}$. As a first step toward understanding such $S$-CG cuts, we observe the following lemma:

\begin{lemma}\label{LE:partition}
Let $S\subseteq \mathbb{Z}^n$ be such that $\conv(S)\cap\mathbb{Z}^n=S$ and $\conv(S)$ is of the form~\eqref{eq:S_pointed} satisfying~\eqref{eq:S_pointed_assumption}, and let $S_0\subseteq \mathbb{Z}^n$ be defined as above. Let $N_r=\{1,\ldots,h\}$. If $P\subseteq \conv(S)$ is a rational polyhedron, then $P_{S}=P_{S_0}\cap P_{S,\Pi^{+}}\cap P_{S,\Pi^{-}}$ where 
\begin{align}\label{eq:Pi_P+-}
\begin{aligned}
&\Pi^{+}:=\left\{(\alpha,\beta)\in\Pi_P:\;\alpha r^i\geq0~\text{ for }~i\in N_r\right\},\\
&\Pi^{-}:=\left\{(\alpha,\beta)\in\Pi_P:\;\alpha r^i\leq0~\text{ for }~i\in N_r\right\}.
\end{aligned}
\end{align}
\end{lemma}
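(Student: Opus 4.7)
The plan is to prove both inclusions. The containment $P_S \subseteq P_{S_0} \cap P_{S,\Pi^+} \cap P_{S,\Pi^-}$ is immediate from the monotonicity facts recorded in Section~\ref{sec:prelim}: $S \subseteq S_0$ gives $P_S \subseteq P_{S_0}$, and $\Pi^{\pm} \subseteq \Pi_P$ gives $P_S \subseteq P_{S,\Pi^{\pm}}$.

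For the reverse inclusion, I would take an arbitrary $x \in P_{S_0} \cap P_{S,\Pi^+} \cap P_{S,\Pi^-}$ and any $(\alpha,\beta) \in \Pi_P$, and verify $\alpha x \leq \floor{\beta}_{S,\alpha}$ by splitting on the signs of $\alpha r^1,\ldots,\alpha r^h$. If every $\alpha r^i \geq 0$, then $(\alpha,\beta) \in \Pi^+$ and $x \in P_{S,\Pi^+}$ supplies the bound; the all-nonpositive case is symmetric via $\Pi^-$. The only remaining case is the \emph{mixed-sign} case, where there exist $i^+,j^- \in N_r$ with $\alpha r^{i^+} > 0$ and $\alpha r^{j^-} < 0$; I claim this case reduces to showing the equality $\floor{\beta}_{S_0,\alpha} = \floor{\beta}_{S,\alpha}$, after which $x \in P_{S_0}$ supplies $\alpha x \leq \floor{\beta}_{S_0,\alpha} = \floor{\beta}_{S,\alpha}$.

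To establish this equality, the direction $\floor{\beta}_{S,\alpha} \leq \floor{\beta}_{S_0,\alpha}$ is free from $S \subseteq S_0$. For the opposite direction, the plan is to build a ``balancing'' vector and then shift. Since $J^+ := \{k : \alpha r^k > 0\}$ and $J^- := \{k : \alpha r^k < 0\}$ are both nonempty, one can choose strictly positive rationals $c_1,\ldots,c_h > 0$ with $\sum_k c_k \alpha r^k = 0$: set $c_k = 1$ off $J^-$ and distribute the positive quantity $\sum_{J^+} \alpha r^k$ among positive rational $c_k$'s on $J^-$ so that $\sum_{k \in J^-} c_k(-\alpha r^k) = \sum_{k \in J^+} \alpha r^k$. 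Scaling $c$ by a common denominator yields an integer vector $R := \sum_k c_k r^k \in \cone\{r^1,\ldots,r^h\} \cap \Z^n$ with $\alpha R = 0$ and \emph{every} $c_k$ strictly positive; the last property is essential below.

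To finish, given any $z_0 \in S_0$ attaining $\floor{\beta}_{S_0,\alpha}$, I use $\conv(S_0) = \conv\{v^1,\ldots,v^g\} + \lin\{r^1,\ldots,r^h\}$ from~\eqref{eq:S_0} to decompose $z_0 = v + \ell$ with $v \in \conv\{v^1,\ldots,v^g\}$ and $\ell = \sum_k d_k r^k$ for some reals $d_k$ of arbitrary sign. Choosing any positive integer $N$ larger than $\max_k(-d_k/c_k)$ makes every coefficient $d_k + Nc_k$ strictly positive, so $\ell + NR \in \cone\{r^1,\ldots,r^h\}$ and therefore $z_0 + NR = v + (\ell + NR) \in \conv\{v^j\} + \cone\{r^j\} = \conv(S)$. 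Integrality of $z_0$ and $R$ combined with the assumption $\conv(S) \cap \Z^n = S$ places $z_0 + NR \in S$, while $\alpha R = 0$ gives $\alpha(z_0 + NR) = \alpha z_0 = \floor{\beta}_{S_0,\alpha}$, so $\floor{\beta}_{S,\alpha} \geq \floor{\beta}_{S_0,\alpha}$, completing the argument. The main subtlety I expect is securing the strict positivity of $c_k$ on \emph{every} generator $r^k$ (including those in $J^0 := \{k : \alpha r^k = 0\}$), since this is what allows a single $N$ to dominate all the $d_k$'s simultaneously; the remaining verification is direct.
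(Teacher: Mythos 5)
Your proposal is correct and follows the same overall strategy as the paper's proof: show that the mixed-sign case forces $\floor{\beta}_{S,\alpha}=\floor{\beta}_{S_0,\alpha}$ by exhibiting a vector $R$ in the recession cone of $\conv(S)$ with $\alpha R=0$, and then shifting an $S_0$-optimizer $z_0$ by $NR$ to land back inside $S$ at the same $\alpha$-value. The one place you diverge is precisely the place where your version is more careful. The paper builds
\begin{equation*}
r := \Bigl(\sum_{i\in J^+}\alpha r^i\Bigr)\sum_{j\in J^-}r^j + \Bigl(-\sum_{j\in J^-}\alpha r^j\Bigr)\sum_{i\in J^+}r^i,
\end{equation*}
which is a conic combination of only the generators indexed by $J^+\cup J^-$, and then asserts that $\bar z + Mr\in S$ for large $M$. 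If $J^0:=N_r\setminus(J^+\cup J^-)$ is nonempty, that $r$ need not lie in the relative interior of $\cone\{r^1,\ldots,r^h\}$, and the lineality component $\ell$ of $\bar z$ may have a direction in $\lin\{r^j: j\in J^0\}$ that $r$ cannot absorb, so $\bar z + Mr\in\conv(S)$ can fail for every $M$. (Concretely: $r^1=e^1,r^2=e^2,r^3=e^3$, $\alpha=(1,-1,0)$, $\bar z$ with negative third coordinate.) Your requirement that $c_k>0$ for \emph{all} $k$, including $k\in J^0$, is exactly what makes the shift succeed: writing $\ell=\sum_k d_k r^k$, choosing $N>\max_k(-d_k/c_k)$ makes every coefficient of $\ell+NR$ in $\{r^k\}$ positive, so $z_0+NR\in\conv(S)$, and integrality of $z_0$ and $R$ together with $\conv(S)\cap\Z^n=S$ puts it in $S$. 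You correctly identified this as the main subtlety; the paper's stated construction glosses over it, though it is easily repaired (for instance by adding $\sum_{j\in J^0}r^j$ to $r$). Everything else in your argument --- the easy direction via monotonicity, the case split on the sign pattern of $\alpha r^1,\ldots,\alpha r^h$, the rationality/integrality bookkeeping for $R$ --- is correct.
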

\begin{proof}
As $S\subseteq S_0$, it follows that $P_{S}\subseteq P_{S_0}$. To prove the claim in the lemma, we will argue that if the \scg derived from $(\alpha,\beta)\in \Pi_P$ is violated by a point in $P_{S_0}$, then $(\alpha,\beta)\in \Pi^+\cup \Pi^-$. To this end, consider an arbitrary $(\alpha,\beta)\in \Pi_P$ such that $\alpha x \leq \floor{\beta}_{S,\alpha}$ is violated by a point in $P_{S_0}$. If $\floor{\beta}_{S,\alpha} =  \floor{\beta}_{S_0,\alpha}$, then the associated \scg is the same as the associated $S_0$-CG cut. Therefore $\floor{\beta}_{S,\alpha} < \floor{\beta}_{S_0,\alpha}$. This means that while $S_0$ contains a point $\bar z$ such that $\alpha \bar z=\floor{\beta}_{S_0,\alpha}$, there is no such point in $S$.
	
We will argue that either $\alpha r^i \geq 0$ for all $i\in N_r=\{1,\ldots, h\}$ or $\alpha r^i\leq0$ for all $i\in N_r$ must hold. Suppose for a contradiction that there are distinct $i,j\in   N_r$ such that $\alpha r^i>0$ and $\alpha r^j<0$. Let $J^+=\{i\in  N_r:\alpha r^i>0\}$ and $J^-:=\{j\in  N_r:\alpha r^j<0\}$. We construct a vector $r\in \mathbb{Z}^n$ where
	\[
	r:=\bigg(\sum_{i\in J^+}\alpha r^i\bigg)\sum_{j\in J^-}r^j+\bigg(-\sum_{j\in J^-}\alpha r^j\bigg)\sum_{i\in J^+}r^i.
	\]
	Since both $\sum_{i\in J^+}\alpha r^i$ and $-\sum_{j\in J^-}\alpha r^j$ are strictly positive, there exists an integer $M$ such that $\bar z+Mr\in S$. Moreover, note that $\alpha r=0$, and therefore, $\alpha(\bar z+Mr)=\alpha \bar z$. However, this implies that $\floor{\beta}_{S,\alpha}=\floor{\beta}_{S_0,\alpha}$, a contradiction. Therefore, $\alpha r^i\geq0$ for all $i\in N_r$ or $\alpha r^i\leq0$ for all $i\in N_r$ must hold.
	\ifx\flagJournal\true \qed \fi
\end{proof}

What we observed while proving Lemma~\ref{LE:partition} is that if $\alpha r^i>0>\alpha r^j$ for some $i,j\in N_r$, then the \scg from $\alpha x\leq \beta$ is not strictly stronger than the $S_0$-CG cut from $\alpha x\leq \beta$. Figure~\ref{fig:uniform-sign} provides a geometric intuition behind it. If $\alpha r^i>0>\alpha r^j$, then the hyperplane $\{x\in\R^n:\alpha x=\beta\}$ must contain a ray in the recession cone of $\conv(S)$, so the intersection of the hyperplane and $\conv(S)$ stretches toward the infinite direction. As a consequence, the hyperplane is surrounded by infinitely many integer points in $S$ that are potentially blocking the hyperplane from being moved by much without touching them.
\begin{figure}[h!]
	\begin{center}
		\begin{tikzpicture}
		[main node/.style={circle,fill=black!30,draw,minimum size=0.1em, inner sep=1pt}]
		
		\fill[blue, nearly transparent] (4.7,3) -- (4,1) -- (9,2.25) -- (9,3) -- (4.7,3);
		\draw[blue,thick] (4.7,3) -- (4,1) -- (9,2.25);
		\draw[red,thick] (4.2,3) -- (6.5,1);
		\draw[red,thick] (3.5,1.1) -- (7.5,3);

		\node[main node] (4) at (5,1) {};
		\node[main node] (5) at (4,1) {};
		\node[main node] (6) at (6,1) {};
		\node[main node] (7) at (7,1) {};
		\node[main node] (8) at (8,1) {};
		
		\node[main node] (12) at (4,2) {};
		\node[main node] (13) at (5,2) {};
		\node[main node] (14) at (6,2) {};
		\node[main node] (15) at (7,2) {};
		\node[main node] (16) at (8,2) {};
		
		\node[main node] (20) at (4,3) {};
		\node[main node] (21) at (5,3) {};
		\node[main node] (22) at (6,3) {};
		\node[main node] (23) at (7,3) {};
		\node[main node] (24) at (8,3) {};

		\node[main node] (5) at (9,3) {};
		\node[main node] (7) at (9,2) {};
		\node[main node] (8) at (9,1) {};

		\end{tikzpicture}
		\caption{Uniform signs versus nonuniform signs}\label{fig:uniform-sign}
	\end{center}
\end{figure}
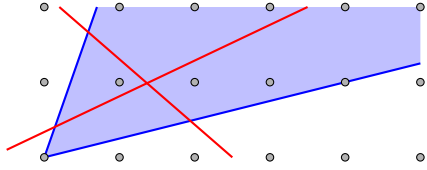
As a result, we may focus on inequalities $\alpha x\leq \beta$ where $\alpha y\geq0$ for all $y\in\left\{r^1,\ldots,r^h\right\}$ or $\alpha y\leq0$ for all $y\in\left\{r^1,\ldots,r^h\right\}$. 

However, in $\Pi_Q^{+}\cup\Pi_Q^{-}$, not only the signs of $\alpha r^1,\ldots, \alpha r^h$ are the same but also the signs of $\alpha e^1,\ldots, \alpha e^{n_1}$. As $\rec(\conv(S))\subseteq \{\mathbf{0}\}\times\mathbb{R}^{n_2}$ by~\eqref{eq:S_pointed_assumption}, $e^1,\ldots, e^{n_1}$ correspond to the coordinates in which the points of $S$ have bounded values. Hence, the intuition about having an infinite intersection does not hold here, so we need a separate technique. That is, we partition $\Pi^+$ and $\Pi^-$ in~\eqref{eq:Pi_P+-} based on the sign pattern of $\alpha e^1,\ldots, \alpha e^{n_1}$ and for each part of the partition, apply a unimodular transformation to make $\alpha e^1,\ldots,\alpha e^n$ have the same sign. 

\begin{lemma}\label{reduction}
Let $S\subseteq \mathbb{Z}^n$ be such that $\conv(S)\cap\mathbb{Z}^n=S$ and $\conv(S)$ is of the form~\eqref{eq:S_pointed} satisfying~\eqref{eq:S_pointed_assumption}, and let $P\subseteq \conv(S)$ be a rational polyhedron. Then $P_{S}$ is a rational polyhedron.	
\end{lemma}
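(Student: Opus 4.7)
The plan is to combine Lemma~\ref{LE:partition} (which decomposes $P_S$) with Lemma~\ref{lemma:double-star} (which handles inequalities with uniform sign on both $\{e^1,\ldots,e^{n_1}\}$ and $\{r^1,\ldots,r^h\}$), using unimodular transformations to bridge the two. By Lemma~\ref{LE:partition}, $P_S = P_{S_0} \cap P_{S,\Pi^{+}} \cap P_{S,\Pi^{-}}$; since $\conv(S_0)$ is a rational cylinder, Theorem~\ref{thm:cylinder} gives that $P_{S_0}$ is a rational polyhedron. It thus suffices to show that $P_{S,\Pi^{+}}$ and $P_{S,\Pi^{-}}$ are rational polyhedra, as a finite intersection of such polyhedra is polyhedral.

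The key observation is that $\Pi^{+}$ only constrains the signs of the values $\alpha r^j$, not the signs of $\alpha e^i$ for $i \in \{1,\ldots,n_1\}$, whereas Lemma~\ref{lemma:double-star} demands uniform signs on all of $\{e^1,\ldots,e^{n_1},r^1,\ldots,r^h\}$. To close this gap, I would further partition $\Pi^{+}$ by sign pattern on $\{e^1,\ldots,e^{n_1}\}$: for each $I\subseteq\{1,\ldots,n_1\}$, define $\Pi^{+}(I) := \{(\alpha,\beta)\in\Pi^{+}:\ \alpha e^i\ge 0\text{ for }i\in I,\ \alpha e^i\le 0\text{ for }i\notin I\}$, so that $\Pi^{+} = \bigcup_I \Pi^{+}(I)$ and $P_{S,\Pi^{+}} = \bigcap_I P_{S,\Pi^{+}(I)}$, a finite intersection. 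I would handle $\Pi^{-}$ symmetrically, partitioning by the opposite sign pattern so that the transformed coefficients become entrywise nonpositive on $\{e^1,\ldots,e^{n_1},r^1,\ldots,r^h\}$.

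For each $I$, I would construct a unimodular transformation $\tau_I:x\mapsto U_Ix+v_I$, where $U_I$ flips the coordinates in $\{1,\ldots,n_1\}\setminus I$ and $v_I = M\sum_{i\notin I}e^i$ with $M\in\mathbb{Z}$ chosen large enough (as in the construction inside Lemma~\ref{reduction1}) to guarantee $\tau_I(\conv(S))\subseteq\mathcal{C}$. Because each $r^j\in\{\mathbf{0}\}\times\mathbb{R}^{n_2}$ is fixed by $U_I$ and by translation, the set $T:=\tau_I(S)$ still has $\conv(T)\cap\mathbb{Z}^n=T$ and $\conv(T)$ of the form~\eqref{eq:S_pointed} satisfying~\eqref{eq:S_pointed_assumption}. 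Under $\tau_I$ the coefficient vector $\alpha$ transforms to $\tilde\alpha$ with $\tilde\alpha e^i=\alpha e^i\ge0$ for $i\in I$, $\tilde\alpha e^i=-\alpha e^i\ge 0$ for $i\notin I$, and $\tilde\alpha r^j=\alpha r^j\ge 0$. A direct check using that $\tau_I^{-1}$ is a transformation of the same type shows that $\tau_I$ restricts to a bijection $\Pi^{+}(I)\to \Pi^{+}_{\tau_I(P)}$ (with $\Pi^+_{\tau_I(P)}$ being the analogue of $\Pi_Q^+$ in Lemma~\ref{lemma:double-star} for the transformed polyhedron). Applying Lemma~\ref{lemma:double-star} to $\tau_I(P)$ and $T$ and invoking the unimodular mapping lemma (Lemma~\ref{LE:unimodular-closure}) yields that $P_{S,\Pi^{+}(I)}$ is a rational polyhedron.

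Assembling these pieces, $P_{S,\Pi^{+}}$ and, by the symmetric argument, $P_{S,\Pi^{-}}$ are rational polyhedra, and hence so is $P_S$. The main technical step — and the only place where one must be careful — is verifying the bijection $\tau_I(\Pi^{+}(I))=\Pi^{+}_{\tau_I(P)}$, since this is the hypothesis that allows Lemma~\ref{lemma:double-star} to be invoked with $Q=\tau_I(P)$, $T=\tau_I(S)$; the remainder is a routine aggregation of finitely many polyhedral pieces. I do not expect any genuine obstacle beyond the sign-tracking bookkeeping in this bijection and the choice of $M$ large enough to guarantee $\tau_I(\conv(S))\subseteq\mathcal{C}$.
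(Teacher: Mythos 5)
Your proof follows essentially the same route as the paper's: decompose via Lemma~\ref{LE:partition}, further partition $\Pi^{+}$ and $\Pi^{-}$ according to the sign pattern on $\{e^1,\ldots,e^{n_1}\}$, apply a sign-flipping unimodular transformation for each pattern so that Lemma~\ref{lemma:double-star} becomes applicable, and pull the conclusion back via Lemma~\ref{LE:unimodular-closure}. The one place you diverge is actually to your credit: the paper's $\tau$ is a pure sign flip with no translation and simply asserts that $\conv(\tau(S))$ still satisfies~\eqref{eq:S_pointed_assumption}, but after negating coordinates in $N_1\setminus J$ the transformed vertices $\tau(v^i)$ can have negative entries, so a translation (as you supply, and as Lemma~\ref{reduction1} shows is always available for sets with bounded first $n_1$ coordinates) is indeed needed to restore $\tau(\conv(S))\subseteq\mathcal{C}$.
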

\begin{proof}
By Lemma~\ref{LE:partition}, to show that $P_{S}$ is a rational polyhedron, it is sufficient to show that both $P_{S,\Pi^{+}}$ and $P_{S,\Pi^{-}}$ are rational polyhedra, where $\Pi^{+}$ and $\Pi^{-}$ are defined as in~\eqref{eq:Pi_P+-}. By~\eqref{eq:S_pointed_assumption}, we have $\lin\left(\conv(S)\right)=\{\mathbf{0}\}\times \mathbb{R}^{n_2}$ and thus $\left(\lin\left(\conv(S)\right)\right)^\perp=\mathbb{R}^{n_1}\times\{\mathbf{0}\}$. So, $\left\{e^1,\ldots,e^{n_1}\right\}$ is a basis of $\left(\lin\left(\conv(S)\right)\right)^\perp$. Next we partition $\Pi^{+}$ and $\Pi^{-}$, according to the sign pattern of $\pi e^1,\ldots,\pi e^{n_1}$. Let $N_1=\{1,\ldots,n_1\}$, and for $J\subseteq N_1$, we let
\begin{align*}
	&\Pi^{+}(J)=\left\{(\pi,\beta)\in\Pi^{+}:\;
	\pi e^j\geq0~\text{ for }~j\in J,\;\pi e^j\leq0~\text{ for }~j\in  N_1\setminus J
	\right\},\\
	&\Pi^{-}(J) =\left\{(\pi,\beta)\in\Pi^{-}:\;
	\pi e^j\leq0~\text{ for }~j\in J,\;\pi e^j\geq0~\text{ for }~j\in  N_1\setminus J
	\right\}.
	\end{align*}
	Then it follows from Lemma~\ref{LE:partition} that 
	\begin{equation}	
	P_{S}=P_{S_0}\cap \big(\cap_{J\subseteq   N_1}P_{S,\Pi^{+}(J)} \big)\cap \big(\cap_{J\subseteq   N_1}P_{S,\Pi^{-}(J)}\big).\label{eq:decompose}
	\end{equation}
In fact, based on Lemmas~\ref{lemma:double-star} and~\ref{LE:unimodular-closure}, we will argue that $P_{S,\Pi^{+}(J)}$  and  $P_{S,\Pi^{+\-}(J)}$ are rational polyhedra for all $J\subseteq  N_1$. To this end, take a $J\subseteq   N_1$, and let $\tau$ be the unimodular transformation mapping $x\in\mathbb{R}^n$ to $y=\tau(x)\in\mathbb{R}^n$ where
	\[
	y_i:=\begin{cases}
	\begin{array}{ll}
	-x_i,&\text{if }i\in   N_1\setminus J\\
	x_i,&\text{otherwise.}
	\end{array}
	\end{cases}
	\]
	Let $Q:=\tau(P)$ and $T:=\tau(S)$. Clearly, $T\subseteq \mathbb{Z}^n$ satisfies that $\conv(T)\cap\mathbb{Z}^n=T$ and $\conv(T)$ is of the form~\eqref{eq:S_pointed} satisfying~\eqref{eq:S_pointed_assumption}. Moreover, it follows from Lemma~\ref{LE:unimodular-closure} that $Q\subseteq \conv(T)$, $\tau(P_{S,\Pi^{+}(J)})=Q_{T,\Pi_Q^+}$, and $\tau(P_{S,\Pi^{-}(J)})=Q_{T,\Pi_Q^{-}}$ where $\Pi_Q^+$ and $\Pi_Q^-$ are defined as in~\eqref{eq:foo}, respectively. Then, by Lemma~\ref{lemma:double-star}, $Q_{T,\Pi_Q^{+}}$ and $Q_{T,\Pi_Q^{-}}$ are rational polyhedra, implying in turn that $P_{S,\Pi^{+}(J)}$ and $P_{S,\Pi^{-}(J)}$ are rational polyhedra. So, by~\eqref{eq:decompose}, $P_{S}$ is a rational polyhedron.
	\ifx\flagJournal\true \qed \fi
\end{proof}

Now we are finally ready to prove the main result of this section.

\begin{theorem}\label{thm:pointed}
Let $S=R\cap\mathbb{Z}^n$ for some rational pointed polyhedron $R$, and let $P\subseteq \conv(S)$ be a rational polyhedron. Then $P_S$ is a rational polyhedron.
\end{theorem}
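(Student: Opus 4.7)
The plan is to reduce the general setting, where $S = R \cap \mathbb{Z}^n$ for an arbitrary rational pointed polyhedron $R$, to the more restrictive setting of Lemma~\ref{reduction}, which assumes that $\conv(S)$ is of the form~\eqref{eq:S_pointed} and satisfies~\eqref{eq:S_pointed_assumption}. This reduction is exactly what Lemma~\ref{reduction1} provides, and the remaining task is to transport the polyhedrality conclusion back through the unimodular map via Lemma~\ref{LE:unimodular-closure}.

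Concretely, first I would invoke Lemma~\ref{reduction1} to produce a unimodular transformation $\tau$ such that $T := \tau(S)$ satisfies $\conv(T) \cap \mathbb{Z}^n = T$ and $\conv(T)$ is of the form~\eqref{eq:S_pointed} satisfying~\eqref{eq:S_pointed_assumption}. Setting $Q := \tau(P)$, Lemma~\ref{LE:unimodular-closure} yields $Q \subseteq \conv(T)$ together with the identity $\tau(P_S) = Q_T$.

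Next, I would apply Lemma~\ref{reduction} to $T$ and $Q$ to deduce that $Q_T$ is a rational polyhedron. Consequently $\tau(P_S) = Q_T$ is a rational polyhedron, and since $\tau^{-1}$ is again a unimodular transformation (in particular, an affine map with rational coefficients), $P_S = \tau^{-1}(Q_T)$ is a rational polyhedron as well.

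There is no genuine obstacle remaining at this point: all the serious content, namely the covering and packing cases of Theorems~\ref{thm:cov} and~\ref{thm:pack}, the sign-pattern decomposition of Lemma~\ref{LE:partition}, the relaxation to the cylinder $S_0$ together with Theorem~\ref{thm:cylinder}, and the unimodular normalization of Lemma~\ref{reduction1}, has been developed earlier in the section. The proof of Theorem~\ref{thm:pointed} is simply the bookkeeping step of composing these previously established results through the unimodular mapping lemma.
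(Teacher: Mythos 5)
Your proof is correct and follows essentially the same route as the paper: invoke Lemma~\ref{reduction1} together with the unimodular mapping lemma (Lemma~\ref{LE:unimodular-closure}) to reduce to the normalized setting, then apply Lemma~\ref{reduction}. The only difference is that you spell out the ``we may assume'' step explicitly, whereas the paper compresses it.
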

\begin{proof}
By the unimodular mapping lemma (Lemma~\ref{LE:unimodular-closure}) and Lemma~\ref{reduction1}, we may assume that $\conv(S)\cap\mathbb{Z}^n=S$ and $\conv(S)$ is of the form~\eqref{eq:S_pointed} satisfying~\eqref{eq:S_pointed_assumption}. Then, by Lemma~\ref{reduction}, $P_S$ is a rational polyhedron, as required.
	\ifx\flagJournal\true \qed \fi
\end{proof}

\section{Polyhedra with nontrivial lineality space}\label{sec:main}

In this section, we get back to the most general case and prove Theorem~\ref{main result}:
\[
S=R\cap\mathbb{Z}^n\quad \text{where }R\text{ is a rational polyhedron}
\]
and $R$ is not necessarily pointed. Then $\conv(S)\cap\mathbb{Z}^n=S$ and $\conv(S)$ can be written as
\begin{equation}\label{S:general}
\conv(S)=\mathcal{P}+\mathcal{R}+\mathcal{L}\nonumber
\end{equation}
where $\mathcal{L}$ is the lineality space of $\conv(S)$, $\mathcal{P}+\mathcal{R}$ is the pointed polyhedron $\conv(S)\cap\mathcal{L}^\perp$ whose recession cone is $\mathcal{R}$, and $\mathcal{P}$ is a polytope. As in the previous section, we take a relaxation $S_0\subseteq \mathbb{Z}^n$ such that $\conv(S_0)\cap\mathbb{Z}^n=S_0$ and $$\conv(S_0)=\mathcal{P}+\lin(\mathcal{R})+\mathcal{L}$$
where $\lin(\mathcal{R})$ is the linear hull of $\mathcal{R}$ or $\mathcal{R}+(-\mathcal{R})$. By definition, $S\subseteq S_0$ and $\conv(S_0)$ is a relaxation of $\conv(S)$. Moreover, $\conv(S_0)$ is a rational cylinder, and by Theorem~\ref{thm:cylinder}, we know that the $S_0$-CG closure of a rational polyhedron is a rational polyhedron.

\begin{lemma}\label{LE:general-2}
	If $P\subseteq \conv(S)$ is a rational polyhedron, then  
	\begin{equation}\label{eq:Pi}
	P_{S}=P_{S_0}\cap P_{S,\Pi}\quad\text{where}~~\Pi:=\left\{(\alpha,\beta)\in\Pi_P:\; \alpha \ell=0~\text{ for }~\ell\in \mathcal{L}\right\}.
	\end{equation}
\end{lemma}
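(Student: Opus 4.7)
The inclusion $P_S\subseteq P_{S_0}\cap P_{S,\Pi}$ is immediate from the monotonicity observations in Section~\ref{sec:prelim}: $S\subseteq S_0$ gives $P_S\subseteq P_{S_0}$, and $\Pi\subseteq\Pi_P$ gives $P_S\subseteq P_{S,\Pi}$. The substance of the proof is the reverse inclusion, and my plan is to establish it by showing that for every $(\alpha,\beta)\in\Pi_P\setminus\Pi$ the $S$-CG cut and the $S_0$-CG cut coincide, i.e., $\floor{\beta}_{S,\alpha}=\floor{\beta}_{S_0,\alpha}$. Once this is known, any $x^*\in P_{S_0}\cap P_{S,\Pi}$ automatically satisfies $\alpha x^*\leq\floor{\beta}_{S,\alpha}$ for every $(\alpha,\beta)\in\Pi_P$: cuts with coefficients in $\Pi$ are handled by membership in $P_{S,\Pi}$, while the remaining cuts are implied by membership in $P_{S_0}$ via the equality above. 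This is the lineality analog of the argument in Lemma~\ref{LE:partition}, where two oppositely-signed recession directions were used; here the two-sided lineality vector $\pm\ell$ plays the same role.

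To prove $\floor{\beta}_{S,\alpha}=\floor{\beta}_{S_0,\alpha}$ when some $\ell\in\mathcal{L}$ satisfies $\alpha\ell\neq 0$, only the direction $\geq$ needs work, since $S\subseteq S_0$ gives the reverse. I will start from a maximizer $z_0\in S_0$ with $\alpha z_0=\floor{\beta}_{S_0,\alpha}$, decompose $z_0=p+\sum_{i}\mu_i r^i+\ell_0$ according to the splitting $\conv(S_0)=\mathcal{P}+\lin(\mathcal{R})+\mathcal{L}$ (the coefficients $\mu_i$ are real and possibly negative), and then modify $z_0$ by two compensating integer shifts that keep $\alpha z$ unchanged while moving the point into $\conv(S)=\mathcal{P}+\mathcal{R}+\mathcal{L}$. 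The first shift adds $M\sum_i r^i$ for a positive integer $M>\max_i(-\mu_i)$, which converts the $\lin(\mathcal{R})$-component into a nonnegative combination of the $r^i$. The second shift subtracts an integer lineality vector $\ell'\in\mathcal{L}\cap\mathbb{Z}^n$ satisfying $\alpha\ell'=M\,\alpha(\sum_i r^i)$, which offsets the change in $\alpha$-value. The resulting $z:=z_0+M\sum_i r^i-\ell'$ lies in $(\mathcal{P}+\mathcal{R}+\mathcal{L})\cap\mathbb{Z}^n=S$ and has $\alpha z=\alpha z_0\leq\beta$, yielding $\floor{\beta}_{S,\alpha}\geq\alpha z_0=\floor{\beta}_{S_0,\alpha}$.

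The main technical point is the existence of the required $\ell'$. Let $G:=\{\alpha\ell:\ell\in\mathcal{L}\cap\mathbb{Z}^n\}$, a subgroup of $\mathbb{Z}$. Because $\mathcal{L}$ is a rational subspace, $\mathcal{L}\cap\mathbb{Z}^n$ spans $\mathcal{L}$ as a $\mathbb{Q}$-vector space, so the assumption that $\alpha\ell\neq 0$ for some $\ell\in\mathcal{L}$ forces $G$ to be nontrivial, say $G=g\mathbb{Z}$ with $g\geq 1$. To guarantee $M\,\alpha(\sum_i r^i)\in G$ it then suffices to choose $M$ as a positive multiple of $g$, which can simultaneously be taken large enough that every $\mu_i+M\geq 0$. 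This supplies the desired $\ell'$ and closes the argument.
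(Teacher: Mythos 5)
Your proof is correct and follows the same strategy as the paper's: establish $\floor{\beta}_{S,\alpha}=\floor{\beta}_{S_0,\alpha}$ whenever $\alpha$ does not vanish on $\mathcal{L}$, by translating an $S_0$-maximizer $z_0$ into $S$ along an integral, $\alpha$-null direction that adds a large multiple of $\sum_i r^i$ and cancels the resulting change with a compensating vector from $\mathcal{L}\cap\mathbb{Z}^n$. The only difference is one of packaging: the paper constructs the compensating vector explicitly by choosing an integral basis vector $\ell^i\in\mathcal{L}$ with $\alpha\ell^i\neq 0$ and forming the single direction $r=|\alpha\ell^i|\sum_j r^j-\mathrm{sign}(\alpha\ell^i)\bigl(\sum_j\alpha r^j\bigr)\ell^i$ before scaling by one large integer $M$, whereas you obtain the compensating $\ell'$ via the subgroup $\{\alpha\ell:\ell\in\mathcal{L}\cap\mathbb{Z}^n\}$ of $\mathbb{Z}$ and pick $M$ divisible by its generator---these are the same idea in different dress.
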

\begin{proof}
As $S\subseteq S_0$, we know that $P_{S}\subseteq P_{S_0}$. We will argue that if the \scg derived from $(\alpha,\beta)\in \Pi_P$ cuts off a point in $P_{S_0}$, then $(\alpha,\beta)\in \Pi$, thereby proving that $P_{S}=P_{S_0}\cap P_{S,\Pi}$. To this end, take a pair $(\alpha,\beta)\in\Pi_P$. We may assume that $\floor{\beta}_{S,\alpha}<\floor{\beta}_{S_0,\alpha}$. Otherwise, $\floor{\beta}_{S,\alpha}=\floor{\beta}_{S_0,\alpha}$ and the \scg derived from $(\alpha,\beta)$ is the same as the corresponding $S_0$-CG cut, which means that the \scg does not cut off any point in $P_{S_0}$. Let $z\in S_0$ be such that $\alpha z = \floor{\beta}_{S_0,\alpha}$. Then the assumption $\floor{\beta}_{S,\alpha}<\floor{\beta}_{S_0,\alpha}$ implies that $\floor{\beta}_{S,\alpha}<\alpha z$.
	
Let $\left\{\ell^1,\ldots,\ell^{g}\right\}$ be a basis of $\mathcal{L}$, and let $\left\{r^1,\ldots,r^{h}\right\}$ be a basis of $\mathcal{R}$. It suffices to show that $\alpha\ell^i=0$ for all $i=1,\ldots,g$. Suppose for a contradiction that $\alpha \ell^{i}\neq 0$ for some $i$. Now we construct a vector $r$ as follows:
\[
r:=\left|\alpha \ell^{i}\right|\sum_{j=1}^hr^j - \frac{\left|\alpha e^{i}\right|}{\alpha \ell^{i}}\bigg(\sum_{j=1}^h\alpha r^j\bigg)\ell^{i}.
\]
Since $\left|\alpha \ell^{i}\right|$ is strictly positive, there exists a sufficiently large integer $M$ such that $z+Mr\in S$. Moreover, notice that $\alpha r=0$, so it follows that $\alpha(z+Nr)=\alpha z$. This in turn implies $\alpha z\leq \floor{\beta}_{S,\alpha}<\floor{\beta}_{S_0,\alpha}=\alpha z$, a contradiction. Therefore, $\alpha \ell^i=0$ for all $i$, as required. 
\ifx\flagJournal\true \qed \fi
\end{proof}
By this lemma, it is sufficient to show that $P_{S,\Pi}$ is a rational polyhedron, for which the following lemma will be useful:
\begin{lemma}[\bf Projection lemma~\cite{Dash19}]\label{LE:projection}
Let $F, S$ and $P$ be defined as 
\begin{equation}
S=F\times \mathbb{Z}^{n_2} \mbox{ for some } F \subseteq \mathbb{Z}^{n_1},\quad P = \{(x,y) \in \R^{n_1}\times\R^{n_2} : Ax + Cy \leq b\}\notag 
\end{equation}
where the matrices $A,C,b$ have integral components and $n_1,n_2,1$ columns, respectively. Let $\Omega \subseteq \{(\alpha, \beta) \in \Pi_P: \alpha=(\phi,\mathbf{0})\in \R^{n_1}\times\R^{n_2}\}$, and let $\Phi = \left\{(\phi,\beta)\in \R^{n_1}\times\R:(\phi,\mathbf{0})=\alpha,\;(\alpha,\beta)\in \Omega\right\}$. If $Q = \proj_x(P)$, then, $P_{S,\Omega}=P\cap\left(Q_{F, \Phi}\times\R^{n_2}\right)$.
\end{lemma}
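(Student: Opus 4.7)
The plan is to unfold the definitions and exploit the block structure: every $\alpha$ indexing a cut in $\Omega$ has the form $\alpha=(\phi,\mathbf{0})$, so each $S$-CG cut, after being read in the $(x,y)$-space, becomes a constraint on $x$ alone; moreover since $S=F\times\Z^{n_2}$, the strengthened right-hand side $\floor{\beta}_{S,\alpha}$ reduces to the $F$-CG right-hand side $\floor{\beta}_{F,\phi}$ computed over the projection $Q$. Once these two reductions are in hand, the lemma follows by stripping the $y$-block on both sides.

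The first step is to prove the identity $\floor{\beta}_{S,\alpha}=\floor{\beta}_{F,\phi}$ for each $(\alpha,\beta)\in\Omega$. Since $\alpha z=\phi x$ for every $z=(x,y)\in F\times\Z^{n_2}$, the optimization problems
\[
\max\{\alpha z:\ z\in S,\ \alpha z\leq\beta\}\quad\text{and}\quad\max\{\phi x:\ x\in F,\ \phi x\leq\beta\}
\]
take the same values on their feasible sets, and so share the same optimum. Feasibility of the second problem is inherited from $(\alpha,\beta)\in\Pi_P$: any point of $P$ attaining $\beta$ projects to a point of $Q$ attaining $\beta$, and any integer $x\in F$ with $\phi x\leq\beta$ lifts to a point of $S$ satisfying $\alpha z\leq\beta$ by choosing any $y\in\Z^{n_2}$. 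In particular the map $(\alpha,\beta)\mapsto(\phi,\beta)$ is a bijection from $\Omega$ onto $\Phi$ that sends each $S$-CG cut for $P$ in $\Omega$ to the lift of the corresponding $F$-CG cut for $Q$ in $\Phi$.

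Combining this with the observation $\alpha(x,y)=\phi x$, the $S$-CG cut $\alpha(x,y)\leq\floor{\beta}_{S,\alpha}$ is literally the same constraint as $\phi x\leq\floor{\beta}_{F,\phi}$ viewed on $\R^{n_1}\times\R^{n_2}$ with $y$ unconstrained. Intersecting over all pairs then yields
\[
\bigcap_{(\alpha,\beta)\in\Omega}\bigl\{(x,y):\ \alpha(x,y)\leq\floor{\beta}_{S,\alpha}\bigr\}\;=\;Q_{F,\Phi}\times\R^{n_2}.
\]
Intersecting this identity with $P$ (which is the convention under which $P_{S,\Omega}$ appears in the lemma) produces $P_{S,\Omega}=P\cap(Q_{F,\Phi}\times\R^{n_2})$, as claimed. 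There is no real obstacle in this argument; the lemma is essentially a change-of-variables identity whose only points of care are the bijection $\Omega\leftrightarrow\Phi$ and the well-definedness of $\floor{\beta}_{F,\phi}$, both of which are immediate from the hypotheses.
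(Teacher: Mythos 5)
The paper does not prove this lemma; it is cited verbatim from \cite{Dash19}, so there is no in-paper argument to compare against. Judged on its own, your proof is correct and is the natural definitional-unfolding argument: the identity $\floor{\beta}_{S,\alpha}=\floor{\beta}_{F,\phi}$ follows from $\alpha(x,y)=\phi x$ on $S=F\times\Z^{n_2}$, the map $(\alpha,\beta)\mapsto(\phi,\beta)$ identifies $\Omega$ with $\Phi$ and lands in $\Pi_Q$ because $\max\{\phi x:x\in Q\}=\max\{\alpha z:z\in P\}=\beta$, and then each $S$-CG cut in $\Omega$ is literally the cylinder over the corresponding $F$-CG cut for $Q$.

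Two small points worth tightening. First, the sentence beginning ``Feasibility of the second problem is inherited from $(\alpha,\beta)\in\Pi_P$'' conflates two separate facts: membership $(\phi,\beta)\in\Pi_Q$ (which your projection observation gives) and the equivalence of feasibility of the two truncated maximization problems (which your lift/project pair gives). Both are needed, and you do supply both, but the lead-in mislabels what is being established — $\Pi_P$-membership by itself does not guarantee that $S$ (or $F$) contains a point below the hyperplane. Second, you flag at the end that the identity requires reading $P_{S,\Omega}$ and $Q_{F,\Phi}$ with the ambient polyhedron intersected in (otherwise the left side is a $y$-cylinder and cannot equal $P\cap(\cdot)$). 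That convention is indeed how the lemma is used later in the paper (e.g.\ in the proof of the main theorem and in the computation $Q_{T,\Pi_Q^-}=Q\cap\qc_T$), even though the bare formula in Section~1.2 omits the intersection; it would be cleaner to state this convention at the start of the proof rather than invoking it only at the final step.
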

Now we are ready to prove the main result of this paper:
\begin{proof-main theorem}{\rm 
Let $\Pi$ be defined as in~\eqref{eq:Pi}. By Lemma~\ref{LE:general-2}, we know that $P_{S}=P_{S_0}\cap P_{S,\Pi}$. Since $\conv(S_0)$ is a rational cylinder, Theorem~\ref{thm:cylinder} implies that $P_{S_0}$ is a rational polyhedron. So, it is sufficient to show that $P_{S,\Pi}$ is a rational polyhedron. Since $\mathcal{P}+\mathcal{R}=\conv(S)\cap\mathcal{L}^\perp$, there exists a unimodular transformation $\tau$ such that $\tau(\mathcal{L})=\left\{\bf 0\right\}\times\mathbb{R}^{n_2}$ and $\tau(\mathcal{P}+\mathcal{R})\subseteq\mathbb{R}^{n_1}\times\left\{\bf 0\right\}$. Let $Q=\tau(P)$ and $T=\tau(S)$. By Lemma~\ref{LE:unimodular-closure},
\[
\tau(P_{S,\Pi})=Q_{T,\Omega}\quad\text{where}~~\Omega:=\{(\alpha,\beta)\in \Pi_Q:\;\alpha\ell=0~\text{ for }~\ell\in \tau(\mathcal{L})\}.
\]
As $\tau(\mathcal{L})=\left\{\bf 0\right\}\times\mathbb{R}^{n_2}$, we have $\Omega=\left\{(\alpha,\beta)\in\Pi_Q: \alpha_{n_1+1}=\cdots=\alpha_{n_1+n_2}=0\right\}$. Moreover, $T$ can be written as $T=T_C\times \mathbb{Z}^{n_2}$ where $\conv(T_C)\subseteq\mathbb{R}^{n_1}$ is a pointed polyhedron and $T_C=\conv(T_C)\cap\mathbb{Z}^{n_1}$. Let $$\Phi=\left\{(\phi,\beta)\in\mathbb{R}^{n_1}\times\mathbb{R}:\;(\phi,\mathbf{0})=\alpha,~(\alpha,\beta)\in \Omega\right\}.$$ Let $\hat Q$ denote the projection of $Q$ onto the $\mathbb{R}^{n_1}$-space. As $Q\subseteq \conv(T)$, we have $\hat Q\subseteq \conv(T_C)$ and $\Phi=\Pi_{\hat Q}$. Since $\conv(T_C)$ is pointed, we know from Theorem~\ref{thm:pointed} that $\hat Q_{T_C,\Phi}$ is a rational polyhedron. Since $Q_{T,\Omega}= Q\cap(\hat Q_{T_C,\Phi}\times\mathbb{R}^{n_2})$ by Lemma~\ref{LE:projection}, it follows that $Q_{T,\Omega}$ is a rational polyhedron, which implies that $P_{S,\Pi}$ is a rational polyhedron. Therefore, $P_S$ is a rational polyhedron, as required.
	\ifx\flagJournal\true \qed \fi}
\end{proof-main theorem}
	
\section{Mixed-integer setting}\label{sec:mixed}

The last setting of this paper is the mixed-integer case. Namely, $S\subseteq \Z^n\times\R^l$ is a mixed-integer set given by
\begin{equation}\label{eq:Smixed}
S=\left\{(x,y)\in \Z^n\times\R^l:\ Ax + Cy \leq b\right\}
\end{equation}
where $A,C,b$ are matrices of appropriate dimension with integer entries. Although $S$ is not a pure-integer set as before, the definition of \scgs and \scgc can be extended to this mixed-integer setting. Let $P\subseteq \R^n\times\R^l$ be a rational polyhedron. Now take a valid inequality for $P$ that has the form $\alpha x\leq \beta$ with $\alpha\in\Z^n$. Note that $\alpha x\leq \beta$ involves integer variables only and none of the continuous variables. With a slight abuse of notation, we define $\floor{\beta}_{S,\alpha}$ for the mixed-integer set $S$ as
\begin{equation}\label{eq:floor-mixed}
\floor{\beta}_{S,\alpha} := \max\{\alpha x : (x,y) \in S, \alpha x \leq \beta\}= \max\{\alpha x : x \in \proj_x(S), \alpha x \leq \beta\}.
\end{equation}
Then we can define the \scg derived from $\alpha x\leq \beta$ simply as $\alpha x\leq \floor{\beta}_{S,\alpha}$. Notice that $S$-CG cuts for a mixed-integer set $S$ are a generalization of projected Chv\'atal-Gomory cuts introduced by Bonami et al.~\cite{projected-cg}. Now that the \scgs are defined for the mixed-integer setting, we may define the $S$-CG closure accordingly. Basically, we obtain the \scgc of $P$ by applying all possible \scgs for $P$ as we did for the pure-integer case. A difference, however, is that inequalities defining $P$ in the mixed-integer case are not necessarily of the form $\alpha x\leq \beta$ that has no continuous variable. Hence, unlike in~\eqref{eq:scg} for the pure-integer case, we define the \scgc of $P$ as
\begin{equation}\label{eq:mixed}
P_S=\bigcap\limits_{\alpha\in \Z^n}\left\{(x,y)\in P:\;\alpha x\leq \floor{\max\{\alpha x :(x,y)\in P\}}_{S,\alpha}\right\}.
\end{equation}
Here, we take points from $P$ instead of the ambient space $\R^n\times\R^l$. By defining $P_S$ this way, we make sure that $P_S$ is contained in $P$. Note that when $l=0$ and thus $S$ is a pure-integer set, the definition~\eqref{eq:mixed} is consistent with~\eqref{eq:scg}. It turns out that we still have the polyhedrality result of \scgc under this definition.
\begin{theorem}\label{thm:mixed}
Let $S$ be a mixed-integer set given by~\eqref{eq:Smixed} and $P\subseteq \conv(S)$ be a rational polyhedron. Then the $S$-CG closure is a rational polyhedron.
\end{theorem}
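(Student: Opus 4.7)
My plan is to reduce the mixed-integer case to the pure-integer Theorem~\ref{main result} by projecting onto the integer coordinates. Since every $S$-CG cut has the form $\alpha x \leq \floor{\beta}_{S,\alpha}$ with $\alpha \in \Z^n$, i.e., involves only the integer variables, the projection should preserve all the relevant structure. Specifically, write $S = R \cap (\Z^n \times \R^l)$ where $R = \{(x,y) : Ax + Cy \leq b\}$, and set $T := \proj_x(R) \cap \Z^n$. A short argument shows $T = \proj_x(S)$: the inclusion $\proj_x(S) \subseteq T$ is clear, and for the reverse, any $x \in \proj_x(R) \cap \Z^n$ admits some $y$ with $(x,y) \in R$, hence $(x,y) \in S$. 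Since $\proj_x(R)$ is a rational polyhedron, $T$ is exactly the set of integer points in a rational polyhedron, which is the setting of Theorem~\ref{main result}.

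Next I would define $Q := \proj_x(P)$, which is a rational polyhedron in $\R^n$, and verify $Q \subseteq \conv(T)$. This follows because projection commutes with convex hull for polyhedra: $Q = \proj_x(P) \subseteq \proj_x(\conv(S)) = \conv(\proj_x(S)) = \conv(T)$. Then for every $\alpha \in \Z^n$, the definition~\eqref{eq:floor-mixed} gives
\[
\floor{\beta}_{S,\alpha} = \max\{\alpha x : x \in T,\ \alpha x \leq \beta\} = \floor{\beta}_{T,\alpha},
\]
and moreover $\max\{\alpha x : (x,y) \in P\} = \max\{\alpha x : x \in Q\}$. Thus each $S$-CG cut for $P$ is exactly a $T$-CG cut for $Q$, lifted trivially in the continuous coordinates.

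Combining these observations, the definition~\eqref{eq:mixed} rewrites as
\[
P_S \;=\; P \,\cap\, \bigcap_{\alpha\in\Z^n}\Big(\{x \in \R^n : \alpha x \leq \floor{\beta(\alpha)}_{T,\alpha}\} \times \R^l\Big) \;=\; P \,\cap\, \big(Q_T \times \R^l\big),
\]
where $\beta(\alpha) = \max\{\alpha x : x \in Q\}$ and $Q_T$ is the $T$-CG closure of $Q$ in the sense of~\eqref{eq:scg}. (For any $\alpha$ with $\beta(\alpha) = +\infty$, the cut is vacuous and can be omitted.) By Theorem~\ref{main result} applied to $T$ and $Q$, the set $Q_T$ is a rational polyhedron. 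Since $P$ is a rational polyhedron and the intersection of two rational polyhedra is rational polyhedral, $P_S$ is a rational polyhedron, completing the proof.

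I do not foresee a main obstacle; the substantive work is already done by Theorem~\ref{main result}. The only care needed is the clean identification of $T$ as the integer points in a rational polyhedron and the elementary fact that projection commutes with convex hull to secure $Q \subseteq \conv(T)$; both are standard and invoke only Meyer's theorem plus the structural facts recalled in Section~\ref{sec:prelim}.
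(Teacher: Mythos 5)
Your proof is correct and takes essentially the same route as the paper: project onto the integer coordinates, identify $T = \proj_x(S)$ as the integer points of a rational polyhedron, observe that every $S$-CG cut for $P$ corresponds to a $T$-CG cut for $Q = \proj_x(P)$, rewrite $P_S = P \cap (Q_T \times \R^l)$, and invoke Theorem~\ref{main result}. You are somewhat more explicit than the paper in justifying $Q \subseteq \conv(T)$ via the commutation of projection with convex hull, but the argument is the same.
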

\begin{proof}
By definition, every \scg involves no continuous variable. The idea is basically to project out the continuous variables and then use Theorem~\ref{main result}. We claim that for each $\alpha\in\Z^n$,
\begin{equation}\label{eq:claim}
\floor{\max\{\alpha x :\ (x,y)\in P\}}_{S,\alpha}= \floor{\max\{\alpha x :\ x\in \proj_x(P)\}}_{\proj_x(S),\alpha}
\end{equation}
where $\floor{\cdot}_{\proj_x(S),\alpha}$ is the floor function defined for the pure-integer case as $\proj_x(S)\subseteq\Z^n$. In fact, by~\eqref{eq:floor-mixed}, $\floor{\max\{\alpha x :\ (x,y)\in P\}}_{S,\alpha}=\floor{\max\{\alpha x :\ (x,y)\in P\}}_{\proj_x(S),\alpha}$
and~\eqref{eq:claim} follows because $\max\{\alpha x : (x,y)\in P\}=\max\{\alpha x : x\in \proj_x(P)\}$. For ease of notation, let $Q$ and $T$ denote $\proj_x(P)$ and $\proj_x(S)$, respectively. Then there exists a rational polyhedron $R$ such that $T= R\cap\Z^n$. Moreover, $Q\subseteq \conv(T)$ as $P\subseteq \conv(S)$ and $Q$ is a rational polyhedron. Now we are ready to complete the proof. Note that
\begin{align}
P_S&= P\cap \bigcap\limits_{\alpha\in \Z^n}\left\{(x,y)\in \R^n\times \R^l:\;\alpha x\leq \floor{\max\{\alpha x : (x,y)\in P\}}_{S,\alpha}\right\}\notag\\
&=P\cap \bigcap\limits_{\alpha\in \Z^n}\left\{(x,y)\in \R^n\times \R^l:\;\alpha x\leq \floor{\max\{\alpha x : x\in Q\}}_{T,\alpha}\right\}\label{5-1}\\
&=P\cap \bigg(\bigcap\limits_{\alpha\in \Z^n}\left\{x\in \R^n:\;\alpha x\leq \floor{\max\{\alpha x : x\in Q\}}_{T,\alpha}\right\}\times \R^l\bigg)\label{5-2}\\
&=P\cap \left(Q_T\times \R^l\right)\label{5-3}
\end{align}
where~\eqref{5-1} is from~\eqref{eq:claim},~\eqref{5-2} follows because none of the variables $y$ appears in the inequalities applied, and~\eqref{5-3} is from the definition of the $T$-CG closure of $Q$. By Theorem~\ref{main result}, we know that $Q_T$ can be described by a finitely many inequalities, implying in turn that $P_S$ is a rational polyhedron.
\ifx\flagJournal\true \qed \fi
\end{proof}	
\section{Conclusion}\label{sec:concl}

The main result in this paper, namely Theorem~\ref{main result}, is very general as the polyhedrality results of Schrijver~\cite{S1980}, Dunkel and Schulz \cite{DS2012}, and our previous paper \cite{Dash19} all follow from Theorem~\ref{main result}. In the same way that split cuts, lattice-free cuts~\cite{int4}, and $t$-branch split cuts \cite{int13,max-facet-width} generalize CG cuts, one can generalize $S$-CG cuts using $S$-splits (suggested in \cite{Dash19}), $S$-free convex sets \cite{int11}, and unions of $S$-splits, respectively. We note that intersection cuts from $S$-free convex sets were studied in \cite{int11} for infinite group relaxations of MIPs, where $S$ is defined as in our paper. It would be interesting to see if polyhedrality results for $S$-CG cuts can be extended to such families of cutting planes. Furthermore, it was observed in \cite{Dash19} that even testing the validity of an $S$-CG cut for the relatively simple case $S = \Z^n_+$ is already NP-hard. On the other hand, if $S$ is chosen to be a mixed-integer set with a bounded number of integral components, then given an inequality $\alpha x \leq \beta$, computing $\floor{\beta}_{S,\alpha}$ defined as in~\eqref{eq:floor-mixed} amounts to solving a mixed-integer program with a constant number of integer variables. In that case, $\floor{\beta}_{S,\alpha}$ can be computed in polynomial time in the encoding size of $S$, $\alpha$, and $\beta$.	

\paragraph{Acknowledgements}

We would like to thank two anonymous referees for their valuable feedback on this paper, and we also thank two anonymous referees for the IPCO version~\cite{Dash20} of this paper. This research was supported, in part, by the Institute for Basic Science (IBS-R029-C1, IBS-R029-Y2).

\bibliographystyle{abbrvnat}
\bibliography{scg}

\end{document}